\newtheorem{theorem}{Theorem}
\newtheorem{axiom}[theorem]{Axiom}
\newtheorem{conjecture}[theorem]{Conjecture}
\newtheorem{corollary}[theorem]{Corollary}
\newtheorem{definition}[theorem]{Definition}
\newtheorem{example}[theorem]{Example}
\newtheorem{exercise}[theorem]{Exercise}
\newtheorem{lemma}[theorem]{Lemma}
\newtheorem{proposition}[theorem]{Proposition}
\newtheorem{remark}[theorem]{Remark}
\chardef\@x10\chardef\@xv60
\def\tcitime{
\def\@time{%
  \@minute\time\@hour\@minute\divide\@hour\@xv
  \ifnum\@hour<\@x 0\fi\the\@hour:%
  \multiply\@hour\@xv\advance\@minute-\@hour
  \ifnum\@minute<\@x 0\fi\the\@minute
  }}%
\def\QCTOpt[#1]#2{%
  \def\QCTOptB{#1}
  \def\QCTOptA{#2}
}
\def\QCTNOpt#1{%
  \def\QCTOptA{#1}
  \let\QCTOptB\empty
}
\def\Qct{%
  \@ifnextchar[{%
    \QCTOpt}{\QCTNOpt}
}
\def\QCBOpt[#1]#2{%
  \def\QCBOptB{#1}
  \def\QCBOptA{#2}
}
\def\QCBNOpt#1{%
  \def\QCBOptA{#1}
  \let\QCBOptB\empty
}
\def\Qcb{%
  \@ifnextchar[{%
    \QCBOpt}{\QCBNOpt}
}
\def\PrepCapArgs{%
  \ifx\QCBOptA\empty
    \ifx\QCTOptA\empty
      {}%
    \else
      \ifx\QCTOptB\empty
        {\QCTOptA}%
      \else
        [\QCTOptB]{\QCTOptA}%
      \fi
    \fi
  \else
    \ifx\QCBOptA\empty
      {}%
    \else
      \ifx\QCBOptB\empty
        {\QCBOptA}%
      \else
        [\QCBOptB]{\QCBOptA}%
      \fi
    \fi
  \fi
}
\def\GRAPHICSPS#1{%
 \ifcase\GRAPHICSTYPE
   \special{ps: #1}%
 \or
   \special{language "PS", include "#1"}%
 \fi
}%
\def\graffile#1#2#3#4{%
    \bgroup
    \leavevmode
    \@ifundefined{bbl@deactivate}{\def~{\string~}}{\activesoff}
    \raise -#4 \BOXTHEFRAME{%
        \hbox to #2{\raise #3\hbox to #2{\null #1\hfil}}}%
    \egroup
}%
\def\draftbox#1#2#3#4{%
 \leavevmode\raise -#4 \hbox{%
  \frame{\rlap{\protect\tiny #1}\hbox to #2%
   {\vrule height#3 width\z@ depth\z@\hfil}%
  }%
 }%
}%
\newif\ifwasdraft
\def\GRAPHIC#1#2#3#4#5{%
 \ifnum\draft=\@ne\draftbox{#2}{#3}{#4}{#5}%
  \else\graffile{#1}{#3}{#4}{#5}%
  \fi
 }%
\def\addtoLaTeXparams#1{%
    \edef\LaTeXparams{\LaTeXparams #1}}%
\newif\ifBoxFrame \BoxFramefalse
\newif\ifOverFrame \OverFramefalse
\newif\ifUnderFrame \UnderFramefalse
\def\BOXTHEFRAME#1{%
   \hbox{%
      \ifBoxFrame
         \frame{#1}%
      \else
         {#1}%
      \fi
   }%
}
\def\doFRAMEparams#1{\BoxFramefalse\OverFramefalse\UnderFramefalse\readFRAMEparams#1\end}%
\def\readFRAMEparams#1{%
 \ifx#1\end%
  \let\next=\relax
  \else
  \ifx#1i\dispkind=\z@\fi
  \ifx#1d\dispkind=\@ne\fi
  \ifx#1f\dispkind=\tw@\fi
  \ifx#1t\addtoLaTeXparams{t}\fi
  \ifx#1b\addtoLaTeXparams{b}\fi
  \ifx#1p\addtoLaTeXparams{p}\fi
  \ifx#1h\addtoLaTeXparams{h}\fi
  \ifx#1X\BoxFrametrue\fi
  \ifx#1O\OverFrametrue\fi
  \ifx#1U\UnderFrametrue\fi
  \ifx#1w
    \ifnum\draft=1\wasdrafttrue\else\wasdraftfalse\fi
    \draft=\@ne
  \fi
  \let\next=\readFRAMEparams
  \fi
 \next
 }%
\def\IFRAME#1#2#3#4#5#6{%
      \bgroup
      \let\QCTOptA\empty
      \let\QCTOptB\empty
      \let\QCBOptA\empty
      \let\QCBOptB\empty
      #6%
      \parindent=0pt%
      \leftskip=0pt
      \rightskip=0pt
      \setbox0 = \hbox{\QCBOptA}%
      \@tempdima = #1\relax
      \ifOverFrame
          \typeout{This is not implemented yet}%
          \show\HELP
      \else
         \ifdim\wd0>\@tempdima
            \advance\@tempdima by \@tempdima
            \ifdim\wd0 >\@tempdima
               \textwidth=\@tempdima
               \setbox1 =\vbox{%
                  \noindent\hbox to \@tempdima{\hfill\GRAPHIC{#5}{#4}{#1}{#2}{#3}\hfill}\\%
                  \noindent\hbox to \@tempdima{\parbox[b]{\@tempdima}{\QCBOptA}}%
               }%
               \wd1=\@tempdima
            \else
               \textwidth=\wd0
               \setbox1 =\vbox{%
                 \noindent\hbox to \wd0{\hfill\GRAPHIC{#5}{#4}{#1}{#2}{#3}\hfill}\\%
                 \noindent\hbox{\QCBOptA}%
               }%
               \wd1=\wd0
            \fi
         \else
            \ifdim\wd0>0pt
              \hsize=\@tempdima
              \setbox1 =\vbox{%
                \unskip\GRAPHIC{#5}{#4}{#1}{#2}{0pt}%
                \break
                \unskip\hbox to \@tempdima{\hfill \QCBOptA\hfill}%
              }%
              \wd1=\@tempdima
           \else
              \hsize=\@tempdima
              \setbox1 =\vbox{%
                \unskip\GRAPHIC{#5}{#4}{#1}{#2}{0pt}%
              }%
              \wd1=\@tempdima
           \fi
         \fi
         \@tempdimb=\ht1
         \advance\@tempdimb by \dp1
         \advance\@tempdimb by -#2%
         \advance\@tempdimb by #3%
         \leavevmode
         \raise -\@tempdimb \hbox{\box1}%
      \fi
      \egroup%
}%
\def\DFRAME#1#2#3#4#5{%
 \begin{center}
     \let\QCTOptA\empty
     \let\QCTOptB\empty
     \let\QCBOptA\empty
     \let\QCBOptB\empty
     \ifOverFrame 
        #5\QCTOptA\par
     \fi
     \GRAPHIC{#4}{#3}{#1}{#2}{\z@}
     \ifUnderFrame 
        \nobreak\par\nobreak#5\QCBOptA
     \fi
 \end{center}%
 }%
\def\FFRAME#1#2#3#4#5#6#7{%
 \begin{figure}[#1]%
  \let\QCTOptA\empty
  \let\QCTOptB\empty
  \let\QCBOptA\empty
  \let\QCBOptB\empty
  \ifOverFrame
    #4
    \ifx\QCTOptA\empty
    \else
      \ifx\QCTOptB\empty
        \caption{\QCTOptA}%
      \else
        \caption[\QCTOptB]{\QCTOptA}%
      \fi
    \fi
    \ifUnderFrame\else
      \label{#5}%
    \fi
  \else
    \UnderFrametrue%
  \fi
  \begin{center}\GRAPHIC{#7}{#6}{#2}{#3}{\z@}\end{center}%
  \ifUnderFrame
    #4
    \ifx\QCBOptA\empty
      \caption{}%
    \else
      \ifx\QCBOptB\empty
        \caption{\QCBOptA}%
      \else
        \caption[\QCBOptB]{\QCBOptA}%
      \fi
    \fi
    \label{#5}%
  \fi
  \end{figure}%
 }%
\def\makeactives{
  \catcode`\"=\active
  \catcode`\;=\active
  \catcode`\:=\active
  \catcode`\'=\active
  \catcode`\~=\active
}
   \gdef\activesoff{%
      \def"{\string"}
      \def;{\string;}
      \def:{\string:}
      \def'{\string'}
      \def~{\string~}
    }
\def\FRAME#1#2#3#4#5#6#7#8{%
 \bgroup
 \ifnum\draft=\@ne
   \wasdrafttrue
 \else
   \wasdraftfalse%
 \fi
 \def\LaTeXparams{}%
 \dispkind=\z@
 \def\LaTeXparams{}%
 \doFRAMEparams{#1}%
 \ifnum\dispkind=\z@\IFRAME{#2}{#3}{#4}{#7}{#8}{#5}\else
  \ifnum\dispkind=\@ne\DFRAME{#2}{#3}{#7}{#8}{#5}\else
   \ifnum\dispkind=\tw@
    \edef\@tempa{\noexpand\FFRAME{\LaTeXparams}}%
    \@tempa{#2}{#3}{#5}{#6}{#7}{#8}%
    \fi
   \fi
  \fi
  \ifwasdraft\draft=1\else\draft=0\fi{}%
  \egroup
 }%
\def\TEXUX#1{"texux"}
\long\def\QQQ#1#2{%
     \long\expandafter\def\csname#1\endcsname{#2}}%
\long\def\QQA#1#2{}%
\def\QTR#1#2{{\csname#1\endcsname #2}}
\def\EXPAND#1[#2]#3{}%
\def\NOEXPAND#1[#2]#3{}%
\def\LaTeXparent#1{}%
\def\ChildStyles#1{}%
\def\ChildDefaults#1{}%
\def\QTagDef#1#2#3{}%
  \providecommand{\UNICODE}[2][]{}
\def\QQfnmark#1{\footnotemark}
 \def\abstract{%
  \if@twocolumn
   \section*{Abstract (Not appropriate in this style!)}%
   \else \small 
   \begin{center}{\bf Abstract\vspace{-.5em}\vspace{\z@}}\end{center}%
   \quotation 
   \fi
  }%
   \def\registered{\relax\ifmmode{}\r@gistered
                    \else$\m@th\r@gistered$\fi}%
 \def\r@gistered{^{\ooalign
  {\hfil\raise.07ex\hbox{$\scriptstyle\rm\text{R}$}\hfil\crcr
  \mathhexbox20D}}}}{}%
\newdimen\theight
\def\Column{%
 \vadjust{\setbox\z@=\hbox{\scriptsize\quad\quad tcol}%
  \theight=\ht\z@\advance\theight by \dp\z@\advance\theight by \lineskip
  \kern -\theight \vbox to \theight{%
   \rightline{\rlap{\box\z@}}%
   \vss
   }%
  }%
 }%
\def\qed{%
 \ifhmode\unskip\nobreak\fi\ifmmode\ifinner\else\hskip5\p@\fi\fi
 \hbox{\hskip5\p@\vrule width4\p@ height6\p@ depth1.5\p@\hskip\p@}%
 }%
\def\miss{\hbox{\vrule height2\p@ width 2\p@ depth\z@}}%
\def\tcol#1{{\baselineskip=6\p@ \vcenter{#1}} \Column}  %
\def\newfmtname{LaTeX2e}
  \DeclareOldFontCommand{\rm}{\normalfont\rmfamily}{\mathrm}
  \DeclareOldFontCommand{\sf}{\normalfont\sffamily}{\mathsf}
  \DeclareOldFontCommand{\tt}{\normalfont\ttfamily}{\mathtt}
  \DeclareOldFontCommand{\bf}{\normalfont\bfseries}{\mathbf}
  \DeclareOldFontCommand{\it}{\normalfont\itshape}{\mathit}
  \DeclareOldFontCommand{\sl}{\normalfont\slshape}{\@nomath\sl}
  \DeclareOldFontCommand{\sc}{\normalfont\scshape}{\@nomath\sc}
\def\alpha{{\Greekmath 010B}}%
\def\beta{{\Greekmath 010C}}%
\def\gamma{{\Greekmath 010D}}%
\def\delta{{\Greekmath 010E}}%
\def\epsilon{{\Greekmath 010F}}%
\def\zeta{{\Greekmath 0110}}%
\def\eta{{\Greekmath 0111}}%
\def\theta{{\Greekmath 0112}}%
\def\iota{{\Greekmath 0113}}%
\def\kappa{{\Greekmath 0114}}%
\def\lambda{{\Greekmath 0115}}%
\def\mu{{\Greekmath 0116}}%
\def\nu{{\Greekmath 0117}}%
\def\xi{{\Greekmath 0118}}%
\def\pi{{\Greekmath 0119}}%
\def\rho{{\Greekmath 011A}}%
\def\sigma{{\Greekmath 011B}}%
\def\tau{{\Greekmath 011C}}%
\def\upsilon{{\Greekmath 011D}}%
\def\phi{{\Greekmath 011E}}%
\def\chi{{\Greekmath 011F}}%
\def\psi{{\Greekmath 0120}}%
\def\omega{{\Greekmath 0121}}%
\def\varepsilon{{\Greekmath 0122}}%
\def\vartheta{{\Greekmath 0123}}%
\def\varpi{{\Greekmath 0124}}%
\def\varrho{{\Greekmath 0125}}%
\def\varsigma{{\Greekmath 0126}}%
\def\varphi{{\Greekmath 0127}}%
\def\nabla{{\Greekmath 0272}}
\def\FindBoldGroup{%
   {\setbox0=\hbox{$\mathbf{x\global\edef\theboldgroup{\the\mathgroup}}$}}%
}
\def\Greekmath#1#2#3#4{%
    \if@compatibility
        \ifnum\mathgroup=\symbold
           \mathchoice{\mbox{\boldmath$\displaystyle\mathchar"#1#2#3#4$}}%
                      {\mbox{\boldmath$\textstyle\mathchar"#1#2#3#4$}}%
                      {\mbox{\boldmath$\scriptstyle\mathchar"#1#2#3#4$}}%
                      {\mbox{\boldmath$\scriptscriptstyle\mathchar"#1#2#3#4$}}%
        \else
           \mathchar"#1#2#3#4%
        \fi 
    \else 
        \FindBoldGroup
        \ifnum\mathgroup=\theboldgroup 
           \mathchoice{\mbox{\boldmath$\displaystyle\mathchar"#1#2#3#4$}}%
                      {\mbox{\boldmath$\textstyle\mathchar"#1#2#3#4$}}%
                      {\mbox{\boldmath$\scriptstyle\mathchar"#1#2#3#4$}}%
                      {\mbox{\boldmath$\scriptscriptstyle\mathchar"#1#2#3#4$}}%
        \else
           \mathchar"#1#2#3#4%
        \fi     	    
	  \fi}
\newif\ifGreekBold  \GreekBoldfalse
\let\SAVEPBF=\pbf
\def\pbf{\GreekBoldtrue\SAVEPBF}%
  \newcounter{equationnumber}  
  \def\mathletters{%
     \addtocounter{equation}{1}
     \edef\@currentlabel{\theequation}%
     \setcounter{equationnumber}{\c@equation}
     \setcounter{equation}{0}%
     \edef\theequation{\@currentlabel\noexpand\alph{equation}}%
  }
    \def\BibTeX{{\rm B\kern-.05em{\sc i\kern-.025em b}\kern-.08em
                 T\kern-.1667em\lower.7ex\hbox{E}\kern-.125emX}}}{}%
\def\AmS{{\protect\usefont{OMS}{cmsy}{m}{n}%
                A\kern-.1667em\lower.5ex\hbox{M}\kern-.125emS}}}{}%
\def\@@eqncr{\let\@tempa\relax
    \ifcase\@eqcnt \def\@tempa{& & &}\or \def\@tempa{& &}%
      \else \def\@tempa{&}\fi
     \@tempa
     \if@eqnsw
        \iftag@
           \@taggnum
        \else
           \@eqnnum\stepcounter{equation}%
        \fi
     \fi
     \global\tag@false
     \global\@eqnswtrue
     \global\@eqcnt\z@\cr}
\def\TCItag{\@ifnextchar*{\@TCItagstar}{\@TCItag}}
\def\@TCItag#1{%
    \global\tag@true
    \global\def\@taggnum{(#1)}}
\def\@TCItagstar*#1{%
    \global\tag@true
    \global\def\@taggnum{#1}}
\let\DOTSI\relax
\def\RIfM@{\relax\ifmmode}%
\def\FN@{\futurelet\next}%
\def\iint{\DOTSI\intno@\tw@\FN@\ints@}%
\def\iiint{\DOTSI\intno@\thr@@\FN@\ints@}%
\def\iiiint{\DOTSI\intno@4 \FN@\ints@}%
\def\idotsint{\DOTSI\intno@\z@\FN@\ints@}%
\def\ints@{\findlimits@\ints@@}%
\newif\iflimtoken@
\newif\iflimits@
\def\findlimits@{\limtoken@true\ifx\next\limits\limits@true
 \else\ifx\next\nolimits\limits@false\else
 \limtoken@false\ifx\ilimits@\nolimits\limits@false\else
 \ifinner\limits@false\else\limits@true\fi\fi\fi\fi}%
\def\multint@{\int\ifnum\intno@=\z@\intdots@                          
 \else\intkern@\fi                                                    
 \ifnum\intno@>\tw@\int\intkern@\fi                                   
 \ifnum\intno@>\thr@@\int\intkern@\fi                                 
 \int}
\def\multintlimits@{\intop\ifnum\intno@=\z@\intdots@\else\intkern@\fi
 \ifnum\intno@>\tw@\intop\intkern@\fi
 \ifnum\intno@>\thr@@\intop\intkern@\fi\intop}%
\def\intic@{%
    \mathchoice{\hskip.5em}{\hskip.4em}{\hskip.4em}{\hskip.4em}}%
\def\negintic@{\mathchoice
 {\hskip-.5em}{\hskip-.4em}{\hskip-.4em}{\hskip-.4em}}%
\def\ints@@{\iflimtoken@                                              
 \def\ints@@@{\iflimits@\negintic@
   \mathop{\intic@\multintlimits@}\limits                             
  \else\multint@\nolimits\fi                                          
  \eat@}
 \else                                                                
 \def\ints@@@{\iflimits@\negintic@
  \mathop{\intic@\multintlimits@}\limits\else
  \multint@\nolimits\fi}\fi\ints@@@}%
\def\intkern@{\mathchoice{\!\!\!}{\!\!}{\!\!}{\!\!}}%
\def\plaincdots@{\mathinner{\cdotp\cdotp\cdotp}}%
\def\intdots@{\mathchoice{\plaincdots@}%
 {{\cdotp}\mkern1.5mu{\cdotp}\mkern1.5mu{\cdotp}}%
 {{\cdotp}\mkern1mu{\cdotp}\mkern1mu{\cdotp}}%
 {{\cdotp}\mkern1mu{\cdotp}\mkern1mu{\cdotp}}}%
\def\RIfM@{\relax\protect\ifmmode}
\def\text{\RIfM@\expandafter\text@\else\expandafter\mbox\fi}
\let\nfss@text\text
\def\text@#1{\mathchoice
   {\textdef@\displaystyle\f@size{#1}}%
   {\textdef@\textstyle\tf@size{\firstchoice@false #1}}%
   {\textdef@\textstyle\sf@size{\firstchoice@false #1}}%
   {\textdef@\textstyle \ssf@size{\firstchoice@false #1}}%
   \glb@settings}
\def\textdef@#1#2#3{\hbox{{%
                    \everymath{#1}%
                    \let\f@size#2\selectfont
                    #3}}}
\newif\iffirstchoice@
\def\Let@{\relax\iffalse{\fi\let\\=\cr\iffalse}\fi}%
\def\vspace@{\def\vspace##1{\crcr\noalign{\vskip##1\relax}}}%
\def\multilimits@{\bgroup\vspace@\Let@
 \baselineskip\fontdimen10 \scriptfont\tw@
 \advance\baselineskip\fontdimen12 \scriptfont\tw@
 \lineskip\thr@@\fontdimen8 \scriptfont\thr@@
 \lineskiplimit\lineskip
 \vbox\bgroup\ialign\bgroup\hfil$\m@th\scriptstyle{##}$\hfil\crcr}%
\def\Sb{_\multilimits@}%
\def\endSb{\crcr\egroup\egroup\egroup}%
\def\Sp{^\multilimits@}%
\newdimen\ex@
\def\rightarrowfill@#1{$#1\m@th\mathord-\mkern-6mu\cleaders
 \hbox{$#1\mkern-2mu\mathord-\mkern-2mu$}\hfill
 \mkern-6mu\mathord\rightarrow$}%
\def\leftarrowfill@#1{$#1\m@th\mathord\leftarrow\mkern-6mu\cleaders
 \hbox{$#1\mkern-2mu\mathord-\mkern-2mu$}\hfill\mkern-6mu\mathord-$}%
\def\leftrightarrowfill@#1{$#1\m@th\mathord\leftarrow
\mkern-6mu\cleaders
 \hbox{$#1\mkern-2mu\mathord-\mkern-2mu$}\hfill
 \mkern-6mu\mathord\rightarrow$}%
\def\overrightarrow{\mathpalette\overrightarrow@}%
\def\overrightarrow@#1#2{\vbox{\ialign{##\crcr\rightarrowfill@#1\crcr
 \noalign{\kern-\ex@\nointerlineskip}$\m@th\hfil#1#2\hfil$\crcr}}}%
\def\overleftarrow{\mathpalette\overleftarrow@}%
\def\overleftarrow@#1#2{\vbox{\ialign{##\crcr\leftarrowfill@#1\crcr
 \noalign{\kern-\ex@\nointerlineskip}$\m@th\hfil#1#2\hfil$\crcr}}}%
\def\overleftrightarrow{\mathpalette\overleftrightarrow@}%
\def\overleftrightarrow@#1#2{\vbox{\ialign{##\crcr
   \leftrightarrowfill@#1\crcr
 \noalign{\kern-\ex@\nointerlineskip}$\m@th\hfil#1#2\hfil$\crcr}}}%
\def\underrightarrow{\mathpalette\underrightarrow@}%
\def\underrightarrow@#1#2{\vtop{\ialign{##\crcr$\m@th\hfil#1#2\hfil
  $\crcr\noalign{\nointerlineskip}\rightarrowfill@#1\crcr}}}%
\def\underleftarrow{\mathpalette\underleftarrow@}%
\def\underleftarrow@#1#2{\vtop{\ialign{##\crcr$\m@th\hfil#1#2\hfil
  $\crcr\noalign{\nointerlineskip}\leftarrowfill@#1\crcr}}}%
\def\underleftrightarrow{\mathpalette\underleftrightarrow@}%
\def\underleftrightarrow@#1#2{\vtop{\ialign{##\crcr$\m@th
  \hfil#1#2\hfil$\crcr
 \noalign{\nointerlineskip}\leftrightarrowfill@#1\crcr}}}%
\def\qopnamewl@#1{\mathop{\operator@font#1}\nlimits@}
\let\nlimits@\displaylimits
\def\setboxz@h{\setbox\z@\hbox}
\def\varlim@#1#2{\mathop{\vtop{\ialign{##\crcr
 \hfil$#1\m@th\operator@font lim$\hfil\crcr
 \noalign{\nointerlineskip}#2#1\crcr
 \noalign{\nointerlineskip\kern-\ex@}\crcr}}}}
 \def\rightarrowfill@#1{\m@th\setboxz@h{$#1-$}\ht\z@\z@
  $#1\copy\z@\mkern-6mu\cleaders
  \hbox{$#1\mkern-2mu\box\z@\mkern-2mu$}\hfill
  \mkern-6mu\mathord\rightarrow$}
\def\leftarrowfill@#1{\m@th\setboxz@h{$#1-$}\ht\z@\z@
  $#1\mathord\leftarrow\mkern-6mu\cleaders
  \hbox{$#1\mkern-2mu\copy\z@\mkern-2mu$}\hfill
  \mkern-6mu\box\z@$}
\def\projlim{\qopnamewl@{proj\,lim}}
\def\injlim{\qopnamewl@{inj\,lim}}
\def\varinjlim{\mathpalette\varlim@\rightarrowfill@}
\def\varprojlim{\mathpalette\varlim@\leftarrowfill@}
\def\varliminf{\mathpalette\varliminf@{}}
\def\varliminf@#1{\mathop{\underline{\vrule\@depth.2\ex@\@width\z@
   \hbox{$#1\m@th\operator@font lim$}}}}
\def\varlimsup{\mathpalette\varlimsup@{}}
\def\varlimsup@#1{\mathop{\overline
  {\hbox{$#1\m@th\operator@font lim$}}}}
\def\align{\@verbatim \frenchspacing\@vobeyspaces \@alignverbatim
You are using the "align" environment in a style in which it is not defined.}
\let\csname endalign*\endcsname =\endtrivlist
\def\alignat{\@verbatim \frenchspacing\@vobeyspaces \@alignatverbatim
You are using the "alignat" environment in a style in which it is not defined.}
\let\csname endalignat*\endcsname =\endtrivlist
\def\xalignat{\@verbatim \frenchspacing\@vobeyspaces \@xalignatverbatim
You are using the "xalignat" environment in a style in which it is not defined.}
\let\csname endxalignat*\endcsname =\endtrivlist
\def\gather{\@verbatim \frenchspacing\@vobeyspaces \@gatherverbatim
You are using the "gather" environment in a style in which it is not defined.}
\let\csname endgather*\endcsname =\endtrivlist
\def\multiline{\@verbatim \frenchspacing\@vobeyspaces \@multilineverbatim
You are using the "multiline" environment in a style in which it is not defined.}
\let\csname endmultiline*\endcsname =\endtrivlist
\def\arrax{\@verbatim \frenchspacing\@vobeyspaces \@arraxverbatim
You are using a type of "array" construct that is only allowed in AmS-LaTeX.}
\def\tabulax{\@verbatim \frenchspacing\@vobeyspaces \@tabulaxverbatim
You are using a type of "tabular" construct that is only allowed in AmS-LaTeX.}
\let\csname endarrax*\endcsname =\endtrivlist
\let\csname endtabulax*\endcsname =\endtrivlist
 \def\endequation{%
     \ifmmode\ifinner 
      \iftag@
        \addtocounter{equation}{-1} 
        $\hfil
           \displaywidth\linewidth\@taggnum\egroup \endtrivlist
        \global\tag@false
        \global\@ignoretrue   
      \else
        $\hfil
           \displaywidth\linewidth\@eqnnum\egroup \endtrivlist
        \global\tag@false
        \global\@ignoretrue 
      \fi
     \else   
      \iftag@
        \addtocounter{equation}{-1} 
        \eqno \hbox{\@taggnum}
        \global\tag@false%
        $$\global\@ignoretrue
      \else
        \eqno \hbox{\@eqnnum}
        $$\global\@ignoretrue
      \fi
     \fi\fi
 } 
 \newif\iftag@ \tag@false
 \def\TCItag{\@ifnextchar*{\@TCItagstar}{\@TCItag}}
 \def\@TCItag#1{%
     \global\tag@true
     \global\def\@taggnum{(#1)}}
 \def\@TCItagstar*#1{%
     \global\tag@true
     \global\def\@taggnum{#1}}
     \def\tag{\@ifnextchar*{\@tagstar}{\@tag}}
     \def\@tag#1{%
         \global\tag@true
         \global\def\@taggnum{(#1)}}
     \def\@tagstar*#1{%
         \global\tag@true
         \global\def\@taggnum{#1}}
\begin{document}

\title{An $hp$ finite element method for a singularly perturbed
reaction-convection-diffusion boundary value problem with two small
parameters\\
}
\author{I. Sykopetritou and C. Xenophontos\thanks{
Corresponding author. Email: xenophontos@ucy.ac.cy} \\
Department of Mathematics and Statistics \\
University of Cyprus \\
P.O. Box 20537 \\
1678 Nicosia \\
Cyprus}
\maketitle

\begin{abstract}
We consider a second order singularly perturbed boundary value problem, of
reaction-convection-diffusion type with two small parameters, and the
approximation of its solution by the $hp$ version of the Finite Element
Method on the so-called {\emph{Spectral Boundary Layer}} mesh. We show that
the method converges uniformly, with respect to both singular perturbation
parameters, at an exponential rate when the error is measured in the energy
norm. Numerical examples are also presented, which illustrate our
theoretical findings.
\end{abstract}

\textbf{Keywords}: singularly perturbed problem;
reaction-convection-diffusion; boundary layers; $hp$ finite element method;
robust exponential convergence

\vspace{0.5cm}

\textbf{MSC2010}: 65N30

\section{Introduction}

\label{intro}

The numerical solution of singularly perturbed problems has been studied
extensively over the last few decades (see, e.g., the books \cite{mos}, \cite%
{morton}, \cite{rst} and the references therein). As is well known, a main
difficulty in these problems is the presence of \emph{boundary layers} in
the solution, whose accurate approximation independently of the singular
perturbation parameter(s), is of great importance for the overall quality of
the approximate solution to be considered reliable. In the context of the
Finite Element Method (FEM), the robust approximation of boundary layers
requires either the use of the $h$ version on non-uniform, layer-adapted
meshes (such as the Shishkin \cite{Shishkin2} or Bakhvalov \cite{B} mesh),
or the use of the high order $p$ and $hp$ versions on the so-called \emph{%
Spectral Boundary Layer} mesh \cite{MXO, ss}.

Usually, problems of convection-diffusion or reaction-diffusion type are
studied separately and several researchers have proposed and analyzed
numerical schemes for the robust approximation of their solution (see, e.g., 
\cite{rst} and the references therein). When there are two singular
perturbation parameters present in the differential equation, the problem
becomes reaction-convection-diffusion and the relatioship between the
parameters determines the `regime' we are in (see Table 1 ahead). In \cite{L}
this problem was addressed using the $h$ version of the FEM as well as
appropriate finite differences (see also \cite{BZ}, \cite{GORP}, \cite{LR}, 
\cite{ORPS}, \cite{RU}, \cite{TR}, \cite{TR2}). In the present article we
consider the $hp$ version of the FEM on the \emph{Spectral Boundary Layer}
mesh (from \cite{MXO}) and show that the method converges uniformly in the
perturbation parameters at an exponential rate, when the error is measured
in the energy norm.

The rest of the paper is organized as follows: in Section \ref{model} we
present the model problem and its regularity. Section \ref{mesh} presents
the discretization using the \emph{Spectral Boundary Layer} mesh and
contains our main result of uniform, exponential convergence. Finally, in
Section \ref{nr} we show the results of numerical computations that
illustrate and extend our theoretical findings.

With $I\subset \mathbb{R}$ an interval with boundary $\partial I$ and
measure $\left\vert I\right\vert $, we will denote by $C^{k}(I)$ the space
of continuous functions on $I$ with continuous derivatives up to order $k$.
We will use the usual Sobolev spaces $W^{k,m}(I)$ of functions on $\Omega $
with $0,1,2,...,k$ generalized derivatives in $L^{m}\left( I\right) $,
equipped with the norm and seminorm $\left\Vert \cdot \right\Vert _{k,m,I}$
and $\left\vert \cdot \right\vert _{k,m,I}\,$, respectively. When $m=2$, we
will write $H^{k}\left( I\right) $ instead of $W^{k,2}\left( I\right) $, and
for the norm and seminorm, we will write $\left\Vert \cdot \right\Vert
_{k,I} $ and $\left\vert \cdot \right\vert _{k,I}\,$, respectively. The
usual $L^{2}(I)$ inner product will be denoted by $\left\langle \cdot ,\cdot
\right\rangle _{I}$, with the subscript ommitted when there is no confusion.
We will also use the space 
\begin{equation*}
H_{0}^{1}\left( I\right) =\left\{ u\in H^{1}\left( I\right) :\left.
u\right\vert _{\partial \Omega }=0\right\} .
\end{equation*}%
The norm of the space $L^{\infty }(I)$ of essentially bounded functions is
denoted by $\Vert \cdot \Vert _{\infty ,I}$. Finally, the notation
\textquotedblleft $a\lesssim b$\textquotedblright\ means \textquotedblleft $%
a\leq Cb$\textquotedblright\ with $C$ being a generic positive constant,
independent of any parameters (e.g. discretization, singular perturbation,
etc.).


\section{The model problem and its regularity\label{model}}

We consider the following model problem (cf. \cite{omalley}): Find $u$ such
that 
\begin{eqnarray}
-\varepsilon _{1}u^{\prime \prime }(x)+\varepsilon _{2}b(x)u^{\prime
}(x)+c(x)u(x) &=&f(x):x\in I=\left( 0,1\right) ,  \label{de} \\
u(0)=u(1) &=&0\text{ },  \label{bc}
\end{eqnarray}%
where $0<\varepsilon _{1},\varepsilon _{2}\leq 1$ are given parameters that
can approach zero and the functions $b,c,f$ are given and sufficiently
smooth. \ In particular, we assume that they are analytic functions
satisfying, for some positive constants $\gamma _{f},\gamma _{c},\gamma _{b}$%
, independent of $\varepsilon _{1},\varepsilon _{2},$ 
\begin{equation}
\left\Vert f^{(n)}\right\Vert _{\infty ,I}\lesssim n!\gamma
_{f}^{n}\;,\;\left\Vert c^{(n)}\right\Vert _{\infty ,I}\lesssim n!\gamma
_{c}^{n}\;,\;\left\Vert b^{(n)}\right\Vert _{\infty ,I}\lesssim n!\gamma
_{b}^{n}\;\forall \;n=0,1,2,...  \label{analytic}
\end{equation}%
In addition, we assume that there exist constants $\beta ,\gamma ,\rho $,
independent of $\varepsilon _{1},\varepsilon _{2},$ such that $\forall
\;x\in \overline{I}$ 
\begin{equation}
b(x)\geq \beta >0\;,\;c(x)\geq \gamma >0\;,\;c(x)-\frac{\varepsilon _{2}}{2}%
b^{\prime }(x)\geq \rho >0.  \label{data}
\end{equation}%
The following result was established in \cite{IX} and it gives a bound in
terms of classical differentiability regularity.

\begin{proposition}
\label{thm_reg0}Let $u$ be the solution of (\ref{de}), (\ref{bc}). Then,
there exists a positive constant $K$, independent of $\varepsilon
_{1},\varepsilon _{2}$ and $u$, such that for $n=0,1,2,...$%
\begin{equation*}
\left\Vert u^{(n)}\right\Vert _{\infty ,I}\lesssim K^{n}\max \left\{
n,\varepsilon _{1}^{-1},\varepsilon _{2}^{-1}\right\} ^{n}.
\end{equation*}
\end{proposition}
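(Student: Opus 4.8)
The plan is to prove the bound by strong induction on $n$, using the differential equation (\ref{de}) itself---differentiated and then solved for the top-order derivative---to propagate control from low to high order, after first disposing of the two cases $n=0$ and $n=1$ by stability estimates. For $n=0$: since $\varepsilon_1>0$ the operator in (\ref{de}) is uniformly elliptic with zeroth-order coefficient $c\ge\gamma>0$, so it obeys a maximum principle; comparing $u$ against the constant barrier $\gamma^{-1}\Vert f\Vert_{\infty,I}$ and using (\ref{bc}) gives $\Vert u\Vert_{\infty,I}\le\gamma^{-1}\Vert f\Vert_{\infty,I}\lesssim 1$, which is the claim for $n=0$. For $n=1$: multiply (\ref{de}) by $u$, integrate over $I$, integrate the convective term by parts using (\ref{bc}), and invoke $c-\frac{\varepsilon_2}{2}b'\ge\rho>0$ from (\ref{data}) to obtain the energy bounds $\Vert u\Vert_{0,I}\lesssim 1$ and $\varepsilon_1^{1/2}\Vert u'\Vert_{0,I}\lesssim 1$; solving (\ref{de}) for $u''$ and using (\ref{analytic}) then gives $\Vert u''\Vert_{0,I}\lesssim\varepsilon_1^{-1}(\varepsilon_2\varepsilon_1^{-1/2}+1)$, and the one-dimensional interpolation inequality $\Vert v\Vert_{\infty,I}^2\lesssim\Vert v\Vert_{0,I}\Vert v'\Vert_{0,I}+\Vert v\Vert_{0,I}^2$ with $v=u'$ yields, since $\varepsilon_2\le 1$, the bound $\Vert u'\Vert_{\infty,I}\lesssim\varepsilon_1^{-1}\le\max\{1,\varepsilon_1^{-1},\varepsilon_2^{-1}\}$.

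For the inductive step, fix $n\ge 2$, assume the estimate for all smaller indices, differentiate (\ref{de}) $(n-2)$ times by Leibniz' rule, and solve for $u^{(n)}$:
\[
\varepsilon_1 u^{(n)}=\varepsilon_2\sum_{k=0}^{n-2}\binom{n-2}{k}b^{(k)}u^{(n-1-k)}+\sum_{k=0}^{n-2}\binom{n-2}{k}c^{(k)}u^{(n-2-k)}-f^{(n-2)}.
\]
Taking $\Vert\cdot\Vert_{\infty,I}$, inserting the analyticity bounds (\ref{analytic}), using $\binom{n-2}{k}k!\le n^{k}$, applying the induction hypothesis, and using the monotonicity $\max\{m,\varepsilon_1^{-1},\varepsilon_2^{-1}\}\le\max\{n,\varepsilon_1^{-1},\varepsilon_2^{-1}\}=:Q$ for $m\le n$, each of the two sums is bounded by a convergent geometric series (in $\gamma_b/K$, resp. $\gamma_c/K$) times a multiple of $K^{n-1}Q^{n-1}$; the division by $\varepsilon_1$ contributes one factor $\varepsilon_1^{-1}\le Q$ and the prefactor $\varepsilon_2\le 1$ is harmless, while the $f$-term gives a multiple of $(n-2)!\,\varepsilon_1^{-1}\le Q^{n-1}$. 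Choosing $K$ large enough (depending only on $\gamma_b,\gamma_c,\gamma_f$ and the implied constants) that the geometric series converge and the recursion closes, and enlarging the implied constant to absorb the $n\in\{0,1\}$ cases, yields $\Vert u^{(n)}\Vert_{\infty,I}\lesssim K^{n}Q^{n}$, which is the assertion. (That $u$ is smooth on $\overline I$, so all these derivatives exist, follows from analytic elliptic regularity.)

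The main obstacle is the $n=1$ case: it cannot be obtained from the recursion alone---doing so would require $\varepsilon_1\gtrsim\varepsilon_2$, which fails in part of the parameter range---so a genuinely regime-independent argument is needed, and one must check that the combination of the energy estimate, the equation, and the interpolation inequality lands at a constant times $\max\{1,\varepsilon_1^{-1},\varepsilon_2^{-1}\}$ rather than at something larger. Everything else is routine bookkeeping with the binomial sums and the final choice of $K$.
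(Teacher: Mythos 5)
The paper does not actually prove Proposition \ref{thm_reg0}; it is quoted from the reference \cite{IX}, so there is no in-paper argument to compare against. Judged on its own, your proof is correct and self-contained, and it follows the natural route for such bounds: the comparison/maximum principle (valid since $c\ge\gamma>0$) gives $\Vert u\Vert_{\infty,I}\le\gamma^{-1}\Vert f\Vert_{\infty,I}$; the energy identity obtained after integrating the convective term by parts uses exactly the hypothesis $c-\tfrac{\varepsilon_2}{2}b'\ge\rho>0$ from (\ref{data}) and yields $\Vert u\Vert_{0,I}\lesssim 1$, $\varepsilon_1^{1/2}\vert u\vert_{1,I}\lesssim 1$; and the 1D interpolation inequality applied to $v=u'$ together with $\Vert u''\Vert_{0,I}\lesssim\varepsilon_1^{-1}(\varepsilon_2\varepsilon_1^{-1/2}+1)$ indeed lands at $\Vert u'\Vert_{\infty,I}\lesssim\varepsilon_1^{-1}\le\max\{1,\varepsilon_1^{-1},\varepsilon_2^{-1}\}$ because $\varepsilon_1,\varepsilon_2\le 1$. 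Your observation that $n=1$ cannot be recovered from the recursion (absorbing $\varepsilon_2\varepsilon_1^{-1}\Vert u'\Vert_{\infty,I}$ would need $\varepsilon_2\lesssim\varepsilon_1$) correctly identifies the only non-routine point. The inductive step checks out: $\binom{n-2}{k}k!\le n^k\le Q^k$ with $Q=\max\{n,\varepsilon_1^{-1},\varepsilon_2^{-1}\}$ turns both Leibniz sums into geometric series with ratios $\gamma_b/K$, $\gamma_c/K$, and the division by $\varepsilon_1$ costs one factor of $Q$, so the recursion closes for $K$ large. Two small points you should make explicit to be airtight: (i) the $f$-term carries the $n$-dependent factor $\gamma_f^{n-2}$, which must also be absorbed by taking $K\ge\gamma_f$ (your parenthetical choice of $K$ covers this, but the phrase ``a multiple of $(n-2)!\,\varepsilon_1^{-1}$'' hides it); and (ii) the constants must be fixed in the right order --- the conditions on $K$ involve only the data constants and the implied constants, not the constant $C$ furnished by the base cases $n=0,1$, so choose $K$ first and then $C\ge 1$ from the base cases; ``enlarging the implied constant'' afterwards is not available inside the induction.
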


More details arise if one studies the structure of the solution to (\ref{de}%
), which depends on the roots of the characteristic equation associated with
the differential operator. \ For this reason, we let $\lambda
_{0}(x),\lambda _{1}(x)$ be the solutions of the characteristic equation and
set%
\begin{equation}
\mu _{0}=-\underset{x\in \lbrack 0,1]}{\max }\lambda _{0}(x)\;,\;\mu _{1}=%
\underset{x\in \lbrack 0,1]}{\min }\lambda _{1}(x),  \label{mu}
\end{equation}%
or equivalently,%
\begin{equation*}
\mu _{0,1}=\underset{x\in \lbrack 0,1]}{\min }\frac{\mp \varepsilon _{2}b(x)+%
\sqrt{\varepsilon _{2}^{2}b^{2}(x)+4\varepsilon _{1}c(x)}}{2\varepsilon _{1}}%
.
\end{equation*}%
The following hold true \cite{RU,TR}:%
\begin{equation}
\left. 
\begin{array}{c}
1<<\mu _{0}\leq \mu _{1\;\;},\;\frac{\varepsilon _{2}}{\varepsilon
_{2}+\varepsilon _{1}^{1/2}}\lesssim \varepsilon _{2}\mu _{0}\lesssim
1\;,\;\varepsilon _{1}^{1/2}\mu _{0}\lesssim 1 \\ 
\max \{\mu _{0}^{-1},\varepsilon _{1}\mu _{1}\}\lesssim \varepsilon
_{1}+\varepsilon _{2}^{1/2}\;,\;\varepsilon _{2}\lesssim \varepsilon _{1}\mu
_{1} \\ 
\text{for }\varepsilon _{2}^{2}\geq \varepsilon _{1}:\;\varepsilon
_{1}^{-1/2}\lesssim \mu _{1}\lesssim \varepsilon _{1}^{-1} \\ 
\text{for }\varepsilon _{2}^{2}\leq \varepsilon _{1}:\;\varepsilon
_{1}^{-1/2}\lesssim \mu _{1}\lesssim \varepsilon _{1}^{-1/2}%
\end{array}%
\right\} .  \label{mu_a}
\end{equation}%
The values of $\mu _{0},\mu _{1}$ determine the strength of the boundary
layers and since $\left\vert \lambda _{0}(x)\right\vert <\left\vert \lambda
_{1}(x)\right\vert $ the layer at $x=1$ is stronger than the layer at $x=0$.
\ Essentially, there are three regimes \cite{L}, as seen in Table 1.

\begin{table}[h]
\begin{center}
\begin{tabular}{||cccc||}
\hline
\label{table1} &  & $\mu _{0}$ & $\mu _{1}$ \\[0.5ex] \hline\hline
convection-diffusion & $\varepsilon _{1}<<\varepsilon _{2}=1$ & $1$ & $%
\varepsilon _{1}^{-1}$ \\ \hline
convection-reaction-diffusion & $\varepsilon _{1}<<\varepsilon _{2}^{2}<<1$
& $\varepsilon _{2}^{-1}$ & $\varepsilon _{2}/\varepsilon _{1}$ \\ \hline
reaction-diffusion & $1>>\varepsilon _{1}>>\varepsilon _{2}^{2}$ & $%
\varepsilon _{1}^{-1/2}$ & $\varepsilon _{1}^{-1/2}$ \\[1ex] \hline
\end{tabular}%
\end{center}
\caption{Different regimes based on the relationship between $\protect%
\varepsilon_1$ and $\protect\varepsilon_2$.}
\end{table}

The above considerations suggest the following two cases:

\begin{enumerate}
\item $\varepsilon _{1}$ is large compared to $\varepsilon _{2}$: this is
similar to a `regular perturbation' of reaction-diffusion type. If one
considers the limiting case $\varepsilon _{2}=0$, then one sees that there
are two boundary layers, one at each endpoint, of width $O\left( \varepsilon
_{1}^{1/2}\right) $. \ This situation has been studied in the literature
(see, e.g., \cite{melenk}) and will not be considered further in this
article.

\item $\varepsilon _{1}$ is small compared to $\varepsilon _{2}$: before
discussing the different regimes, it is instructive to consider the limiting
case $\varepsilon _{1}=0$. Then there is an exponential layer (of length
scale $O(\varepsilon _{2})$) at the \emph{left} endpoint. The homogeneous
equation (with constant coefficients) suggests that the different regimes
are $\varepsilon_{1}<<\varepsilon _{2}^{2},\varepsilon _{1}\approx
\varepsilon _{2}^{2},\varepsilon _{1}>>\varepsilon _{2}^{2}.$

\begin{enumerate}
\item In the regime $\varepsilon _{1}<<\varepsilon _{2}^{2}$ we have $\mu
_{0}=O(\varepsilon _{2}^{-1})$ and $\mu _{1}=O(\varepsilon _{2}\varepsilon
_{1}^{-1})$. Hence $\mu _{1}$ is much larger than $\mu _{0}$ and the
boundary layer in the vicinity of $x=1$ is stronger. \ Consequently, there
is a layer of width $O(\varepsilon _{2})$ at the left endpoint (the one that
arose from the analysis of the case $\varepsilon _{1}=0$) and additionally,
there is another layer at the right endpoint, of width $O(\varepsilon
_{1}/\varepsilon _{2})$.

\item In the regime $\varepsilon _{1}\approx \varepsilon _{2}^{2}$ there are
layers at both endpoints of width $O(\varepsilon _{2})=O\left( \varepsilon
_{1}^{1/2}\right) $.

\item In the regime $\varepsilon _{2}^{2}<<\varepsilon _{1}<<1$, there are
layers at both endpoints of width $O\left( \varepsilon _{1}^{1/2}\right) $.
\end{enumerate}
\end{enumerate}

\subsection{The asymptotic expansion\label{asy}}

We focus on Case 2 (a)--(c) above, i.e. $\varepsilon _{1}<\varepsilon _{2}$,
and describe an appropriate asymptotic expansion for $u$, in what follows.
(The material also appears in \cite{IX}.)

\subsubsection{The regime $\protect\varepsilon _{1}<<\protect\varepsilon %
_{2}^{2}<<1\label{regime1}$}

In this case we anticipate a layer of width $O(\varepsilon _{2})$ at the
left endpoint and a layer of width $O\left( \varepsilon _{1}/\varepsilon
_{2}\right) $ at the right endpoint. To deal with this we define the \emph{%
stretched variables} $\tilde{x}=x/\varepsilon _{2}$ and $\hat{x}%
=(1-x)\varepsilon _{2}/\varepsilon _{1}$ and make the formal ansatz 
\begin{equation}
u\sim \sum_{i=0}^{\infty }\sum_{j=0}^{\infty }\varepsilon
_{2}^{i}(\varepsilon _{1}/\varepsilon _{2}^{2})^{j}\left( u_{i,j}(x)+\tilde{u%
}_{i,j}^{BL}(\tilde{x})+\hat{u}_{i,j}^{BL}(\hat{x})\right) ,  \label{c1}
\end{equation}%
with $u_{i,j},\tilde{u}_{i,j}^{BL},\hat{u}_{i,j}^{BL}$ to be determined.
Substituting (\ref{c1}) into (\ref{de}), separating the slow (i.e. $x$) and
fast (i.e. $\tilde{x},\hat{x}$) variables, and equating like powers of $%
\varepsilon _{1}$ and $\varepsilon _{2}$ , we get{\footnote{%
The constant coefficient case is considerably simpler -- see \cite{Irene}.}} 
\begin{equation}
\left. 
\begin{array}{c}
u_{0,0}(x)=\frac{f(x)}{c(x)} \\ 
u_{i,0}(x)=-\frac{b(x)}{c(x)}u_{i-1,0}^{\prime }(x),i\geq 1 \\ 
u_{0,j}(x)=u_{1,j}(x)=0,j\geq 1 \\ 
u_{i,j}(x)=\frac{1}{c(x)}\left( u_{i-2,j-1}^{\prime \prime
}(x)-b(x)u_{i-1,j}^{\prime }(x)\right) ,i\geq 2,j\geq 1%
\end{array}%
\right\} ,  \label{c1smooth}
\end{equation}%
\begin{equation}
\left. 
\begin{array}{c}
\tilde{b}_{0}\left( \tilde{u}_{0,0}^{BL}\right) ^{\prime }+\tilde{c}_{0}%
\tilde{u}_{0,0}^{BL}=0 \\ 
\tilde{b}_{0}\left( \tilde{u}_{i,0}^{BL}\right) ^{\prime }+\tilde{c}_{0}%
\tilde{u}_{i,0}^{BL}=-\sum_{k=1}^{i}\left( \tilde{b}_{k}\left( \tilde{u}%
_{i-k,0}^{BL}\right) ^{\prime }+\tilde{c}_{k}\tilde{u}_{i-k,0}^{BL}\right)
,i\geq 1 \\ 
\tilde{b}_{0}\left( \tilde{u}_{0,j}^{BL}\right) ^{\prime }+\tilde{c}_{0}%
\tilde{u}_{0,j}^{BL}=\left( \tilde{u}_{0,j-1}^{BL}\right) ^{\prime \prime
},j\geq 1 \\ 
\tilde{b}_{0}\left( \tilde{u}_{i,j}^{BL}\right) ^{\prime }+\tilde{c}_{0}%
\tilde{u}_{i,j}^{BL}=\left( \tilde{u}_{i,j-1}^{BL}\right) ^{\prime \prime
}-\sum_{k=1}^{i}\left( \tilde{b}_{k}\left( \tilde{u}_{i-k,j}^{BL}\right)
^{\prime }+\tilde{c}_{k}\tilde{u}_{i-k,j}^{BL}\right) ,i\geq 1,j\geq 1%
\end{array}%
\right\} ,  \label{c1BLa}
\end{equation}%
\begin{equation}
\left. 
\begin{array}{c}
\left( \hat{u}_{i,0}^{BL}\right) ^{\prime \prime }+\hat{b}_{0}\left( \hat{u}%
_{i,0}^{BL}\right) ^{\prime }=0,i\geq 0 \\ 
\left( \hat{u}_{0,j}^{BL}\right) ^{\prime \prime }+\hat{b}_{0}\left( \hat{u}%
_{0,j}^{BL}\right) ^{\prime }=\hat{c}_{0}\hat{u}_{0,j-1}^{BL},j\geq 1 \\ 
\left( \hat{u}_{i,1}^{BL}\right) ^{\prime \prime }+\hat{b}_{0}\left( \hat{u}%
_{i,1}^{BL}\right) ^{\prime }=\hat{c}_{0}\hat{u}_{i,0}^{BL}-\hat{b}%
_{1}\left( \hat{u}_{i-1,0}^{BL}\right) ^{\prime },i\geq 1 \\ 
\left( \hat{u}_{1,j}^{BL}\right) ^{\prime \prime }+\hat{b}_{0}\left( \hat{u}%
_{1,j}^{BL}\right) ^{\prime }=\hat{c}_{0}\hat{u}_{1,j-1}^{BL}-\hat{b}%
_{1}\left( \hat{u}_{0,j-1}^{BL}\right) ^{\prime }+\hat{c}_{1}\hat{u}%
_{0,j-2}^{BL},j\geq 2 \\ 
\left( \hat{u}_{i,j}^{BL}\right) ^{\prime \prime }+\hat{b}_{0}\left( \hat{u}%
_{i,j}^{BL}\right) ^{\prime }=\hat{c}_{0}\hat{u}_{i,j-1}^{BL}-\hat{b}%
_{j}\left( \hat{u}_{i-j,0}^{BL}\right) ^{\prime }+ \\ 
\sum_{k=1}^{j-1}\left\{ -\hat{b}_{k}\left( \hat{u}_{i-k,j-k}^{BL}\right)
^{\prime }+\hat{c}_{k}\hat{u}_{i-k,j-k-1}^{BL}\right\} ,i\geq 2,j=2,...,i \\ 
\left( \hat{u}_{i,j}^{BL}\right) ^{\prime \prime }+\hat{b}_{0}\left( \hat{u}%
_{i,j}^{BL}\right) ^{\prime }=\hat{c}_{0}\hat{u}_{i,j-1}^{BL}+ \\ 
\sum_{k=1}^{i}\left\{ -\hat{b}_{k}\left( \hat{u}_{i-k,j-k}^{BL}\right)
^{\prime }+\hat{c}_{k}\hat{u}_{i-k,j-k-1}^{BL}\right\} ,i\geq 2,j>i%
\end{array}%
\right\} ,  \label{c1BLb}
\end{equation}%
where the notation $\tilde{b}_{k}(\tilde{x})=\tilde{x}^{k}b^{(k)}(0)/k!$ , $%
\hat{b}_{k}(\hat{x})=(-1)^{k}\hat{x}^{k}b^{(k)}(1)/k!$ is used, and
analogously for the other terms. (We also adopt the convention that empty
sums are 0.) The BVPs (\ref{c1BLa})--(\ref{c1BLb}) are supplemented with the
following boundary conditions (in order for (\ref{bc}) to be satisfied) for
all $i,j\geq 0$: 
\begin{equation}
\left. 
\begin{array}{c}
\tilde{u}_{i,j}^{BL}(0)=-u_{i,j}(0)\;,\;\lim_{\tilde{x}\rightarrow \infty }%
\tilde{u}_{i,j}^{BL}(\tilde{x})=0 \\ 
\hat{u}_{i,j}^{BL}(0)=-u_{i,j}(1)\;,\;\lim_{\hat{x}\rightarrow \infty }\hat{u%
}_{i,j}^{BL}(\hat{x})=0%
\end{array}%
\right\} .  \label{c1BC}
\end{equation}%
Next, we define for some $M_{1},M_{2}\in \mathbb{N},$%
\begin{eqnarray}
u_{M}(x) &:&=\sum_{i=0}^{M_{1}}\sum_{j=0}^{M_{2}}\varepsilon
_{2}^{i}(\varepsilon _{1}/\varepsilon _{2}^{2})^{j}u_{i,j}(x),  \label{uM1}
\\
\tilde{u}_{M}^{BL}(\tilde{x})
&:&=\sum_{i=0}^{M_{1}}\sum_{j=0}^{M_{2}}\varepsilon _{2}^{i}(\varepsilon
_{1}/\varepsilon _{2}^{2})^{j}\tilde{u}_{i,j}^{BL}(\tilde{x}),  \label{uBL1a}
\\
\hat{u}_{M}^{BL}(\hat{x})
&:&=\sum_{i=0}^{M_{1}}\sum_{j=0}^{M_{2}}\varepsilon _{2}^{i}(\varepsilon
_{1}/\varepsilon _{2}^{2})^{j}\hat{u}_{i,j}^{BL}(\hat{x}),  \label{uBL1b} \\
r_{M}^{1} &:&=u-\left( u_{M}+\tilde{u}_{M}^{BL}+\hat{u}_{M}^{BL}\right)
\label{rM1}
\end{eqnarray}%
and we have the following decomposition 
\begin{equation}
u=u_{M}+\tilde{u}_{M}^{BL}+\hat{u}_{M}^{BL}+r_{M}^{1}.  \label{decomp1}
\end{equation}%
The following was shown in \cite{IX} and it gives analytic regularity bounds
on each term in the decomposition (\ref{decomp1}).

\begin{proposition}
\label{thm_reg1}Assume (\ref{analytic}), (\ref{data}) hold. Then there exist
positive constants $K_{1},K_{2},\tilde{K},\hat{K},\tilde{\gamma},\hat{\gamma}%
,\delta $, independent of $\varepsilon _{1},\varepsilon _{2},$ such that the
solution $u$ of (\ref{de})--(\ref{bc}) can be decomposed as in (\ref{decomp1}%
), with 
\begin{equation}
\left\Vert u_{M}^{(n)}\right\Vert _{\infty ,I}\lesssim n!K_{1}^{n}\;\forall
\;n\in \mathbb{N}_{0},  \label{uM1bound}
\end{equation}%
\begin{equation}
\left\vert \left( \tilde{u}_{M}^{BL}\right) ^{(n)}(x)\right\vert \lesssim 
\tilde{K}^{n}\varepsilon _{2}^{-n}e^{-dist(x,\partial I)/\varepsilon
_{2}}\;\forall \;n\in \mathbb{N}_{0},  \label{uBL1abound}
\end{equation}%
\begin{equation}
\left\vert \left( \hat{u}_{M}^{BL}\right) ^{(n)}(x)\right\vert \lesssim \hat{%
K}_{2}^{n}\left( \frac{\varepsilon _{1}}{\varepsilon _{2}}\right)
^{-n}e^{-dist(x,\partial I)\varepsilon _{2}/\varepsilon _{1}}\;\forall
\;n\in \mathbb{N}_{0},  \label{uBL1bbound}
\end{equation}%
\begin{equation}
\left\Vert r_{M}\right\Vert _{\infty ,\partial I}+\left\Vert
r_{M}\right\Vert _{0,I}+\varepsilon _{1}^{1/2}\left\Vert r_{M}^{\prime
}\right\Vert _{0,I}\lesssim \max \{e^{-\delta \varepsilon _{2}/\varepsilon
_{1}},e^{-\delta /\varepsilon _{2}}\}  \label{rM1bound}
\end{equation}%
provided $4\varepsilon _{2}e^{2}M_{1}^{2}\max \{1,K_{2},\tilde{\gamma}_{1},%
\tilde{\gamma}_{2},\hat{\gamma}_{1},\hat{\gamma}_{2},\tilde{\gamma}%
_{1}^{2}\}<1$ and $\frac{\varepsilon _{1}}{\varepsilon _{2}^{2}}%
e^{2}M_{2}\max \{1,K_{2},\tilde{\gamma}_{1},\tilde{\gamma}_{2},\hat{\gamma}%
_{1},\hat{\gamma}_{2},\tilde{\gamma}_{1}^{2}\}<1$.
\end{proposition}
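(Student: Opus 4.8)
The plan is to prove the three families of term-by-term bounds by controlling each coefficient $u_{i,j}$, $\tilde u_{i,j}^{BL}$, $\hat u_{i,j}^{BL}$ of the formal ansatz individually, summing the resulting finite double series (the stated smallness conditions on $M_1,M_2$ being exactly what makes the partial sums geometrically summable), and then handling the remainder $r_M^1$ through a stability estimate applied to the residual of the truncated expansion.

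First I would treat the smooth part. From the recursions (\ref{c1smooth}) together with the analyticity hypotheses (\ref{analytic}) on $b,c,f$ — and the fact that $1/c$ is analytic since $c\geq\gamma>0$ — an induction on $i+j$ using the Leibniz rule yields bounds of the form $\|u_{i,j}^{(n)}\|_{\infty,I}\lesssim C^{\,i+j}(i+j+n)!\,\tilde\gamma^{\,n}$ for suitable constants. Substituting into (\ref{uM1}) and using that the weights $\varepsilon_2^{i}(\varepsilon_1/\varepsilon_2^{2})^{j}$ are summable against this factorial growth — this is precisely where $4\varepsilon_2 e^2 M_1^2\max\{\dots\}<1$ and $(\varepsilon_1/\varepsilon_2^{2})e^2 M_2\max\{\dots\}<1$ enter — produces (\ref{uM1bound}).

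Next, the layer terms. The left-layer ODEs (\ref{c1BLa}) are first order with leading coefficient $\tilde b_0(0)=b(0)\geq\beta>0$ and zeroth-order term $\tilde c_0=c(0)\geq\gamma>0$; solving by an integrating factor on $(0,\infty)$ with the decay condition in (\ref{c1BC}) gives $|\tilde u_{i,j}^{BL}(\tilde x)|\lesssim C^{\,i+j}(i+j)!\,e^{-b_0\tilde x}$, the polynomial-in-$\tilde x$ factors from $\tilde b_k,\tilde c_k$ being absorbed by slightly decreasing the exponential rate. Since $\tilde x=x/\varepsilon_2$, each $x$-derivative costs a factor $\varepsilon_2^{-1}$, so $|(\tilde u_{i,j}^{BL})^{(n)}(x)|\lesssim C^{\,i+j}(i+j)!\,\tilde K^n\varepsilon_2^{-n}e^{-b_0 x/\varepsilon_2}$; summing as before and noting $e^{-b_0 x/\varepsilon_2}\lesssim e^{-\mathrm{dist}(x,\partial I)/\varepsilon_2}$ near $x=0$ gives (\ref{uBL1abound}). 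The right-layer ODEs (\ref{c1BLb}) are second order of the form $v''+\hat b_0 v'=(\text{lower order})$ with characteristic roots $0$ and $-\hat b_0=-b(1)<0$, so the bounded solution on $(0,\infty)$ decays like $e^{-b_1\hat x}$; now each $x$-derivative costs $(\varepsilon_1/\varepsilon_2)^{-1}$ because $\hat x=(1-x)\varepsilon_2/\varepsilon_1$, and the same summation yields (\ref{uBL1bbound}).

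Finally, the remainder. Writing $r_M^1=u-(u_M+\tilde u_M^{BL}+\hat u_M^{BL})$ and applying the operator $Lu:=-\varepsilon_1 u''+\varepsilon_2 b u'+cu$, the retained terms were constructed so that $Lr_M^1$ equals a residual built only from the ``tail'' contributions with $i>M_1$ or $j>M_2$; the term-by-term bounds above then bound $\|Lr_M^1\|_{0,I}$ by a geometric tail which, choosing $M_1\asymp\varepsilon_2^{-1}$ and $M_2\asymp\varepsilon_2^2/\varepsilon_1$, is $\lesssim\max\{e^{-\delta\varepsilon_2/\varepsilon_1},e^{-\delta/\varepsilon_2}\}$ for some $\delta>0$. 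One must also note that $r_M^1$ does not vanish on $\partial I$: the left layer contributes $O(e^{-b_0/\varepsilon_2})$ at $x=1$ and the right layer $O(e^{-b_1\varepsilon_2/\varepsilon_1})$ at $x=0$, both exponentially small, which directly gives the $\|r_M^1\|_{\infty,\partial I}$ part and, after subtracting an explicit exponentially small correction to reduce to homogeneous boundary conditions, leaves the energy estimate. The coercivity furnished by (\ref{data}), in particular $c-\tfrac{\varepsilon_2}{2}b'\geq\rho>0$, yields $\varepsilon_1\|(r_M^1)'\|_{0,I}^2+\|r_M^1\|_{0,I}^2\lesssim\|Lr_M^1\|_{0,I}^2+(\text{boundary terms})$, hence (\ref{rM1bound}). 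The main obstacle is the bookkeeping in this residual estimate: one must track the mixed factorial/geometric growth of the tail terms in \emph{both} indices simultaneously and verify that the optimized truncation orders $M_1,M_2$ produce a genuinely exponential — not merely algebraic — rate uniformly in $\varepsilon_1,\varepsilon_2$, while keeping the $\varepsilon_1$-weighting consistent with the norm on the left-hand side of (\ref{rM1bound}).
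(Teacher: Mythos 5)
A preliminary point: the paper itself contains no proof of Proposition \ref{thm_reg1} --- it is quoted from \cite{IX} (``The following was shown in \cite{IX}\ldots''), so there is no in-paper argument to compare yours against. Your outline does follow the same general strategy as that reference and the framework of \cite{melenk97}, \cite{melenk}: bound each coefficient $u_{i,j}$, $\tilde u_{i,j}^{BL}$, $\hat u_{i,j}^{BL}$ of the ansatz (\ref{c1}) by induction from (\ref{c1smooth})--(\ref{c1BLb}), sum the truncated double series using the smallness hypotheses on $M_1,M_2$, and estimate $r_M^1$ by applying the operator $L$, noting that the residual consists of discarded/mismatched terms plus exponentially small boundary values, and invoking the stability furnished by (\ref{data}).

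As a proof, however, your sketch has concrete gaps. First, the choice $M_1\asymp\varepsilon_2^{-1}$ in your remainder step is inadmissible: the hypothesis $4\varepsilon_2e^2M_1^2\max\{\ldots\}<1$ only allows $M_1\lesssim\varepsilon_2^{-1/2}$, and in any case the proposition is asserted for \emph{given} admissible $M_1,M_2$; the two exponential scales in (\ref{rM1bound}) come from the boundary mismatches of the two layer expansions at the opposite endpoints ($\tilde u_M^{BL}$ at $x=1$, $\hat u_M^{BL}$ at $x=0$) together with the geometric smallness of the residual terms that the admissibility conditions guarantee, not from optimizing $M_1,M_2$ inside the proof --- so this part of your argument needs to be restructured, and the $\varepsilon_1$-weighted stability estimate then has to be checked against those scales. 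Second, the left-layer decay rate is $c(0)/b(0)$ (the leading equation is $b(0)v'+c(0)v=0$), not $b(0)$ as you wrote; tracking the correct rate matters for matching the exponent in (\ref{uBL1abound}). Third, the genuinely hard content --- the inductive derivative bounds on $u_{i,j}^{(n)}$, $(\tilde u_{i,j}^{BL})^{(n)}$, $(\hat u_{i,j}^{BL})^{(n)}$ jointly in $i$, $j$ and $n$, with constants explicit enough to produce exactly the quantities $\tilde\gamma_1,\tilde\gamma_2,\hat\gamma_1,\hat\gamma_2$ appearing in the admissibility conditions and to make the double sums close uniformly in $\varepsilon_1,\varepsilon_2$ --- is asserted rather than carried out, and this bookkeeping (which you yourself flag as ``the main obstacle'') is precisely where the cited proof in \cite{IX} does its work.
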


\subsubsection{The regime $\protect\varepsilon _{1}\approx \protect%
\varepsilon _{2}^{2}\label{regime2}$}

Now there are layers at both endpoints of width $O(\varepsilon _{2})$. So
with $\tilde{x}=x/\varepsilon _{2},\overline{x}=(1-x)/\varepsilon _{2},$ we
make the formal ansatz 
\begin{equation}
u\sim \sum_{i=0}^{\infty }\varepsilon _{2}^{i}\left( u_{i}(x)+\tilde{u}%
_{i}^{BL}(\tilde{x})+\overline{u}_{i}^{BL}(\overline{x})\right) ,  \label{c2}
\end{equation}%
with $u_{i},\tilde{u}_{i}^{BL},\overline{u}_{i}^{BL}$ to be determined. \
Subsituting (\ref{c2}) into (\ref{de}), separating the slow (i.e. $x$) and
fast (i.e. $\tilde{x},\overline{x})$ variables, and equating like powers of $%
\varepsilon _{1}(=\varepsilon _{2}^{2})$ and $\varepsilon _{2}$ we get 
\begin{equation*}
\left. 
\begin{array}{c}
u_{0}(x)=\frac{f(x)}{c(x)}\;,\;u_{1}(x)=-\frac{b(x)}{c(x)}u_{0}^{\prime }(x),
\\ 
u_{i}(x)=\frac{1}{c(x)}\left( u_{i-2}^{\prime \prime
}(x)-b(x)u_{i-1}^{\prime }(x)\right) ,i\geq 2,%
\end{array}%
\right\}
\end{equation*}%
\begin{equation*}
\left. 
\begin{array}{c}
-\left( \tilde{u}_{0}^{BL}\right) ^{\prime \prime }+\tilde{b}_{0}\left( 
\tilde{u}_{0}^{BL}\right) ^{\prime }+\tilde{c}_{0}\tilde{u}_{0}^{BL}=0, \\ 
-\left( \tilde{u}_{i}^{BL}\right) ^{\prime \prime }+\tilde{b}_{0}\left( 
\tilde{u}_{i}^{BL}\right) ^{\prime }+\tilde{c}_{0}\tilde{u}%
_{i}^{BL}=-\sum_{k=1}^{i}\left( \tilde{b}_{k}\left( \tilde{u}%
_{i-k}^{BL}\right) ^{\prime }+\tilde{c}_{k}\tilde{u}_{i-k}^{BL}\right)
,i\geq 1%
\end{array}%
\right\}
\end{equation*}%
\begin{equation*}
\left. 
\begin{array}{c}
-\left( \bar{u}_{i}^{BL}\right) ^{\prime \prime }-\bar{b}_{0}\left( \bar{u}%
_{i}^{BL}\right) ^{\prime }+\bar{c}_{0}\bar{u}_{i}^{BL}=0, \\ 
-\left( \bar{u}_{i}^{BL}\right) ^{\prime \prime }+\bar{b}_{0}\left( \bar{u}%
_{i}^{BL}\right) ^{\prime }+\bar{c}_{0}\bar{u}_{i}^{BL}=\sum_{k=1}^{i}\left( 
\bar{b}_{k}\left( \bar{u}_{i-k}^{BL}\right) ^{\prime }-\bar{c}_{k}\bar{u}%
_{i-k}^{BL}\right) ,i\geq 1%
\end{array}%
\right\}
\end{equation*}%
where the notation $\tilde{b}_{k}(\tilde{x})=\tilde{x}^{k}b^{(k)}(0)/k!$
etc., is used again. The above equations are supplemented with the following
boundary conditions (in order to satisfy (\ref{bc})):

\begin{equation*}
\left. 
\begin{array}{c}
u_{i}(0)+\tilde{u}_{i}^{BL}(0)=0, \\ 
u_{i}(1)+\bar{u}_{i}^{BL}(0)=0, \\ 
\lim_{\tilde{x}\rightarrow \infty }\tilde{u}_{i}^{BL}(\tilde{x})=0\,,\,\lim_{%
\overline{x}\rightarrow \infty }\bar{u}_{i}^{BL}(\overline{x})=0.%
\end{array}%
\right\}
\end{equation*}%
We then define, for some $M\in \mathbb{Z},$ 
\begin{equation*}
u_{M}(x)=\sum_{i=0}^{M}\varepsilon _{2}^{i}u_{i}(x),\tilde{u}_{M}^{BL}(%
\tilde{x})=\sum_{i=0}^{M}\varepsilon _{2}^{i}\tilde{u}_{i}^{BL}(\tilde{x}),%
\overline{u}_{M}^{BL}(\overline{x})=\sum_{i=0}^{M}\varepsilon _{2}^{i}\bar{u}%
_{i}^{BL}(\overline{x}),
\end{equation*}%
as well as 
\begin{equation}
u=u_{M}+\tilde{u}_{M}^{BL}+\hat{u}_{M}^{BL}+r_{M}.  \label{decomp2}
\end{equation}%
The following was proven in \cite{melenk97}.

\begin{proposition}
\label{thm_reg2}Assume (\ref{analytic}), (\ref{data}) hold. Then there exist
positive constants $K_{1},K_{2},\tilde{K},\overline{K},\delta $, independent
of $\varepsilon _{1},\varepsilon _{2},$ such that the solution $u$ of (\ref%
{de})--(\ref{bc}) can be decomposed as in (\ref{decomp2}), with 
\begin{equation*}
\left\Vert u_{M}^{(n)}\right\Vert _{\infty ,I}\lesssim n!K_{1}^{n}\;\forall
\;n\in \mathbb{N}_{0},
\end{equation*}%
\begin{equation*}
\left\vert \left( \tilde{u}_{M}^{BL}\right) ^{(n)}(x)\right\vert \lesssim 
\tilde{K}^{n}\varepsilon _{2}^{-n}e^{-dist(x,\partial I)/\varepsilon
_{2}}\;\forall \;n\in \mathbb{N}_{0},
\end{equation*}%
\begin{equation*}
\left\vert \left( \bar{u}_{M}^{BL}\right) ^{(n)}(x)\right\vert \lesssim 
\overline{K}^{n}\varepsilon _{2}^{-n}e^{-dist(x,\partial I)/\varepsilon
_{2}}\;\forall \;n\in \mathbb{N}_{0},
\end{equation*}%
\begin{equation*}
\left\Vert r_{M}\right\Vert _{\infty ,\partial I}+\left\Vert
r_{M}\right\Vert _{0,I}+\varepsilon _{1}^{1/2}\left\Vert r_{M}^{\prime
}\right\Vert _{0,I}\lesssim ,e^{-\delta /\varepsilon _{2}}
\end{equation*}%
provided $\varepsilon _{2}K_{2}M<1$.
\end{proposition}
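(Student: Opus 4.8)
The plan is to follow, \emph{mutatis mutandis}, the route used for Proposition \ref{thm_reg1} (and originally, for the pure reaction-diffusion model, in \cite{melenk97}, which this regime mimics because $\varepsilon _{1}\approx \varepsilon _{2}^{2}$ demotes the convective term $\varepsilon _{2}bu^{\prime }$ to the same order as the reaction term $cu$): build the truncated matched asymptotic expansion (\ref{c2}) explicitly, prove analytic-type bounds on every term, and control the remainder $r_{M}$ via an a priori stability estimate for $L_{\varepsilon _{1},\varepsilon _{2}}v:=-\varepsilon _{1}v^{\prime \prime }+\varepsilon _{2}bv^{\prime }+cv$. First I would record that inserting (\ref{c2}) into (\ref{de}) and separating the slow variable $x$ from the fast variables $\tilde{x}=x/\varepsilon _{2}$, $\overline{x}=(1-x)/\varepsilon _{2}$ produces exactly the recursions displayed before the statement: an \emph{algebraic} recursion $u_{i}=\tfrac{1}{c}(u_{i-2}^{\prime \prime }-bu_{i-1}^{\prime })$ for the outer terms, and constant-coefficient inhomogeneous second-order ODEs on $(0,\infty )$ for the layer correctors $\tilde{u}_{i}^{BL}$, $\overline{u}_{i}^{BL}$, whose homogeneous solutions decay like $e^{-\kappa \tilde{x}}$ resp.\ $e^{-\kappa \overline{x}}$ with a rate $\kappa >0$ depending only on $b(0),c(0)$ resp.\ $b(1),c(1)$ (the relevant characteristic root is negative and bounded away from $0$ because $b\geq \beta >0$ and $c\geq \gamma >0$ by (\ref{data})); these ODEs are closed by $u_{i}(0)+\tilde{u}_{i}^{BL}(0)=0$, $u_{i}(1)+\overline{u}_{i}^{BL}(0)=0$ and decay at $+\infty $.

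Next I would prove, by induction on $i$, bounds of the form $\Vert u_{i}^{(n)}\Vert _{\infty ,I}\lesssim n!\,\gamma ^{n}K_{2}^{i}$ and $|(\tilde{u}_{i}^{BL})^{(n)}(\tilde{x})|\lesssim \tilde{K}^{\,n+i}e^{-\kappa \tilde{x}/2}$ (and likewise for $\overline{u}_{i}^{BL}$), with the free constants chosen in advance so that the induction closes. The ingredients are: the data estimates (\ref{analytic}); Cauchy-type estimates for products, for the reciprocal $1/c$, and for the Taylor-polynomial coefficients $\tilde{b}_{k},\tilde{c}_{k}$, so that every multiplication by a coefficient is controlled; and, for the layer terms, the explicit integrating-factor / variation-of-parameters representation of the solution of the layer ODE on $(0,\infty )$, which lets one absorb the polynomial-in-$\tilde{x}$ right-hand sides against the exponential decay at the cost of a fixed factor per recursion step. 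Summing the truncated series $u_{M}=\sum_{i=0}^{M}\varepsilon _{2}^{i}u_{i}$, $\tilde{u}_{M}^{BL}=\sum_{i=0}^{M}\varepsilon _{2}^{i}\tilde{u}_{i}^{BL}$, $\overline{u}_{M}^{BL}=\sum_{i=0}^{M}\varepsilon _{2}^{i}\overline{u}_{i}^{BL}$, and using the hypothesis $\varepsilon _{2}K_{2}M<1$ to make the geometric tails converge, yields the four claimed bounds on $u_{M}^{(n)}$, $(\tilde{u}_{M}^{BL})^{(n)}$, $(\overline{u}_{M}^{BL})^{(n)}$; it is precisely $\varepsilon _{1}\approx \varepsilon _{2}^{2}$, hence $\varepsilon _{2}=O(\varepsilon _{1}^{1/2})$, that makes this the correct small quantity at the layer length scale.

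Finally, substituting (\ref{decomp2}) into (\ref{de})--(\ref{bc}) and using the recursions, everything cancels except a truncation residual $\mathcal{R}_{M}:=L_{\varepsilon _{1},\varepsilon _{2}}r_{M}$ consisting of the finitely many leftover order-$\varepsilon _{2}^{M+1}$ contributions (from $u_{M-1}^{\prime },u_{M}^{\prime },u_{M-1}^{\prime \prime },u_{M}^{\prime \prime }$ and the analogous layer terms, together with the tails of the coefficient Taylor expansions), plus the exponentially small mismatch on $\partial I$ produced by evaluating an opposite-endpoint corrector at its far end (e.g.\ $\overline{u}_{M}^{BL}$ at $\overline{x}=1/\varepsilon _{2}$). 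The Step-2 bounds give $\Vert \mathcal{R}_{M}\Vert _{0,I}\lesssim (CK_{2}\varepsilon _{2}M)^{M}$ and $\Vert r_{M}\Vert _{\infty ,\partial I}\lesssim e^{-\delta _{0}/\varepsilon _{2}}$. I would then invoke the energy estimate for $L_{\varepsilon _{1},\varepsilon _{2}}$: testing $L_{\varepsilon _{1},\varepsilon _{2}}r_{M}=\mathcal{R}_{M}$ with $r_{M}$ and integrating by parts, the condition $c-\tfrac{\varepsilon _{2}}{2}b^{\prime }\geq \rho >0$ from (\ref{data}) makes the bilinear form coercive and delivers $\Vert r_{M}\Vert _{0,I}+\varepsilon _{1}^{1/2}\Vert r_{M}^{\prime }\Vert _{0,I}\lesssim \Vert \mathcal{R}_{M}\Vert _{0,I}+\Vert r_{M}\Vert _{\infty ,\partial I}$, the $\varepsilon _{1}^{1/2}$-weighted $H^{1}$ part coming for free from the $\varepsilon _{1}\Vert r_{M}^{\prime }\Vert _{0,I}^{2}$ term of the identity (after subtracting an $H^{1}$-harmonic extension of the boundary data). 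Choosing $M\asymp 1/(e^{2}K_{2}\varepsilon _{2})$ converts $(CK_{2}\varepsilon _{2}M)^{M}$ into $e^{-\delta /\varepsilon _{2}}$, which is the asserted remainder bound.

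The main obstacle is the bookkeeping in the inductive Step 2: each recursion step for the layer correctors both differentiates twice (through the $(\tilde{u}_{i-1}^{BL})^{\prime \prime }$-type right-hand side) and multiplies by the Taylor-polynomial coefficients $\tilde{b}_{k},\tilde{c}_{k}$, whose growth in $k$ must be beaten by the $e^{-\kappa \tilde{x}/2}$ decay on the whole stretched domain $0\leq \tilde{x}\leq 1/\varepsilon _{2}$; keeping the dependence on $i$ no worse than geometric — so that the $\varepsilon _{2}^{i}$-series still converges for $M$ as large as $\sim 1/\varepsilon _{2}$ — is the delicate point and forces a carefully tuned inductive hypothesis with constants fixed up front. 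The stability estimate itself is routine given (\ref{data}); the only care needed there is to retain the $\varepsilon _{1}\Vert r_{M}^{\prime }\Vert _{0,I}^{2}$ term rather than discarding it, so that the $\varepsilon _{1}^{1/2}$-weighted seminorm appears on the left-hand side.
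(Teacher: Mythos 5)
The paper itself offers no proof of Proposition~\ref{thm_reg2}: it is quoted from \cite{melenk97} (the regime $\varepsilon_1\approx\varepsilon_2^2$ being of reaction--diffusion type), just as Proposition~\ref{thm_reg1} is quoted from \cite{IX}. So there is no internal argument to compare against; your outline --- build the expansion (\ref{c2}), prove analytic bounds on $u_i,\tilde u_i^{BL},\bar u_i^{BL}$ by induction, and control $r_M$ through the coercivity furnished by (\ref{data}) after lifting the exponentially small boundary mismatch --- is exactly the architecture of the cited works, and the stability step you describe is the standard one.

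There is, however, a concrete flaw in the inductive hypothesis you propose. The bounds $\Vert u_i^{(n)}\Vert_{\infty,I}\lesssim n!\,\gamma^n K_2^{\,i}$ and $\vert(\tilde u_i^{BL})^{(n)}\vert\lesssim \tilde K^{\,n+i}e^{-\kappa\tilde x/2}$, geometric in $i$, cannot hold: the recursion $u_i=\tfrac1c\bigl(u_{i-2}''-b\,u_{i-1}'\bigr)$ applies roughly $i$ derivatives to the analytic data, so already $\Vert u_i\Vert_{\infty,I}$ grows like $i!\,\gamma^i$, which no geometric factor $K_2^{\,i}$ dominates; the correct hypothesis carries a factorial in $i$ (a factor of the type $(n+i)!\,K^{\,i}$, as in \cite{IX,MXO1}). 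This is not cosmetic, because the proviso $\varepsilon_2K_2M<1$ exists precisely to convert $\varepsilon_2^{\,i}\cdot i!$ into a convergent geometric tail for $i\le M\lesssim(\varepsilon_2K_2)^{-1}$ and to make the truncation residual of size $(C\varepsilon_2K_2M)^M\sim e^{-\delta/\varepsilon_2}$; with your claimed bounds the series would converge for any $M$ and the proviso would be vacuous, a sign the induction as stated does not close. Two smaller points: in this regime the layer recursions have no second-derivative right-hand sides (those occur in the regime of Section~\ref{regime1}); here the full constant-coefficient operator $-(\cdot)''+\tilde b_0(\cdot)'+\tilde c_0(\cdot)$ stays on the left, which is why the decay rate comes from its characteristic roots at $b(0),c(0)$ (resp.\ $b(1),c(1)$) --- that part of your sketch is right. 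And in the remainder step the quantity $\Vert r_M\Vert_{\infty,\partial I}$ is itself one of the asserted bounds, obtained directly from evaluating the opposite-endpoint layer at distance $1$, as you indicate; once the inductive bounds are repaired with the factorial-in-$i$ growth, the rest of your plan goes through.
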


\subsubsection{The regime $\protect\varepsilon _{2}^{2}<<\protect\varepsilon %
_{1}<<1\label{regime3}$}

We anticipate layers at both endpoints of width $O\left( \sqrt{\varepsilon
_{1}}\right) $. So we define the \emph{stretched variables} $\check{x}=x/%
\sqrt{\varepsilon _{1}}$ and $\breve{x}=(1-x)/\sqrt{\varepsilon _{1}}$ and
make the formal ansatz 
\begin{equation}
u\sim \sum_{i=0}^{\infty }\sum_{j=0}^{\infty }\varepsilon _{1}^{i/2}\left(
\varepsilon _{2}/\sqrt{\varepsilon _{1}}\right) ^{j}\left( u_{i,j}(x)+\check{%
u}_{i,j}^{BL}(\check{x})+\breve{u}_{i,j}^{BL}(\breve{x})\right) ,  \label{c3}
\end{equation}%
with $u_{i,j},\check{u}_{i,j}^{BL},\breve{u}_{i,j}^{BL}$ to be determined.
Substituting (\ref{c3}) into (\ref{de}), separating the slow (i.e. $x$) and
fast (i.e. $\check{x},\breve{x})$ variables, and equating like powers of $%
\varepsilon _{1}$ and $\varepsilon _{2}$ we get 
\begin{equation*}
\left. 
\begin{array}{c}
u_{0,0}(x)=\frac{f(x)}{c(x)},u_{1,0}(x)=u_{0,j}(x)=0,j\geq 1, \\ 
u_{i,0}(x)=\frac{1}{c(x)}u_{i-2,0}^{\prime \prime }(x),i\geq 2, \\ 
u_{2i+1,0}(x)=0,i\geq 1, \\ 
u_{1,1}(x)=-\frac{b(x)}{c(x)}u_{0,0}^{\prime }(x),u_{1,j}(x)=0,j\geq 2, \\ 
u_{i,j}(x)=\frac{1}{c(x)}\left( u_{i-2,j}^{\prime \prime
}(x)-b(x)u_{i-1,j-1}^{\prime }(x)\right) ,i\geq 2,j\geq 1,%
\end{array}%
\right\}
\end{equation*}%
\begin{equation*}
\left. 
\begin{array}{c}
-\left( \check{u}_{0,0}^{BL}\right) ^{\prime \prime }+\check{c}_{0}\check{u}%
_{0,0}^{BL}=0, \\ 
-\left( \check{u}_{i,0}^{BL}\right) ^{\prime \prime }+\check{c}_{0}\check{u}%
_{i,0}^{BL}=-\sum_{k=i}^{i}\check{c}_{k}\check{u}_{i-k,0}^{BL},i\geq 1 \\ 
-\left( \check{u}_{0,j}^{BL}\right) ^{\prime \prime }+\check{c}_{0}\check{u}%
_{0,j}^{BL}=-\check{b}_{0}\left( \check{u}_{0,j-1}^{BL}\right) ^{\prime
},j\geq 1 \\ 
-\left( \check{u}_{i,j}^{BL}\right) ^{\prime \prime }+\check{c}_{0}\check{u}%
_{i,j}^{BL}=-\check{b}_{0}\left( \check{u}_{i,j-1}^{BL}\right) ^{\prime }-
\\ 
\sum_{k=1}^{i}\left\{ \check{b}_{k}\left( \check{u}_{i-k,j-1}^{BL}\right)
^{\prime }+\check{c}_{k}\check{u}_{i-k,j}^{BL}\right\} ,i\geq 1,j\geq 1,%
\end{array}%
\right\}
\end{equation*}%
\begin{equation*}
\left. 
\begin{array}{c}
-\left( \grave{u}_{0,0}^{BL}\right) ^{\prime \prime }+\grave{c}_{0}\grave{u}%
_{0,0}^{BL}=0, \\ 
-\left( \grave{u}_{i,0}^{BL}\right) ^{\prime \prime }+\grave{c}_{0}\grave{u}%
_{i,0}^{BL}=-\sum_{k=1}^{i}\grave{c}_{k}\grave{u}_{i-k,0}^{BL},i\geq 1, \\ 
-\left( \grave{u}_{0,j}^{BL}\right) ^{\prime \prime }+\grave{c}_{0}\grave{u}%
_{0,j}^{BL}=\grave{b}_{0}\grave{u}_{0,j-1}^{BL},j\geq 1, \\ 
-\left( \grave{u}_{i,j}^{BL}\right) ^{\prime \prime }+\grave{c}_{0}\grave{u}%
_{i,j}^{BL}=\left( \grave{b}_{0}\grave{u}_{i,j-1}^{BL}\right) ^{\prime }- \\ 
\sum_{k=1}^{i}\left\{ \grave{b}_{k}\left( \grave{u}_{i-k,j-1}^{BL}\right)
^{\prime }-\grave{c}_{k}\grave{u}_{i-k,j}^{BL}\right\} ,i\geq 1,j\geq 1,%
\end{array}%
\right\}
\end{equation*}%
where the notation $\check{b}_{k}(\check{x})=\check{x}^{k}b^{(k)}(0)/k!$
etc., is used once more. The above equations are supplemented with the
following boundary conditions (in order to satisfy (\ref{bc})):

\begin{equation*}
\left. 
\begin{array}{c}
\check{u}_{i,j}^{BL}(0)=-u_{i,j}(0)\,,\,\grave{u}_{i,j}^{BL}(0)=-u_{i,j}(1),
\\ 
\lim_{\check{x}\rightarrow \infty }\check{u}_{i,j}^{BL}(\check{x}%
)=0\,,\,\lim_{\grave{x}\rightarrow \infty }\grave{u}_{i,j}^{BL}(\grave{x})=0.%
\end{array}%
\right\}
\end{equation*}%
We then define, for some $M\in \mathbb{Z},$%
\begin{eqnarray*}
u_{M}(x) &=&\sum_{i=0}^{M}\sum_{j=0}^{M}\varepsilon _{1}^{i/2}\left(
\varepsilon _{2}/\sqrt{\varepsilon _{1}}\right) ^{j}u_{i,j}(x), \\
\check{u}_{M}^{BL}(\check{x}) &=&\sum_{i=0}^{M}\sum_{j=0}^{M}\varepsilon
_{1}^{i/2}\left( \varepsilon _{2}/\sqrt{\varepsilon _{1}}\right) ^{j}\check{u%
}_{i,j}^{BL}(\check{x}), \\
\breve{u}_{M}^{BL}(\breve{x}) &=&\sum_{i=0}^{M}\sum_{j=0}^{M}\varepsilon
_{1}^{i/2}\left( \varepsilon _{2}/\sqrt{\varepsilon _{1}}\right) ^{j}\breve{u%
}_{i,j}^{BL}(\breve{x}),
\end{eqnarray*}%
and we have the following decomposition:%
\begin{equation}
u=u_{M}+\check{u}_{M}^{BL}+\breve{u}_{M}^{BL}+r_{M}.  \label{decomp3}
\end{equation}%
The theorem that follows is the analog of Theorem \ref{thm_reg1} \cite{XI}.

\begin{proposition}
\label{thm_reg3}Assume (\ref{analytic}), (\ref{data}) hold. Then there exist
positive constants $K_{1},\check{K},\breve{K},K_{2}$ and $\delta $,
independent of $\varepsilon _{1},\varepsilon _{2},$ such that the solution $%
u $ of (\ref{de})--(\ref{bc}) can be decomposed as in (\ref{decomp3}), with%
\begin{equation*}
\left\Vert u_{M}^{(n)}\right\Vert _{\infty ,I}\lesssim n!K_{1}^{n}\;\forall
\;n\in \mathbb{N}_{0},
\end{equation*}%
\begin{equation*}
\left\vert \left( \check{u}_{M}^{BL}\right) ^{(n)}(x)\right\vert \lesssim 
\check{K}^{n}\varepsilon _{1}^{-n/2}e^{-dist(x,\partial I)/\sqrt{\varepsilon
_{1}}}\;\forall \;n\in \mathbb{N}_{0},
\end{equation*}%
\begin{equation*}
\left\vert \left( \breve{u}_{M}^{BL}\right) ^{(n)}(x)\right\vert \lesssim 
\breve{K}^{n}\varepsilon _{1}^{-n/2}e^{-dist(x,\partial I)/\sqrt{\varepsilon
_{1}}}\;\forall \;n\in \mathbb{N}_{0},
\end{equation*}%
\begin{equation*}
\left\Vert r_{M}\right\Vert _{\infty ,\partial I}+\left\Vert
r_{M}\right\Vert _{0,I}+\varepsilon _{1}^{1/2}\left\Vert r_{M}^{\prime
}\right\Vert _{0,I}\lesssim ,e^{-\delta /\varepsilon _{2}}
\end{equation*}%
provided $\sqrt{\varepsilon _{1}}K_{2}M<1$.
\end{proposition}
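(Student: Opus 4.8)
\medskip
\noindent\textbf{Proof outline.}
The plan is to follow the matched-asymptotics scheme behind the analogous Proposition~\ref{thm_reg1} (and Proposition~\ref{thm_reg2}), now built on the two expansion scales $\sqrt{\varepsilon_1}$ and $\varepsilon_2/\sqrt{\varepsilon_1}$, both of which are $\le 1$ in the regime $\varepsilon_2^{2}\ll\varepsilon_1\ll1$. The first task is the bookkeeping: substituting the ansatz (\ref{c3}) into (\ref{de}), using $\tfrac{d}{dx}=\varepsilon_1^{-1/2}\tfrac{d}{d\check x}$ and $\tfrac{d}{dx}=-\varepsilon_1^{-1/2}\tfrac{d}{d\breve x}$ to separate the slow variable $x$ from the fast ones $\check x=x/\sqrt{\varepsilon_1}$, $\breve x=(1-x)/\sqrt{\varepsilon_1}$, Taylor-expanding $b$ and $c$ about $x=0$ and $x=1$ (which generates the monomials $\check b_k,\check c_k,\grave b_k,\grave c_k$), and equating like powers of $\varepsilon_1$ and $\varepsilon_2$ reproduces exactly the recursions and side conditions displayed before the statement; the decay conditions at $+\infty$ together with $\check u_{i,j}^{BL}(0)=-u_{i,j}(0)$ and $\grave u_{i,j}^{BL}(0)=-u_{i,j}(1)$ fix each term uniquely.

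For the outer part I would establish, by induction on $i+j$, Gevrey-type bounds for $u_{i,j}^{(n)}$, tracking the $i$-, $j$- and $n$-dependence of the constants explicitly. The seeds $u_{0,0}=f/c$ and $u_{1,1}=-(b/c)u_{0,0}'$ are analytic because $c\ge\gamma>0$ makes $1/c$ analytic with derivative bounds controlled by (\ref{analytic}), and the inductive step rests on the recursion $u_{i,j}=\tfrac1c\bigl(u_{i-2,j}''-b\,u_{i-1,j-1}'\bigr)$ (and its $j=0$ specializations) via the Leibniz rule and the usual product/quotient estimates for Gevrey bounds. Forming $u_M=\sum_{i,j\le M}\varepsilon_1^{i/2}(\varepsilon_2/\sqrt{\varepsilon_1})^{j}u_{i,j}$ and using $\sqrt{\varepsilon_1}K_2M<1$ — with $K_2$ chosen to dominate all the term-level constants, so that the summands decay geometrically in $i$ and are controlled in $j$ (here $\varepsilon_2/\sqrt{\varepsilon_1}\le1$ is used) — collapses the double series to $\|u_M^{(n)}\|_{\infty,I}\lesssim n!K_1^{n}$ with $K_1$ independent of $\varepsilon_1,\varepsilon_2$.

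For the layer parts, the homogeneous fast operators are the constant-coefficient $-\partial_{\check x}^{2}+c(0)$ and $-\partial_{\breve x}^{2}+c(1)$, whose decaying solutions are $e^{-\sqrt{c(0)}\,\check x}$ and $e^{-\sqrt{c(1)}\,\breve x}$. An induction on $i+j$ shows that each $\check u_{i,j}^{BL}$ is a polynomial in $\check x$ times $e^{-\sqrt{c(0)}\,\check x}$: the right-hand sides of the recursion are of the same form (the $\check b_k,\check c_k$ are monomials, the lower-order layer terms are polynomial times exponential, and the inverse of $-\partial^{2}+c(0)$ subject to decay and to the Dirichlet condition at $0$ preserves that structure, raising the polynomial degree by one). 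Controlling the degree, which grows linearly in $i+j$, and the coefficients, then absorbing monomial factors through $\check x^{m}e^{-\sqrt{c(0)}\check x}\le C_\theta^{m}m!\,e^{-\theta\check x}$ for a fixed $\theta\in(0,\sqrt{c(0)})$, and undoing the scaling (each $x$-derivative costing a factor $\varepsilon_1^{-1/2}$), I obtain per-term bounds of the form $|(\check u_{i,j}^{BL})^{(n)}(x)|\lesssim C^{i+j}(i+j)!\,\check K^{n}\varepsilon_1^{-n/2}\,e^{-\theta x/\sqrt{\varepsilon_1}}$. Summing under $\sqrt{\varepsilon_1}K_2M<1$, which renders the summands geometrically decreasing in $i+j$, yields the asserted bound for $\check u_M^{BL}$, and the mirror-image computation at $x=1$ gives the one for $\breve u_M^{BL}$; the same estimates show, in passing, that $\check u_M^{BL}$ evaluated at $x=1$ and $\breve u_M^{BL}$ at $x=0$ are exponentially small, since there the exponential factor is $e^{-\theta/\sqrt{\varepsilon_1}}$.

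Finally, by the side conditions $r_M:=u-(u_M+\check u_M^{BL}+\breve u_M^{BL})$ has $r_M(0)=-\breve u_M^{BL}|_{\breve x=1/\sqrt{\varepsilon_1}}$ and $r_M(1)=-\check u_M^{BL}|_{\check x=1/\sqrt{\varepsilon_1}}$, which are explicitly exponentially small by the previous step (this already bounds $\|r_M\|_{\infty,\partial I}$), and weakly solves $-\varepsilon_1 r_M''+\varepsilon_2 b\,r_M'+c\,r_M=\mathcal{R}_M$ on $I$, where $\mathcal{R}_M$ gathers the recursion terms discarded in the truncation (those carrying a factor $(\sqrt{\varepsilon_1})^{M+1}$ or $(\varepsilon_2/\sqrt{\varepsilon_1})^{M+1}$) together with the Taylor remainders from freezing $b$ and $c$ at the endpoints, so that $\|\mathcal{R}_M\|_{0,I}$ is exponentially small by the preceding estimates and $\sqrt{\varepsilon_1}K_2M<1$. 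Subtracting a suitably decaying lift of the (exponentially small) boundary data and testing the weak form with $r_M$, the coercivity $a(v,v)=\varepsilon_1\|v'\|_{0,I}^{2}+\langle(c-\tfrac{\varepsilon_2}{2}b')v,v\rangle\ge\varepsilon_1\|v'\|_{0,I}^{2}+\rho\|v\|_{0,I}^{2}$ provided by (\ref{data}), together with the continuity of $a$ (where $\varepsilon_2/\sqrt{\varepsilon_1}\le1$ is used for the convection term), bounds $\|r_M\|_{0,I}+\varepsilon_1^{1/2}\|r_M'\|_{0,I}$ by $\|\mathcal{R}_M\|_{0,I}$ plus the boundary contributions, giving the asserted estimate. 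The main obstacle is the constant bookkeeping in the double induction — keeping $K_1,\check K,\breve K$ and the auxiliary constants genuinely independent of $\varepsilon_1,\varepsilon_2$ while the polynomial degrees in the stretched variables grow with $i+j$ — and verifying that the hypothesis $\sqrt{\varepsilon_1}K_2M<1$ is precisely what drives every truncated series into its exponentially small range; the remaining steps are routine analytic-function algebra and a standard stability estimate.
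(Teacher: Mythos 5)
The paper does not actually prove this proposition: it is imported verbatim from \cite{XI} (the text only remarks that it is the analog of Proposition \ref{thm_reg1}), and your outline is precisely the matched-asymptotics-plus-stability argument that the cited source and the analogous Propositions \ref{thm_reg1}--\ref{thm_reg2} rely on, with the key ingredients (Gevrey induction for the outer terms, polynomial-times-$e^{-\sqrt{c(0)}\check x}$ structure for the layer terms, and a coercivity estimate for the residual equation satisfied by $r_M$) correctly identified. One caveat: your argument naturally yields the remainder bound $e^{-\delta/\sqrt{\varepsilon_1}}$, whereas the statement prints $e^{-\delta/\varepsilon_2}$, which in this regime ($\varepsilon_2\ll\sqrt{\varepsilon_1}$) is strictly smaller and does not follow from the layer-width/truncation argument; this (together with the stray comma in the displayed estimate) is evidently a typo inherited from the other regimes, so the discrepancy is in the statement, not in your reasoning.
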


\section{Discretization by an $hp$-FEM\label{mesh}}

\subsection{Discrete formulation and definition of the mesh}

The variational formulation of (\ref{de})--(\ref{bc}) reads: Find $u\in
H_{0}^{1}\left( I\right) $ such that 
\begin{equation}
{\mathcal{B}}\left( u,v\right) ={\mathcal{F}}\left( v\right) \;\;\forall
\;v\in H_{0}^{1}\left( I\right) ,  \label{BuvFv}
\end{equation}%
where 
\begin{eqnarray}
{\mathcal{B}}\left( u,v\right) &=&\varepsilon _{1}\left\langle u^{\prime
},v^{\prime }\right\rangle _{I}+\varepsilon _{2}\left\langle bu^{\prime
},v\right\rangle _{I}+\left\langle cu,v\right\rangle _{I},  \label{Buv} \\
{\mathcal{F}}\left( v\right) &=&\left\langle f,v\right\rangle _{I}.
\label{Fv}
\end{eqnarray}%
The bilinear form ${\mathcal{B}}\left( \cdot ,\cdot \right) $ given by (\ref%
{Buv}) is coercive (due to (\ref{data})) with respect to the \emph{energy
norm} 
\begin{equation}
\left\Vert v\right\Vert _{E,I}^{2}:=\varepsilon _{1}\left\vert v\right\vert
_{1,I}^{2}+\left\Vert v\right\Vert _{0,I}^{2},  \label{energy}
\end{equation}%
i.e., 
\begin{equation}
{\mathcal{B}}\left( v,v\right) \geq \left\Vert v\right\Vert
_{E,I}^{2}\;\;\forall \;v\in H_{0}^{1}\left( I\right) .  \label{coercivity}
\end{equation}

With $S\subset H_{0}^{1}\left( I\right) $ a finite dimensional subspace that
will be defined shortly, the discrete version of (\ref{BuvFv}) reads: find $%
u_{N}\in S$ such that 
\begin{equation}
{\mathcal{B}}\left( u_{N},v\right) ={\mathcal{F}}\left( v\right) \;\;\forall
\;v\in S.  \label{discrete}
\end{equation}%
In order to define the subspace $S$, let $\hat{I}=[-1,1]$ be the reference
element and denote by $\mathcal{P}_{p}(\hat{I})\ $the space of polynomials
on $\hat{I},$ of degree $\leq p$. Then, with $\Delta =\left\{ x_{j}\right\}
_{j=0}^{N}$ an arbitrary subdivision of $I,$ we define 
\begin{equation}
S\equiv S^{p}(\Delta )=\{u\in H_{0}^{1}(\Omega ):u(Q_{j}(\xi ))\in \mathcal{P%
}_{p}(\hat{I}),\,j=1,\dots ,N\},  \label{S}
\end{equation}%
where the linear element mapping is given by $Q_{j}(\xi )=(2\xi
-x_{j-1}-x_{j})/(x_{j}-x_{j-1}).$ \ 

We next give the definition of the \emph{Spectral Boundary Layer Mesh }we
will use (cf. \cite{MXO}):

\begin{definition}[Spectral Boundary Layer mesh]
\label{SBL} Let $\mu _{0},\mu _{1}$ be given by (\ref{mu}). \ For $\kappa >0$%
, $p\in \mathbb{N}$ and $0<\varepsilon _{1},\varepsilon _{2}\leq 1$, define
the Spectral Boundary Layer mesh $\Delta _{BL}(\kappa ,p)$ as%
\begin{equation*}
\Delta _{BL}(\kappa ,p):=\left\{ 
\begin{array}{ll}
\Delta =\{0,1\} & \text{if }\kappa p\mu _{1}^{-1}\geq 1/2 \\ 
\Delta =\{0,\kappa p\mu _{0}^{-1},1-\kappa p\mu _{1}^{-1},1\} & \text{if }%
\kappa p\mu _{0}^{-1}<1/2%
\end{array}%
\right. .
\end{equation*}%
The spaces $S(\kappa ,p)$ and $S_{0}(\kappa ,p)$ of piecewise polynomials of
degree $p$ are given by 
\begin{eqnarray*}
S(\kappa ,p):= &&S^{p}(\Delta _{BL}(\kappa ,p)), \\
S_{0}(\kappa ,p):= &&S_{0}^{p}(\Delta _{BL}(\kappa ,p))=S(\kappa ,p)\cap
H_{0}^{1}(I).
\end{eqnarray*}
\end{definition}

The following tool from \cite{schwab} will be used in the next subsection
for the construction of the approximation.

\begin{proposition}
\label{thm_sch} Let $I=\left( a,b\right) $. \ Then for any $u\in C^{\infty
}(I)$ there exists $\mathcal{I}_{p}u\in \mathcal{P}_{p}(I)$ such that%
\begin{equation}
u\left( a\right) =\mathcal{I}_{p}u\left( a\right) \;,\;u\left( b\right) =%
\mathcal{I}_{p}u\left( b\right) ,  \label{sch_1}
\end{equation}%
\begin{equation}
\left\Vert u-\mathcal{I}_{p}u\right\Vert _{0,I}^{2}\leq \left( b-a\right)
^{2s}\frac{1}{p^{2}}\frac{\left( p-s\right) !}{\left( p+s\right) !}%
\left\Vert u^{(s+1)}\right\Vert _{0,I}^{2}\;,\;0\leq s\leq p,  \label{sch_2}
\end{equation}%
\begin{equation}
\left\Vert \left( u-\mathcal{I}_{p}u\right) ^{\prime }\right\Vert
_{0,I}^{2}\leq \left( b-a\right) ^{2s}\frac{\left( p-s\right) !}{\left(
p+s\right) !}\left\Vert u^{(s+1)}\right\Vert _{0,I}^{2}\;,\;0\leq s\leq p.
\label{sch_3}
\end{equation}
\end{proposition}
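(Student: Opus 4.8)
\medskip
\noindent\emph{Proof sketch.} The plan is to reduce everything to the reference interval $\hat{I}=(-1,1)$ via the affine map $Q(\xi)=\tfrac12(a+b)+\tfrac12(b-a)\xi$ and to build $\mathcal{I}_{p}u$ from a truncated Legendre expansion of its derivative. Under $Q$ the $L^{2}$-norms of a function and of its $(s+1)$-st derivative scale by fixed powers of $b-a$, so I would first establish the three bounds on $\hat{I}$ and then transfer them, tracking the powers of $b-a$ at the end (using $b-a\le|I|$ where the stated exponent calls for it). Writing $\hat{u}:=u\circ Q$, let $\{L_{k}\}_{k\ge0}$ denote the Legendre polynomials, normalised by $\|L_{k}\|_{0,\hat{I}}^{2}=\tfrac{2}{2k+1}$, expand $\hat{u}^{\prime}=\sum_{k\ge0}a_{k}L_{k}$ in $L^{2}(\hat{I})$, and set
\[
\mathcal{I}_{p}\hat{u}(\xi):=\hat{u}(-1)+\int_{-1}^{\xi}\Bigl(\,\sum_{k=0}^{p-1}a_{k}L_{k}(t)\Bigr)\,dt\ \in\ \mathcal{P}_{p}(\hat{I}).
\]

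Since $\int_{-1}^{1}L_{0}=2$ and $\int_{-1}^{1}L_{k}=0$ for $k\ge1$, and $a_{0}=\tfrac12\int_{-1}^{1}\hat{u}^{\prime}=\tfrac12\bigl(\hat{u}(1)-\hat{u}(-1)\bigr)$, a direct computation gives $\mathcal{I}_{p}\hat{u}(-1)=\hat{u}(-1)$ and $\mathcal{I}_{p}\hat{u}(1)=\hat{u}(1)$, which is (\ref{sch_1}) after mapping back. Moreover $(\hat{u}-\mathcal{I}_{p}\hat{u})^{\prime}=\sum_{k\ge p}a_{k}L_{k}$, so by orthogonality $\|(\hat{u}-\mathcal{I}_{p}\hat{u})^{\prime}\|_{0,\hat{I}}^{2}=\sum_{k\ge p}\tfrac{2a_{k}^{2}}{2k+1}$, and the heart of the matter becomes the tail estimate $\sum_{k\ge p}\tfrac{2a_{k}^{2}}{2k+1}\le\tfrac{(p-s)!}{(p+s)!}\|\hat{u}^{(s+1)}\|_{0,\hat{I}}^{2}$.

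This is the step I expect to be the main obstacle, and I would settle it with the classical fact that $L_{k}^{(s)}$ is a constant multiple of the Gegenbauer polynomial of index $s+\tfrac12$: the family $\{L_{k}^{(s)}\}_{k\ge s}$ is orthogonal in $L_{w}^{2}(\hat{I})$ with weight $w(\xi)=(1-\xi^{2})^{s}$, and $\|L_{k}^{(s)}\|_{L_{w}^{2}(\hat{I})}^{2}=\tfrac{2}{2k+1}\cdot\tfrac{(k+s)!}{(k-s)!}$. Applied to $v:=\hat{u}^{\prime}$, whose $s$-th derivative is $\hat{u}^{(s+1)}=\sum_{k\ge s}a_{k}L_{k}^{(s)}$, this gives
\[
\sum_{k\ge s}\frac{2a_{k}^{2}}{2k+1}\cdot\frac{(k+s)!}{(k-s)!}=\int_{-1}^{1}(1-\xi^{2})^{s}\,\bigl|\hat{u}^{(s+1)}(\xi)\bigr|^{2}\,d\xi\ \le\ \|\hat{u}^{(s+1)}\|_{0,\hat{I}}^{2}.
\]
Factoring the non-increasing quantity $\tfrac{(k-s)!}{(k+s)!}$ out of each term of the tail, bounding it by its value at $k=p$, and then extending the sum from $k\ge p$ to all $k\ge s$ (only adding nonnegative terms) yields the desired tail estimate, hence (\ref{sch_3}) after the affine scaling.

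Finally, for the $L^{2}$ bound (\ref{sch_2}) with its extra factor $p^{-2}$, I would use that $w:=\hat{u}-\mathcal{I}_{p}\hat{u}$ vanishes at $\pm1$ and $w^{\prime}=\sum_{k\ge p}a_{k}L_{k}$, together with the primitive identity $\int_{-1}^{\xi}L_{k}=\tfrac{1}{2k+1}\bigl(L_{k+1}(\xi)-L_{k-1}(\xi)\bigr)$ for $k\ge1$, to write $w=\sum_{k\ge p}\tfrac{a_{k}}{2k+1}\bigl(L_{k+1}-L_{k-1}\bigr)$. Collecting the coefficient of each $L_{m}$ in this orthogonal expansion, estimating it by $(\alpha-\beta)^{2}\le2\alpha^{2}+2\beta^{2}$, and using $(2k+1)^{-2}\lesssim p^{-2}$ for $k\ge p$, one obtains $\|w\|_{0,\hat{I}}^{2}\lesssim p^{-2}\sum_{k\ge p}\tfrac{2a_{k}^{2}}{2k+1}=p^{-2}\|(\hat{u}-\mathcal{I}_{p}\hat{u})^{\prime}\|_{0,\hat{I}}^{2}$; combining this with the tail estimate above and transferring through $Q$ gives (\ref{sch_2}). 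Apart from the weighted orthogonality and norm identity for derivatives of Legendre polynomials, every ingredient is routine affine scaling or manipulation of truncated orthogonal series.
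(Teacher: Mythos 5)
The paper never proves Proposition \ref{thm_sch}; it is imported as a known tool from \cite{schwab}, so there is no internal argument to compare against. Your sketch reconstructs, correctly and essentially verbatim, the standard proof from that reference: building $\mathcal{I}_{p}$ by truncating the Legendre expansion of $\hat{u}'$ and integrating, checking the endpoint conditions via $a_{0}=\tfrac12(\hat{u}(1)-\hat{u}(-1))$, obtaining \eqref{sch_3} from the weighted orthogonality $\int_{-1}^{1}(1-\xi^{2})^{s}\bigl|L_{k}^{(s)}\bigr|^{2}d\xi=\tfrac{2}{2k+1}\tfrac{(k+s)!}{(k-s)!}$ together with the monotonicity of $(k-s)!/(k+s)!$ in $k$, and extracting the extra $p^{-2}$ in \eqref{sch_2} from the primitive identity $\int_{-1}^{\xi}L_{k}=\tfrac{1}{2k+1}(L_{k+1}-L_{k-1})$. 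Two small caveats are worth recording. First, as written your $L^{2}$ step only delivers \eqref{sch_2} up to a generic multiplicative constant, because you discard cross terms via $(\alpha-\beta)^{2}\le 2\alpha^{2}+2\beta^{2}$ and bound $(2k+1)^{-2}$ by $p^{-2}$ crudely; this weaker form is all the paper ever uses (everything downstream is stated with $\lesssim$), and a slightly more careful bookkeeping (the coefficient of $a_{k}^{2}$ in your expansion is at most $8/\bigl((2k-1)(2k+1)(2k+3)\bigr)$, and $(2k-1)(2k+3)\ge 4p^{2}$ for $k\ge p\ge 1$) even recovers the stated constant. Second, your argument is for integer $s$, whereas the paper applies the proposition with $s=\lambda p$ non-integer; that gap is exactly what the paper's remark on fractional-order norms (interpolation, or log-convexity of $\Gamma$) is meant to close, so you should flag it but it is consistent with how the result is used. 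Finally, the affine scaling actually yields $\bigl((b-a)/2\bigr)^{2s}$ (with a further factor $\bigl((b-a)/2\bigr)^{2}$ in the $L^{2}$ bound), which is at least as strong as the stated $(b-a)^{2s}$ on the subintervals of $(0,1)$ relevant here, so your vague "tracking powers of $b-a$" does go through.
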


The following auxiliary result will be used repeatedly in the proofs that
follow.

\begin{lemma}
\label{stirling} For every $t \in (0,1]$, there exists a constant $C$
(depending on $t \in (0,1]$) such that for every $q\in \mathbb{N}$, there
holds 
\begin{equation*}
\frac{(q-t q)!}{(q+t q)!}\leq C \left[ \frac{\left( 1-t \right) ^{(1-t )}}{%
\left( 1+t \right) ^{(1+t )}}\right] ^{q} q^{-2tq}e^{2tq}.
\end{equation*}
\end{lemma}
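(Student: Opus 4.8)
The plan is to apply Stirling's formula in its quantitative (non-asymptotic) form to both the numerator and the denominator, and then collect the elementary factors. Recall Stirling's bound: there exist absolute constants so that $m! = \sqrt{2\pi m}\,(m/e)^m e^{\theta_m}$ with $|\theta_m|\le 1/(12m)$ for all $m\ge 1$; equivalently, $m!\asymp \sqrt{m}\,(m/e)^m$ up to bounded multiplicative constants. The key point is that the exponential-order factors $(m/e)^m$ are exact, while all the corrections ($\sqrt{m}$, the $e^{\theta_m}$, and the behavior near $m=0$) are absorbed into the constant $C$.

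\textbf{Step 1: Write $q-tq = q(1-t)$ and $q+tq = q(1+t)$, and note these are positive (since $t\in(0,1]$, we have $1-t\ge 0$ and $1+t\ge 1$).} If $1-t=0$ (i.e.\ $t=1$) the numerator is $0!=1$ and the argument simplifies; otherwise apply Stirling to $(q(1-t))!$ and $(q(1+t))!$. This gives
\begin{equation*}
\frac{(q(1-t))!}{(q(1+t))!} \asymp \sqrt{\frac{q(1-t)}{q(1+t)}}\cdot \frac{\big(q(1-t)/e\big)^{q(1-t)}}{\big(q(1+t)/e\big)^{q(1+t)}},
\end{equation*}
where $\asymp$ hides a constant depending only on $t$ (through the bounds on the $\theta$-terms, which are uniformly bounded once $q\ge 1$, and through the square-root prefactor which is itself bounded by a $t$-dependent constant since $q(1-t)/q(1+t)=(1-t)/(1+t)\le 1$).

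\textbf{Step 2: Expand the exponent bookkeeping.} The power of $q$ is $q(1-t)-q(1+t) = -2tq$, giving the factor $q^{-2tq}$. The power of $e$ in the denominator versus numerator is $-q(1-t)+q(1+t)=2tq$, giving $e^{2tq}$. Finally the remaining factor is $(1-t)^{q(1-t)}/(1+t)^{q(1+t)} = \left[(1-t)^{(1-t)}/(1+t)^{(1+t)}\right]^q$. Multiplying these three together reproduces exactly the bracketed bracket-to-the-$q$ term, the $q^{-2tq}$, and the $e^{2tq}$ in the statement, so the bound follows with $C$ depending only on $t$.

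\textbf{The main obstacle} is really just careful handling of edge cases and of the constant's dependence: (i) when $t=1$ the numerator $0!$ must be treated by hand (the inequality then reads $1/(2q)! \le C\cdot 2^{-2q} q^{-2q} e^{2q}$, which is again immediate from Stirling applied to $(2q)!$); (ii) one must be sure that $q(1\pm t)$, which need not be integers unless $tq\in\mathbb{Z}$, are being interpreted correctly --- in the intended application $tq$ will be an integer, but if one wants the clean statement for all $q\in\mathbb{N}$ one can either restrict to such $q$ or invoke the Gamma-function version of Stirling, $\Gamma(x+1)\asymp \sqrt{x}(x/e)^x$ for $x\ge 1$, which covers non-integer arguments verbatim; (iii) verifying that the square-root prefactor $\sqrt{(1-t)/(1+t)}$ and the factors from the $e^{\theta}$ corrections are genuinely bounded by a constant independent of $q$ --- which they are, since for fixed $t$ they converge as $q\to\infty$ and are finite for each $q\ge 1$. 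None of these is deep; the proof is a routine but slightly fiddly application of Stirling, and the role of $C$ is precisely to swallow every subexponential factor.
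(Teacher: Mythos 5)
Your proposal is correct and follows essentially the same route as the paper: the paper likewise interprets $(q\pm tq)!$ as $\Gamma(q\pm tq+1)$, applies Stirling's asymptotic to numerator and denominator, and absorbs all subexponential factors into the $t$-dependent constant $C$. Your extra care with the $t=1$ case, non-integer arguments, and the square-root prefactor only makes explicit what the paper leaves implicit.
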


\begin{proof} We have $(q\pm t q )!=\Gamma (q\pm t q + 1)$, and as $q\rightarrow \infty $ \cite{AAR},
\begin{eqnarray*}
\frac{(q-t q)!}{(q+t q)!} &=&\frac{\Gamma (q-t q+1)}{\Gamma
(q+t q+1)}\leq C \frac{\left( q\left( 1-t \right) + 1\right)
^{q-t q+1/2}e^{-(q-t q+1)}}{\left( q\left( 1+t \right)+1
\right) ^{q+t q+1/2}e^{-(q+t q+1)}} \\
&\leq& C\left[ \frac{\left( 1-t \right) ^{(1-t )}}{\left(
1+t \right) ^{(1+t )}}\right] ^{q}q^{-2t q}e^{2t q}.
\end{eqnarray*}
\end{proof}

\begin{remark}
In the proofs that follow, we will be using derivatives and norms of
fractional order, as well as non-integer factorials. The corresponding error
estimates may be obtained by either classical interpolation arguments or by
the log-convexity of the $\Gamma $ function.
\end{remark}

\subsection{Error estimates}

\label{approx}

We begin with the following lemma, which provides an estimate for the
interpolation error.

\begin{lemma}
\label{lemma_approx}Let $u$ be the solution of (\ref{de}), (\ref{bc}) and
let $\mathcal{I}_{p}$ be the approximation operator of Theorem \ref{thm_sch}%
. \ Then there exists a constant $\sigma >0$, independent of $\varepsilon
_{1},\varepsilon _{2}$, such that%
\begin{equation*}
\left\Vert u-\mathcal{I}_{p}u\right\Vert _{E,I}\lesssim e^{-\sigma p}.
\end{equation*}
\end{lemma}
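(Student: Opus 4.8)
The plan is to treat each of the three regimes separately, using the corresponding regularity decomposition (Proposition~\ref{thm_reg1}, Proposition~\ref{thm_reg2}, or Proposition~\ref{thm_reg3}), and on each mesh element apply the one-dimensional $hp$ approximation operator $\mathcal{I}_p$ of Proposition~\ref{thm_sch} to $u$ directly. Since $\mathcal{I}_p$ matches endpoint values, the piecewise approximants glue into a function in $S_0(\kappa,p)\subset H_0^1(I)$, so $\|u-\mathcal{I}_p u\|_{E,I}^2 = \varepsilon_1|u-\mathcal{I}_p u|_{1,I}^2 + \|u-\mathcal{I}_p u\|_{0,I}^2$ splits as a sum over the (at most three) elements of $\Delta_{BL}(\kappa,p)$. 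On each element I would bound the local error using the decomposition $u = u_M + (\text{boundary-layer terms}) + r_M$ and the triangle inequality, handling the smooth part, the layer parts, and the remainder separately.

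First I would dispose of the trivial asymptotic range: if $\kappa p\mu_1^{-1}\ge 1/2$, i.e.\ the layers are not thin compared to the mesh, then $\varepsilon_1^{-1/2}$ (and the other layer scales) are bounded by a constant times $p$, so Proposition~\ref{thm_reg0} gives $\|u^{(n)}\|_{\infty,I}\lesssim (Cp)^n$ for $n\le p$; choosing $s \sim \lambda p$ in \eqref{sch_2}--\eqref{sch_3} and applying Lemma~\ref{stirling} yields geometric decay $e^{-\sigma p}$ by the standard analytic-regularity $hp$ argument (balancing the factor $[(1-t)^{1-t}/(1+t)^{1+t}]^q q^{-2tq}e^{2tq}$ against $(Cp)^{2s}$). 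In the genuine boundary-layer case, the mesh has an $O(\kappa p\mu_0^{-1})$ element at the left, an $O(\kappa p\mu_1^{-1})$ element at the right, and an $O(1)$ interior element. On the interior ``asymptotic'' element, the layer terms are exponentially small in $p$ (their arguments are $\ge \kappa p$, so $e^{-\kappa p}$ pointwise, with at most polynomial-in-$p$ factors from the Taylor-coefficient structure $\tilde b_k,\hat b_k$), and the smooth part $u_M$ has $\|u_M^{(n)}\|_{\infty,I}\lesssim n! K_1^n$ by \eqref{uM1bound}, a uniformly analytic function, for which \eqref{sch_2}--\eqref{sch_3} with $s$ a fixed fraction of $p$ and Lemma~\ref{stirling} again give $e^{-\sigma p}$. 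On each layer element, I would insert $u = u_M + \tilde u_M^{BL} + \hat u_M^{BL} + r_M$: the smooth part is again fine; the ``other'' layer (the one centred at the far endpoint) is exponentially small there; and for the ``matching'' layer one uses the derivative bound $|(\tilde u_M^{BL})^{(n)}(x)|\lesssim \tilde K^n\varepsilon_2^{-n}e^{-\mathrm{dist}(x,\partial I)/\varepsilon_2}$ (and analogously for the right layer with scale $\varepsilon_1/\varepsilon_2$, resp.\ $\sqrt{\varepsilon_1}$), combined with the fact that the element length is $\kappa p$ times that same layer width, so $(b-a)^{2s}\|(\tilde u_M^{BL})^{(s+1)}\|_{0,I}^2 \lesssim (\kappa p)^{2s} \tilde K^{2s+2}(\text{length})\,\varepsilon_2^{-1}$, and once more Lemma~\ref{stirling} with $s\sim t p$ produces geometric decay provided $\kappa$ (equivalently $t$) is chosen so that $\kappa^2 \tilde K^2 e^2 [(1-t)^{1-t}/(1+t)^{1+t}] < 1$.

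For the remainder $r_M$ one cannot differentiate freely, but Proposition~\ref{thm_reg1} (resp.\ \ref{thm_reg2}, \ref{thm_reg3}) already bounds $\|r_M\|_{0,I}+\varepsilon_1^{1/2}\|r_M'\|_{0,I}$ by $\max\{e^{-\delta\varepsilon_2/\varepsilon_1},e^{-\delta/\varepsilon_2}\}$; I would take $M_1,M_2$ (resp.\ $M$) proportional to $p$ as large as the provisos $4\varepsilon_2 e^2 M_1^2(\cdots)<1$, $\tfrac{\varepsilon_1}{\varepsilon_2^2}e^2 M_2(\cdots)<1$ permit, and then use the elementary observation that in the layer regime $\varepsilon_2\lesssim \kappa p\mu_0^{-1}\lesssim \kappa p\,\varepsilon_2/(\varepsilon_2+\varepsilon_1^{1/2})$ forces $\varepsilon_2$, $\varepsilon_1/\varepsilon_2$ (resp.\ $\sqrt{\varepsilon_1}$) to be $O(1/p)$ up to constants, so $M\sim p$ is admissible and the remainder bound becomes $e^{-\delta' p}$. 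Since $r_M$ need not satisfy the endpoint interpolation conditions, I would absorb it by approximating $r_M$ itself by, say, the linear interpolant on each element (or simply keep $u_M + u^{BL}_M$ for the polynomial part and note $\|r_M - \mathcal{I}_p r_M\|_{E,I}\le \|r_M\|_{E,I} + \|\mathcal{I}_p r_M\|_{E,I}$ with a crude stability bound), at the cost only of another geometric factor. Collecting the three element contributions and taking $\sigma$ to be the smallest of the exponents obtained gives $\|u-\mathcal{I}_p u\|_{E,I}\lesssim e^{-\sigma p}$.

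The main obstacle I anticipate is bookkeeping the interplay between the two parameters: one must verify, in each of the three regimes and on each layer element, that the element width is genuinely $O(\kappa p)$ times the relevant layer length scale (this is where the inequalities \eqref{mu_a} for $\mu_0,\mu_1$ are essential), and simultaneously that the provisos in the regularity propositions allow $M$ to grow linearly in $p$; getting the constant $\kappa$ small enough to beat $[(1-t)^{1-t}/(1+t)^{1+t}]$ uniformly across all three regimes, while keeping $\sigma>0$ independent of $\varepsilon_1,\varepsilon_2$, is the delicate point. Handling the non-interpolated remainder $r_M$ cleanly (so as not to lose the $H^1$-part, which carries the $\varepsilon_1^{1/2}$ weight) is a secondary technical nuisance that the weighted bound in \eqref{rM1bound} is designed to absorb.
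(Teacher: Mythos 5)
Your outline follows essentially the same route as the paper: the asymptotic case $\kappa p\mu_1^{-1}\geq 1/2$ is handled via Proposition \ref{thm_reg0}, the choice $s=\lambda p$ in Proposition \ref{thm_sch} and Lemma \ref{stirling}; in the pre-asymptotic case each term of the regime-dependent decomposition is approximated separately, with the smooth part and the \emph{matching} layer treated by $\mathcal{I}_p$ with $s$ a fixed fraction of $p$ on the element whose width is $\kappa p$ times the layer scale (using (\ref{mu_a}) and a suitably small $\kappa$ to beat the constant from Lemma \ref{stirling}), the \emph{far} layer by a linear interpolant together with its pointwise exponential smallness, and the energy-norm weight $\varepsilon_1^{1/2}$ absorbing the factor $\varepsilon_2^{-1/2}$ (resp.\ $(\varepsilon_1/\varepsilon_2)^{-1/2}$) since $\varepsilon_1<\varepsilon_2$. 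The only place your plan deviates is the remainder: taking $M\sim p$ is neither needed nor admissible --- with $\varepsilon_2\sim 1/p$ the proviso $4\varepsilon_2e^2M_1^2(\cdots)<1$ of Proposition \ref{thm_reg1} only permits $M_1\lesssim \varepsilon_2^{-1/2}\sim\sqrt{p}$ --- but this is harmless, because the bound (\ref{rM1bound}) does not depend on $M$, and, exactly as you also observe (and as the paper argues with $M$ fixed), the pre-asymptotic condition $\kappa p\mu_0^{-1}<1/2$ gives $1/\varepsilon_2\gtrsim 2\kappa p$ (and $\varepsilon_2/\varepsilon_1\gtrsim 1/\varepsilon_2$ in regime 1), so $\left\Vert r_M\right\Vert_{E,I}\lesssim e^{-\sigma\kappa p}$ directly, with no approximation of $r_M$ required.
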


\begin{proof} The proof is separated into two cases:

\textit{Case 1}: $\kappa p\mu _{1}^{-1}\geq 1/2$ (asymptotic case)

In this case the mesh consists of only one element and by Theorem \ref%
{thm_reg0}, there holds%
\begin{equation*}
\left\Vert u^{(n)}\right\Vert _{0,I}\lesssim K^{n}\max \left\{ n,\varepsilon
_{1}^{-1},\varepsilon _{2}^{-1}\right\} ^{n}=K^{n}\max \left\{ n,\varepsilon
_{1}^{-1}\right\} ^{n},
\end{equation*}%
since we assumed $\varepsilon _{1}<\varepsilon _{2}$. By Theorem \ref%
{thm_sch}, there exists $\mathcal{I}_{p}u\in \mathcal{P}_{p}(I)$ such that%
\begin{equation*}
\left\Vert u-\mathcal{I}_{p}u\right\Vert _{0,I}^{2}\lesssim \frac{1}{p^{2}}%
\frac{\left( p-s\right) !}{\left( p+s\right) !}\left\Vert
u^{(s+1)}\right\Vert _{0,I}^{2}\;,\;0\leq s\leq p.
\end{equation*}%
Choose $s=\lambda p$, with $\lambda \in (0,1)$ to be chosen shortly. Then%
\begin{equation*}
\left\Vert u-\mathcal{I}_{p}u\right\Vert _{0,I}^{2}\lesssim \frac{1}{p^{2}}%
\frac{\left( p-s\right) !}{\left( p+s\right) !}K^{2(\lambda p+1)}\max
\left\{ \lambda p+1,\varepsilon _{1}^{-1}\right\} ^{2(\lambda p+1)}
\end{equation*}%
and since $\kappa p\mu _{1}^{-1}\geq 1/2,$ we have $\kappa p\varepsilon
_{1}\geq 1/2$ by (\ref{mu_a}), and as a result 
\begin{equation*}
\max \left\{ \lambda p+1,\varepsilon _{1}^{-1}\right\} ^{2(\lambda
p+1)}=(\lambda p+1)^{2(\lambda p+1)},
\end{equation*}%
provided the constant $\kappa $ satisfies $\kappa \leq \lambda /2\,$. Lemma
\ref{stirling} gives
\begin{eqnarray*}
\left\Vert u-\mathcal{I}_{p}u\right\Vert _{0,I}^{2} &\lesssim &\frac{1}{p^{2}%
}\frac{\left( p-\lambda p\right) !}{\left( p+\lambda p\right) !}K^{2(\lambda
p+1)}(\lambda p+1)^{2(\lambda p+1)} \\
&\lesssim &\frac{1}{p^{2}}\left[ \frac{(1-\lambda )^{(1-\lambda )}}{%
(1+\lambda )^{(1+\lambda )}}\right] ^{p}e^{2\lambda p+1}K^{2(\lambda
p+1)}(\lambda p+1)^{2}\left( \frac{\lambda p+1}{p}\right) ^{2\lambda p} \\
&\lesssim &eK^{2}\left[ \frac{(1-\lambda )^{(1-\lambda )}}{(1+\lambda
)^{(1+\lambda )}}\left( eK\right) ^{2\lambda }\right] ^{p}\left( \frac{1}{p}%
+\lambda \right) ^{2\lambda p}.
\end{eqnarray*}%
Since $\left( \frac{1}{p}+\lambda \right) ^{2\lambda p}=\lambda ^{2\lambda p}%
\left[ \left( 1+\frac{1}{\lambda p}\right) ^{\lambda p}\right] ^{2}\leq
e^{2}\lambda ^{2\lambda p}$, we futher have%
\begin{equation*}
\left\Vert u-\mathcal{I}_{p}u\right\Vert _{0,I}^{2}\lesssim \left[ \frac{%
(1-\lambda )^{(1-\lambda )}}{(1+\lambda )^{(1+\lambda )}}\left( eK\lambda
\right) ^{2\lambda }\right] ^{p}
\end{equation*}%
If we choose $\lambda =(eK)^{-1}\in (0,1)$ then we obtain%
\begin{equation*}
\left\Vert u-\mathcal{I}_{p}u\right\Vert _{0,I}^{2}\lesssim \left[ \frac{%
(1-\lambda )^{(1-\lambda )}}{(1+\lambda )^{(1+\lambda )}}\right]
^{p}\lesssim e^{-\beta _{1}p},
\end{equation*}%
where%
\begin{equation*}
\beta _{1}=\left\vert \ln q_{1}\right\vert ,q_{1}=\frac{(1-\lambda
)^{(1-\lambda )}}{(1+\lambda )^{(1+\lambda )}}<1.
\end{equation*}%
We note that the choice of $\lambda $ implies that the constant $\kappa $ in
the definition of the mesh, satisfies $\kappa <\frac{1}{2eK}.$

Following the same steps as above and using Theorems \ref{thm_reg0} and \ref%
{thm_sch}, we may show 
\begin{equation*}
\left\Vert \left( u-\mathcal{I}_{p}u\right) ^{\prime }\right\Vert
_{0,I}^{2}\lesssim p^{2}e^{-\beta _{1}p},
\end{equation*}%
so that combining the two, gives the desired result (note that the $p^{2}$
term above may be absorbed into the exponential by adjusting the constants).

\textit{Case 2}: $\kappa p\mu _{0}^{-1}<1/2$ (pre-asymptotic case)

In this case the mesh is given by 
\begin{equation*}
\Delta _{BL}(\kappa ,p):=\{0,\kappa p\mu _{0}^{-1},1-\kappa p\mu
_{1}^{-1},1\},
\end{equation*}%
and the solution is decomposed based on the relationship between $%
\varepsilon _{1}$ and $\varepsilon _{2}.$ We will consider the first regime
(see Section \ref{regime1}) and note that the approximation for the other
two regimes (see Sections \ref{regime2} and \ref{regime3}) is analogous (see
also \cite{melenk}).

So we assume $\varepsilon _{1}<<\varepsilon _{2}^{2}<<1$ and we have the
decomposition (\ref{decomp1}):%
\begin{equation*}
u=u_{M}+\tilde{u}_{M}^{BL}+\hat{u}_{M}^{BL}+r_{M},
\end{equation*}%
with each term satisfying the bounds presented in Theorem \ref{thm_reg1}. We
will construct a different approximation for each part, using Theorem \ref%
{thm_sch}.

For the smooth part $u_{M},$ we have that there exists $\mathcal{I}%
_{p}u_{M}\in \mathcal{P}_{p}(I)$ such that 
\begin{equation*}
\left\Vert u_{M}-\mathcal{I}_{p}u_{M}\right\Vert _{0,I}^{2}+\left\Vert
\left( u_{M}-\mathcal{I}_{p}u_{M}\right) ^{\prime }\right\Vert
_{0,I}^{2}\lesssim \frac{\left( p-s\right) !}{\left( p+s\right) !}\left\Vert
u_{M}^{(s+1)}\right\Vert _{0,I}^{2}\;,\;0\leq s\leq p.
\end{equation*}%
Choose $s=\bar{\lambda}p$, with $\bar{\lambda}\in (0,1)$ to be chosen
shortly. Then, utilizing the estimate (\ref{uM1bound}) and Lemma \ref{stirling}, 
we arrive at
\begin{equation*}
\left\Vert u_{M}-\mathcal{I}_{p}u_{M}\right\Vert _{0,I}^{2}+\left\Vert
\left( u_{M}-\mathcal{I}_{p}u_{M}\right) ^{\prime }\right\Vert
_{0,I}^{2}\lesssim \frac{\left( p-s\right) !}{\left( p+s\right) !}K_{1}^{2(%
\bar{\lambda}p+1)}\left( \bar{\lambda}p+1\right) ^{2(\bar{\lambda}p+1)}
\end{equation*}%
\begin{eqnarray*}
&\lesssim &\left[ \frac{(1-\bar{\lambda})^{(1-\bar{\lambda})}}{(1+\bar{%
\lambda})^{(1+\bar{\lambda})}}\right] ^{p}e^{2\bar{\lambda}p+1}K_{1}^{2(\bar{%
\lambda}p+1)}(\bar{\lambda}p+1)^{2}\left( \frac{\bar{\lambda}p+1}{p}\right)
^{2\bar{\lambda}p} \\
&\lesssim &p^{2}eK_{1}^{2}\left[ \frac{(1-\bar{\lambda})^{(1-\bar{\lambda})}%
}{(1+\bar{\lambda})^{(1+\bar{\lambda})}}\left( eK_{1}\right) ^{2\bar{\lambda}%
}\right] ^{p}\left( \frac{1}{p}+\bar{\lambda}\right) ^{2\bar{\lambda}p} \\
&\lesssim &p^{2}eK_{1}^{2}\left[ \frac{(1-\bar{\lambda})^{(1-\bar{\lambda})}%
}{(1+\bar{\lambda})^{(1+\bar{\lambda})}}\left( eK_{1}\bar{\lambda}\right) ^{2%
\bar{\lambda}}\right] ^{p}.
\end{eqnarray*}%
Following the same reasoning as in \textit{Case 1} above, i.e. choosing 
$\bar{\lambda}=(eK_{1})^{-1}$ etc., we obtain 
\begin{equation*}
\left\Vert u_{M}-\mathcal{I}_{p}u_{M}\right\Vert _{E,I}\lesssim pe^{-\sigma
p}.
\end{equation*}

For the left boundary layer $\tilde{u}_{M}^{BL}$, we will construct
different approximations on the intervals%
\begin{equation*}
\tilde{I}_{1}=[0,\kappa p\mu _{0}^{-1}]\,,\,\tilde{I}_{2}=[\kappa p\mu
_{0}^{-1},1].
\end{equation*}%
On $\tilde{I}_{1},$ Theorem \ref{thm_sch} gives the existence of $\mathcal{I}%
_{p}\tilde{u}_{M}^{BL}\in \mathcal{P}_{p}(\tilde{I}_{1})$ such that%
\begin{equation*}
\left\Vert \left( \tilde{u}_{M}^{BL}-\mathcal{I}_{p}\tilde{u}%
_{M}^{BL}\right) ^{\prime }\right\Vert _{0,\tilde{I}_{1}}^{2}\lesssim \left(
\kappa p\mu _{0}^{-1}\right) ^{2s}\frac{\left( p-s\right) !}{\left(
p+s\right) !}\left\Vert \left( \tilde{u}_{M}^{BL}\right) ^{(s+1)}\right\Vert
_{0,\tilde{I}_{1}}^{2}\;,\;0\leq s\leq p.
\end{equation*}%
Choose $s=\tilde{\lambda}p$, with $\tilde{\lambda}\in (0,1)$ arbitrary.
Then, with the aid of Lemma \ref{stirling}, we have%
\begin{eqnarray*}
\left\Vert \left( \tilde{u}_{M}^{BL}-\mathcal{I}_{p}\tilde{u}%
_{M}^{BL}\right) ^{\prime }\right\Vert _{0,\tilde{I}_{1}}^{2} &\lesssim
&\left( \kappa p\mu _{0}^{-1}\right) ^{2\tilde{\lambda}p}\frac{\left( p-%
\tilde{\lambda}p\right) !}{\left( p+\tilde{\lambda}p\right) !}\left\Vert
\left( \tilde{u}_{M}^{BL}\right) ^{(\tilde{\lambda}p+1)}\right\Vert _{0,%
\tilde{I}_{1}}^{2} \\
&\lesssim &\left( \kappa p\mu _{0}^{-1}\right) ^{2\tilde{\lambda}p}\left[ 
\frac{(1-\tilde{\lambda})^{(1-\tilde{\lambda})}}{(1+\tilde{\lambda})^{(1+%
\tilde{\lambda})}}\right] ^{p}p^{-2\tilde{\lambda}p}e^{2\tilde{\lambda}%
p+1}\left\Vert \left( \tilde{u}_{M}^{BL}\right) ^{(\tilde{\lambda}%
p+1)}\right\Vert _{0,\tilde{I}_{1}}^{2}.
\end{eqnarray*}%
By (\ref{uBL1abound}),%
\begin{eqnarray*}
\left\Vert \left( \tilde{u}_{M}^{BL}\right) ^{(\tilde{\lambda}%
p+1)}\right\Vert _{0,\tilde{I}_{1}}^{2} &=&\int_{0}^{\kappa p\mu
_{0}^{-1}}\left( \tilde{u}_{M}^{BL}\right) ^{2(\tilde{\lambda}p+1)}(x)dx \\
&\lesssim &\int_{0}^{\kappa p\mu _{0}^{-1}}\tilde{K}^{2(\tilde{\lambda}%
p+1)}\varepsilon _{2}^{-2(\tilde{\lambda}p+1)}e^{-2dist(x,\partial
I)/\varepsilon _{2}}dx \\
&\lesssim &\kappa p\mu _{0}^{-1}\tilde{K}^{2(\tilde{\lambda}p+1)}\varepsilon
_{2}^{-2(\tilde{\lambda}p+1)},
\end{eqnarray*}%
so that%
\begin{eqnarray*}
\left\Vert \left( \tilde{u}_{M}^{BL}-\mathcal{I}_{p}\tilde{u}%
_{M}^{BL}\right) ^{\prime }\right\Vert _{0,\tilde{I}_{1}}^{2} &\lesssim
&\left( \kappa p\mu _{0}^{-1}\right) ^{2\tilde{\lambda}p}\left[ \frac{(1-%
\tilde{\lambda})^{(1-\tilde{\lambda})}}{(1+\tilde{\lambda})^{(1+\tilde{%
\lambda})}}\right] ^{p}p^{-2\tilde{\lambda}p}e^{2\tilde{\lambda}p+1}\kappa
p\mu _{0}^{-1}\tilde{K}^{2(\tilde{\lambda}p+1)}\varepsilon _{2}^{-2(\tilde{%
\lambda}p+1)} \\
&\lesssim &p\left( \mu _{0}^{-1}\right) ^{2\tilde{\lambda}p+1}\varepsilon
_{2}^{-2(\tilde{\lambda}p+1)}\left[ \frac{(1-\tilde{\lambda})^{(1-\tilde{%
\lambda})}}{(1+\tilde{\lambda})^{(1+\tilde{\lambda})}}\right] ^{p}\left(
\kappa e\tilde{K}\right) ^{2\tilde{\lambda}p} \\
&\lesssim &p\varepsilon _{2}^{-1}\left( \mu _{0}\varepsilon _{2}\right) ^{-(2%
\tilde{\lambda}p+1)}\left[ \frac{(1-\tilde{\lambda})^{(1-\tilde{\lambda})}}{%
(1+\tilde{\lambda})^{(1+\tilde{\lambda})}}\right] ^{p},
\end{eqnarray*}%
by the choice of $\kappa <1/(e\tilde{K}).$ Since by (\ref{mu_a}) in this
regime there holds $\mu _{0}\varepsilon _{2}\lesssim 1$, we get%
\begin{equation*}
\left\Vert \left( \tilde{u}_{M}^{BL}-\mathcal{I}_{p}\tilde{u}%
_{M}^{BL}\right) ^{\prime }\right\Vert _{0,\tilde{I}_{1}}\lesssim \sqrt{p}%
\varepsilon _{2}^{-1/2}e^{-\beta _{2}p},
\end{equation*}%
with 
\begin{equation*}
\beta _{2}=\left\vert \ln q_{2}\right\vert ,q_{2}=\frac{(1-\tilde{\lambda}%
)^{(1-\tilde{\lambda})}}{(1+\tilde{\lambda})^{(1+\tilde{\lambda})}}<1.
\end{equation*}%
On the interval $\tilde{I}_{2}=[\kappa p\mu _{0}^{-1},1]$, we approximate $%
\tilde{u}_{M}^{BL}$ by its linear interpolant $\mathcal{I}_{1}\tilde{u}%
_{M}^{BL}$ and we have%
\begin{eqnarray*}
\left\Vert \left( \tilde{u}_{M}^{BL}-\mathcal{I}_{1}\tilde{u}%
_{M}^{BL}\right) ^{\prime }\right\Vert _{0,\tilde{I}_{2}}^{2} &\lesssim
&\left\Vert \left( \tilde{u}_{M}^{BL}\right) ^{\prime }\right\Vert _{0,%
\tilde{I}_{2}}^{2}+\left\Vert \left( \mathcal{I}_{1}\tilde{u}%
_{M}^{BL}\right) ^{\prime }\right\Vert _{0,\tilde{I}_{2}}^{2} \\
&\lesssim &\int_{\kappa p\mu _{0}^{-1}}^{1}\left[ \left( \tilde{u}%
_{M}^{BL}\right) ^{\prime }(x)\right] ^{2}dx\lesssim \int_{\kappa p\mu
_{0}^{-1}}^{1}\varepsilon _{2}^{-2}e^{-2dist(x,\partial I)/\varepsilon
_{2}}dx \\
&\lesssim &\varepsilon _{2}^{-1}e^{-2\kappa p\mu _{0}^{-1}/\varepsilon _{2}}
\\
&\lesssim &\varepsilon _{2}^{-1}e^{-2\kappa p},
\end{eqnarray*}%
by (\ref{mu_a}). Therefore,%
\begin{equation*}
\left\Vert \left( \tilde{u}_{M}^{BL}-\mathcal{I}_{p}\tilde{u}%
_{M}^{BL}\right) ^{\prime }\right\Vert _{0,I}\lesssim \varepsilon
_{2}^{-1/2}e^{-\sigma p},
\end{equation*}%
for some $\sigma >0$, independent of $\varepsilon _{1},\varepsilon _{2}.$
Repeating the argument for the $L^{2}$ norm of the error and using the
definition of the energy norm, we get%
\begin{eqnarray*}
\left\Vert \left( \tilde{u}_{M}^{BL}-\mathcal{I}_{p}\tilde{u}%
_{M}^{BL}\right) \right\Vert _{E,I} &\lesssim &\varepsilon
_{1}^{1/2}\left\Vert \left( \tilde{u}_{M}^{BL}-\mathcal{I}_{p}\tilde{u}%
_{M}^{BL}\right) ^{\prime }\right\Vert _{0,I}+\left\Vert \left( \tilde{u}%
_{M}^{BL}-\mathcal{I}_{p}\tilde{u}_{M}^{BL}\right) \right\Vert _{0,I} \\
&\lesssim &\varepsilon _{1}^{1/2}\varepsilon _{2}^{-1/2}e^{-\sigma
p}+e^{-\sigma p} \\
&\lesssim &e^{-\sigma p},
\end{eqnarray*}%
since $\varepsilon _{1}^{1/2}\varepsilon _{2}^{-1/2}=O(1)$ due to $%
\varepsilon _{1}<\varepsilon _{2}.$

For the right boundary layer $\hat{u}_{M}^{BL}$, we will construct different
approximations on the intervals%
\begin{equation*}
\hat{I}_{1}=[0,1-\kappa p\mu _{1}^{-1}]\,,\,\hat{I}_{2}=[1-\kappa p\mu
_{1}^{-1},1].
\end{equation*}%
The steps are the same as for the left boundary layer. On $\hat{I}_{1}$ we
use the linear interpolant $\mathcal{I}_{1}\hat{u}_{M}^{BL}$ for the
approximation, getting with the help of (\ref{uBL1bbound}),%
\begin{eqnarray*}
\left\Vert \left( \hat{u}_{M}^{BL}-\mathcal{I}_{1}\hat{u}_{M}^{BL}\right)
^{\prime }\right\Vert _{0,\hat{I}_{1}}^{2} &\lesssim &\left\Vert \left( \hat{%
u}_{M}^{BL}\right) ^{\prime }\right\Vert _{0,\hat{I}_{1}}^{2}+\left\Vert
\left( \mathcal{I}_{1}\hat{u}_{M}^{BL}\right) ^{\prime }\right\Vert _{0,\hat{%
I}_{1}}^{2}\lesssim \int_{0}^{1-\kappa p\mu _{1}^{-1}}\left[ \left( \hat{u}%
_{M}^{BL}\right) ^{\prime }\right] ^{2} \\
&\lesssim &\int_{0}^{1-\kappa p\mu _{1}^{-1}}K_{2}^{2}\left( \frac{%
\varepsilon _{1}}{\varepsilon _{2}}\right) ^{-2}e^{-2dist(x,\partial
I)\varepsilon _{2}/\varepsilon _{1}}dx \\
&\lesssim &\kappa p\mu _{1}^{-1}\left( \frac{\varepsilon _{1}}{\varepsilon
_{2}}\right) ^{-2}e^{-2\kappa p\mu _{1}^{-1}\varepsilon _{2}/\varepsilon
_{1}} \\
&\lesssim &pe^{-2\kappa p},
\end{eqnarray*}%
where we used (\ref{mu_a}). On $\hat{I}_{2}$, we have by Theorem \ref%
{thm_sch} that there exists $\mathcal{I}_{p}\hat{u}_{M}^{BL}\in \mathcal{P}%
_{p}(\hat{I}_{2})$ such that 
\begin{equation*}
\left\Vert \left( \hat{u}_{M}^{BL}-\mathcal{I}_{p}\hat{u}_{M}^{BL}\right)
^{\prime }\right\Vert _{0,\hat{I}_{2}}^{2}\lesssim \left( \kappa p\mu
_{1}^{-1}\right) ^{2s}\frac{\left( p-s\right) !}{\left( p+s\right) !}%
\left\Vert \left( \hat{u}_{M}^{BL}\right) ^{(s+1)}\right\Vert _{0,\hat{I}%
_{2}}^{2}\;,\;0\leq s\leq p.
\end{equation*}%
Choose $s=\hat{\lambda}p$, with $\hat{\lambda}\in (0,1)$ arbitrary. Then,
with the aid of Lemma \ref{stirling}, we have%
\begin{eqnarray*}
\left\Vert \left( \hat{u}_{M}^{BL}-\mathcal{I}_{p}\hat{u}_{M}^{BL}\right)
^{\prime }\right\Vert _{0,\hat{I}_{2}}^{2} &\lesssim &\left( \kappa p\mu
_{1}^{-1}\right) ^{2\hat{\lambda}p}\frac{\left( p-\hat{\lambda}p\right) !}{%
\left( p+\hat{\lambda}p\right) !}\left\Vert \left( \hat{u}_{M}^{BL}\right)
^{(\hat{\lambda}p+1)}\right\Vert _{0,\hat{I}_{2}}^{2} \\
&\lesssim &\left( \kappa p\mu _{1}^{-1}\right) ^{2\hat{\lambda}p}\left[ 
\frac{(1-\hat{\lambda})^{(1-\hat{\lambda})}}{(1+\hat{\lambda})^{(1+\hat{%
\lambda})}}\right] ^{p}p^{-2\hat{\lambda}p}e^{2\hat{\lambda}p+1}\left\Vert
\left( \hat{u}_{M}^{BL}\right) ^{(\hat{\lambda}p+1)}\right\Vert _{0,\hat{I}%
_{2}}^{2}.
\end{eqnarray*}%
By (\ref{uBL1bbound}),%
\begin{eqnarray*}
\left\Vert \left( \hat{u}_{M}^{BL}\right) ^{(\hat{\lambda}p+1)}\right\Vert
_{0,\hat{I}_{2}}^{2} &=&\int_{1-\kappa p\mu _{1}^{-1}}^{1}\left( \hat{u}%
_{M}^{BL}\right) ^{2(\hat{\lambda}p+1)} \\
&\lesssim &\int_{1-\kappa p\mu _{1}^{-1}}^{1}\hat{K}^{2(\hat{\lambda}%
p+1)}\left( \frac{\varepsilon _{1}}{\varepsilon _{2}}\right) ^{-2(\hat{%
\lambda}p+1)}e^{-2dist(x,\partial I)\varepsilon _{2}/\varepsilon _{1}}dx \\
&\lesssim &\hat{K}^{2(\hat{\lambda}p+1)}\left( \frac{\varepsilon _{1}}{%
\varepsilon _{2}}\right) ^{-2\hat{\lambda}p+1}
\end{eqnarray*}%
so that%
\begin{eqnarray*}
\left\Vert \left( \hat{u}_{M}^{BL}-\mathcal{I}_{p}\hat{u}_{M}^{BL}\right)
^{\prime }\right\Vert _{0,\hat{I}_{2}}^{2} &\lesssim &\left( \kappa p\mu
_{1}^{-1}\right) ^{2\hat{\lambda}p}\left[ \frac{(1-\hat{\lambda})^{(1-\hat{%
\lambda})}}{(1+\hat{\lambda})^{(1+\hat{\lambda})}}\right] ^{p}p^{-2\hat{%
\lambda}p}e^{2\hat{\lambda}p+1}\hat{K}^{2(\hat{\lambda}p+1)}\left( \frac{%
\varepsilon _{1}}{\varepsilon _{2}}\right) ^{-2\hat{\lambda}p+1} \\
&\lesssim &\left( \mu _{1}^{-1}\right) ^{2\hat{\lambda}p}\left( \frac{%
\varepsilon _{1}}{\varepsilon _{2}}\right) ^{-2\hat{\lambda}p+1}\left[ \frac{%
(1-\hat{\lambda})^{(1-\hat{\lambda})}}{(1+\hat{\lambda})^{(1+\hat{\lambda})}}%
\right] ^{p}\left( \kappa e\hat{K}\right) ^{2\hat{\lambda}p} \\
&\lesssim &\frac{\varepsilon _{1}}{\varepsilon _{2}}\left( \mu _{1}^{-1}%
\frac{\varepsilon _{2}}{\varepsilon _{1}}\right) ^{2\hat{\lambda}p}\left[ 
\frac{(1-\hat{\lambda})^{(1-\hat{\lambda})}}{(1+\hat{\lambda})^{(1+\hat{%
\lambda})}}\right] ^{p}.
\end{eqnarray*}%
Since in this regime there holds $\mu _{1}^{-1}\frac{\varepsilon _{2}}{%
\varepsilon _{1}}\lesssim 1$ by (\ref{mu_a}), we get%
\begin{equation}
\left\Vert \left( \hat{u}_{M}^{BL}-\mathcal{I}_{p}\hat{u}_{M}^{BL}\right)
^{\prime }\right\Vert _{0,\hat{I}_{2}}^{2}\lesssim \frac{\varepsilon _{1}}{%
\varepsilon _{2}}e^{-\beta _{3}p},  \label{LeqNo}
\end{equation}%
with 
\begin{equation*}
\beta _{3}=\left\vert \ln q_{3}\right\vert ,q_{3}=\frac{(1-\hat{\lambda}%
)^{(1-\hat{\lambda})}}{(1+\hat{\lambda})^{(1+\hat{\lambda})}}<1.
\end{equation*}
For the $L^{2}$ error, we have in an analogous fashion%
\begin{equation*}
\left\Vert \hat{u}_{M}^{BL}-\mathcal{I}_{p}\hat{u}_{M}^{BL}\right\Vert _{0,%
\hat{I}_{2}}\lesssim e^{-\sigma p},
\end{equation*}
so that the above considerations yield
\begin{equation*}
\left\Vert \hat{u}_{M}^{BL}-\mathcal{I}_{p}\hat{u}_{M}^{BL}\right\Vert
_{E,I}\lesssim \left( \varepsilon_1 (\varepsilon_1^{1/2} / \varepsilon_2^{1/2}) +1 \right) e^{-\sigma p} \lesssim e^{-\sigma p},
\end{equation*}%
with $\sigma >0$ a constant independent of $\varepsilon _{1},\varepsilon _{2}
$.

We finally consider the remainder, $r_{M}$ which satisfies (\ref{rM1bound}),
or equivalently%
\begin{equation*}
\left\Vert r_{M}\right\Vert _{E,I}\lesssim \max \{e^{-\delta \varepsilon
_{2}/\varepsilon _{1}},e^{-\delta /\varepsilon _{2}}\}\lesssim e^{-\delta
/\varepsilon _{2}},
\end{equation*}%
due to $\varepsilon _{2}^{2}>>\varepsilon _{1}.$ Since the remainder is
already exponentially small, it will not be approximated and we note that $%
\kappa p\mu _{0}^{-1}<1/2$ implies $\kappa p\varepsilon _{2}<1/2$, hence%
\begin{equation*}
\left\Vert r_{M}\right\Vert _{E,I}\lesssim e^{-\sigma \kappa p},
\end{equation*}%
with $\sigma >0$ a constant independent of $\varepsilon _{1},\varepsilon
_{2} $. Combining all the above we obtain the desired result. \end{proof}

We next estimate the error between the finite element solution $u_{FEM}$ and
the interpolant $\mathcal{I}_{p}u$.

\begin{lemma}
\label{lemma_approx2}Let $u$ be the solution of (\ref{de})--(\ref{bc}), $%
u_{FEM}\in S_{0}(\kappa ,p)$ be its approximation based on the Spectral
Boundary Layer Mesh, and let $\mathcal{I}_{p}$ be the approximation operator
of Theorem \ref{thm_sch}. Then there exists a constant $\sigma >0$,
independent of $\varepsilon _{1},\varepsilon _{2}$, such that%
\begin{equation*}
\left\Vert \mathcal{I}_{p}u-u_{FEM}\right\Vert _{E,I}\lesssim e^{-\sigma p}.
\end{equation*}
\end{lemma}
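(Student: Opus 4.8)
The plan is the standard coercivity‑plus‑Galerkin‑orthogonality argument, with all of the real work hidden in the convection term. Put $z:=\mathcal{I}_{p}u-u_{FEM}$. After, if necessary, subtracting from $\mathcal{I}_{p}u$ an exponentially small linear function so that it vanishes at the endpoints, $\mathcal{I}_{p}u$ is a continuous piecewise polynomial of degree $\le p$ on $\Delta_{BL}(\kappa,p)$ vanishing on $\partial I$, so $z\in S_{0}(\kappa,p)\subset H_{0}^{1}(I)$. Subtracting (\ref{discrete}) from (\ref{BuvFv}) gives $\mathcal{B}(u-u_{FEM},w)=0$ for all $w\in S_{0}(\kappa,p)$; taking $w=z$ and using coercivity (\ref{coercivity}),
\[
\|z\|_{E,I}^{2}\le\mathcal{B}(z,z)=\mathcal{B}(\mathcal{I}_{p}u-u,z).
\]
Writing $\eta:=u-\mathcal{I}_{p}u$, it then suffices to prove $|\mathcal{B}(\eta,z)|\lesssim e^{-\sigma p}\,\|z\|_{E,I}$ and divide by $\|z\|_{E,I}$.

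Expanding $\mathcal{B}(\eta,z)=\varepsilon_{1}\langle\eta',z'\rangle+\varepsilon_{2}\langle b\eta',z\rangle+\langle c\eta,z\rangle$, the diffusion and reaction terms are immediate: $|\varepsilon_{1}\langle\eta',z'\rangle|\le\|\eta\|_{E,I}\|z\|_{E,I}$ and $|\langle c\eta,z\rangle|\le\|c\|_{\infty,I}\|\eta\|_{E,I}\|z\|_{E,I}$, which are $\lesssim e^{-\sigma p}\|z\|_{E,I}$ by Lemma \ref{lemma_approx}. For the convection term I would split $\eta$ exactly as in the proof of Lemma \ref{lemma_approx}, $\eta=(u_{M}-\mathcal{I}_{p}u_{M})+(\tilde{u}_{M}^{BL}-\mathcal{I}_{p}\tilde{u}_{M}^{BL})+(\hat{u}_{M}^{BL}-\mathcal{I}_{p}\hat{u}_{M}^{BL})+r_{M}$, and treat the three interpolated parts separately from the un‑approximated remainder $r_{M}$. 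For each interpolated part $\zeta$ one uses $|\varepsilon_{2}\langle b\zeta',z\rangle|\le\|b\|_{\infty,I}\,(\varepsilon_{2}\|\zeta'\|_{0,I})\,\|z\|_{0,I}$ and checks, going back into the proof of Lemma \ref{lemma_approx}, that $\varepsilon_{2}\|\zeta'\|_{0,I}\lesssim e^{-\sigma p}$: this holds for the smooth part since $\|(u_{M}-\mathcal{I}_{p}u_{M})'\|_{0,I}\lesssim p\,e^{-\sigma p}$ and $\varepsilon_{2}\le1$; for the left layer the $H^{1}$‑interpolation factor $\varepsilon_{2}^{-1/2}$ is absorbed by the prefactor $\varepsilon_{2}$; and for the right layer the corresponding factor is $(\varepsilon_{1}/\varepsilon_{2})^{1/2}$, with $\varepsilon_{2}(\varepsilon_{1}/\varepsilon_{2})^{1/2}=(\varepsilon_{1}\varepsilon_{2})^{1/2}\le1$ (the linear piece on $[0,1-\kappa p\mu_{1}^{-1}]$ being even easier).

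The main obstacle is the remainder contribution $\varepsilon_{2}\langle br_{M}',z\rangle$, since (\ref{rM1bound}) only controls $\varepsilon_{1}^{1/2}\|r_{M}'\|_{0,I}$ and a naive estimate would introduce the possibly unbounded factor $\varepsilon_{2}\varepsilon_{1}^{-1/2}$. The remedy is to eliminate $r_{M}'$ by using that $r_{M}$ solves the model equation (\ref{de}): integrating by parts in the diffusion part of $\mathcal{B}(r_{M},z)$ and using $z(0)=z(1)=0$ gives $\mathcal{B}(r_{M},z)=\langle-\varepsilon_{1}r_{M}''+\varepsilon_{2}br_{M}'+cr_{M},z\rangle=\langle g_{M},z\rangle$, where $g_{M}$ is the residual left by the truncated asymptotic expansion, which is exponentially small in $L^{2}(I)$ by the construction of the expansion; hence $|\mathcal{B}(r_{M},z)|\le\|g_{M}\|_{0,I}\|z\|_{0,I}\lesssim e^{-\delta/\varepsilon_{2}}\|z\|_{E,I}$, and in the pre‑asymptotic case the mesh condition $\kappa p\mu_{0}^{-1}<1/2$ together with (\ref{mu_a}) forces $\kappa p\varepsilon_{2}\lesssim1$, so $e^{-\delta/\varepsilon_{2}}\lesssim e^{-\sigma\kappa p}$. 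Adding the three contributions yields $|\mathcal{B}(\eta,z)|\lesssim e^{-\sigma p}\|z\|_{E,I}$, hence $\|z\|_{E,I}\lesssim e^{-\sigma p}$. The regimes of Sections \ref{regime2} and \ref{regime3}, and the single‑element asymptotic case, are handled identically, using Propositions \ref{thm_reg2} and \ref{thm_reg3} (respectively Proposition \ref{thm_reg0}) in place of Proposition \ref{thm_reg1}.
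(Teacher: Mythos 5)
Your overall framework (coercivity, Galerkin orthogonality, reduction to bounding $\mathcal{B}(u-\mathcal{I}_{p}u,z)$, with the convection term as the only delicate piece) is the same as the paper's, and your handling of the diffusion/reaction terms, the smooth part, the left layer and the asymptotic one-element case is fine. The genuine gap is in the right boundary layer contribution to the convection term. You estimate it by $\varepsilon_{2}\|(\hat{u}_{M}^{BL}-\mathcal{I}_{p}\hat{u}_{M}^{BL})^{\prime}\|_{0,I}\|z\|_{0,I}$ and claim the $H^{1}$ factor for this layer is $(\varepsilon_{1}/\varepsilon_{2})^{1/2}$. That scaling is wrong: by (\ref{uBL1bbound}) the layer satisfies $|(\hat{u}_{M}^{BL})^{\prime}|\sim(\varepsilon_{1}/\varepsilon_{2})^{-1}$ on the fine element $[1-\kappa p\mu_{1}^{-1},1]$, whose length is $\sim\kappa p\,\varepsilon_{1}/\varepsilon_{2}$, so $\|(\hat{u}_{M}^{BL})^{\prime}\|_{0,[1-\kappa p\mu_{1}^{-1},1]}\sim(\varepsilon_{2}/\varepsilon_{1})^{1/2}$, and the interpolation error in the $H^{1}$ seminorm inherits exactly this prefactor (a scaling argument to the reference element shows it cannot be improved by a power of $\varepsilon_{1}/\varepsilon_{2}$); only the exponential factor $e^{-\sigma p}$ is gained. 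Hence your key quantity is $\varepsilon_{2}\,(\varepsilon_{2}/\varepsilon_{1})^{1/2}e^{-\sigma p}=\varepsilon_{2}^{3/2}\varepsilon_{1}^{-1/2}e^{-\sigma p}$, and the prefactor is not uniformly bounded in the regime $\varepsilon_{1}\ll\varepsilon_{2}^{2}$ (it blows up, e.g., for $\varepsilon_{2}$ fixed and $\varepsilon_{1}\rightarrow0$; nothing in the regime prevents $\varepsilon_{1}\ll\varepsilon_{2}^{3}$), and being independent of $p$ it cannot be absorbed into the exponential. This is precisely why the paper does \emph{not} estimate the convection term directly in the pre-asymptotic range: it integrates by parts, $\varepsilon_{2}\langle b(u-\mathcal{I}_{p}u)^{\prime},\xi\rangle\rightarrow\varepsilon_{2}\langle b(u-\mathcal{I}_{p}u),\xi^{\prime}\rangle$, splits over the three mesh elements, and on $[1-\kappa p\mu_{1}^{-1},1]$ uses $\|\xi^{\prime}\|_{0}\leq\varepsilon_{1}^{-1/2}\|\xi\|_{E,I}$ together with the $L^{2}$ smallness of the error there (which \emph{does} carry the favorable factor $(\varepsilon_{1}/\varepsilon_{2})^{1/2}$ coming from the small element), so that $\varepsilon_{2}\varepsilon_{1}^{-1/2}(\varepsilon_{1}/\varepsilon_{2})^{1/2}=\varepsilon_{2}^{1/2}\leq1$; on the coarse element near $x=0$ an inverse inequality plays the analogous role. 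Your argument needs this (or an equivalent) mechanism; as written, the right-layer step fails.

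Two smaller remarks. Your treatment of the remainder differs from the paper: the paper simply keeps $r_{M}$ inside $u-\mathcal{I}_{p}u$ and uses the $L^{2}$ bound of (\ref{rM1bound}) together with $\kappa p\varepsilon_{2}<1/2$, whereas you rewrite $\mathcal{B}(r_{M},z)=\langle -\varepsilon_{1}r_{M}^{\prime\prime}+\varepsilon_{2}br_{M}^{\prime}+cr_{M},z\rangle$ and invoke exponential smallness of the expansion residual in $L^{2}$; that is a legitimate (and in fact cleaner) route, but the residual bound is not stated in this paper and would have to be quoted from the regularity reference \cite{IX} rather than asserted. Your observation that $\mathcal{I}_{p}u$ must be corrected by an exponentially small linear function to lie in $S_{0}(\kappa,p)$ (since $r_{M}$ does not vanish on $\partial I$) is a point the paper glosses over and is handled correctly by you.
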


\begin{proof} By coercivity of the bilinear form ${\mathcal{B}}_{\varepsilon }
$ (eq. (\ref{coercivity})), there holds with $\xi :=\mathcal{I}_{p}u-u_{FEM},
$%
\begin{equation*}
\left\Vert \xi \right\Vert _{E,I}^{2}\leq {\mathcal{B}}_{\varepsilon }\left(
\xi ,\xi \right) =-{\mathcal{B}}_{\varepsilon }\left( u-\mathcal{I}_{p}u,\xi
\right) ,
\end{equation*}%
where we also used Galerkin orthogonality. Hence%
\begin{equation*}
\left\Vert \xi \right\Vert _{E,I}^{2}\leq -\varepsilon _{1}\left\langle
\left( u-\mathcal{I}_{p}u\right) ^{\prime },\xi ^{\prime }\right\rangle
_{I}-\varepsilon _{2}\left\langle b\left( u-\mathcal{I}_{p}u\right) ^{\prime
},\xi \right\rangle _{I}-\left\langle c\left( u-\mathcal{I}_{p}u\right) ,\xi
\right\rangle _{I}.
\end{equation*}%
The first and last term may be estimated using Cauchy Schwarz:%
\begin{equation*}
\left\vert -\varepsilon _{1}\left\langle \left( u-\mathcal{I}_{p}u\right)
^{\prime },\xi ^{\prime }\right\rangle _{I}\right\vert +\left\vert
\left\langle c\left( u-\mathcal{I}_{p}u\right) ,\xi \right\rangle
_{I}\right\vert \lesssim \varepsilon _{1}\left\Vert \left( u-\mathcal{I}%
_{p}u\right) ^{\prime }\right\Vert _{0,I}\left\Vert \xi ^{\prime
}\right\Vert _{0,I}+
\end{equation*}%
\begin{equation*}
+\left\Vert c\right\Vert _{\infty ,I}\left\Vert u-\mathcal{I}%
_{p}u\right\Vert _{0,I}\left\Vert \xi \right\Vert _{0,I}\lesssim \max
\{1,\left\Vert c\right\Vert _{\infty ,I}\}\left\Vert u-\mathcal{I}%
_{p}u\right\Vert _{E,I}\left\Vert \xi \right\Vert _{E,I}.
\end{equation*}%
For the second term, we will consider the two ranges of $p$ separately: in
the asymptotic range of $p$, i.e. $\kappa p\mu _{1}^{-1}\geq 1/2$ or
equivalently $\kappa p\varepsilon _{1}\geq 1/2$, we have%
\begin{eqnarray*}
\left\vert \varepsilon _{2}\left\langle b\left( u-\mathcal{I}_{p}u\right)
^{\prime },\xi \right\rangle _{I}\right\vert  &\lesssim &\varepsilon
_{2}\left\Vert b\right\Vert _{\infty ,I}\left\Vert \left( u-\mathcal{I}%
_{p}u\right) ^{\prime }\right\Vert _{0,I}\left\Vert \xi \right\Vert _{0,I} \\
&\lesssim &\varepsilon _{2}\varepsilon _{1}^{-1/2}\left\Vert u-\mathcal{I}%
_{p}u\right\Vert _{E,I}\left\Vert \xi \right\Vert _{E,I} \\
&\lesssim &\varepsilon _{2}\left( \kappa p\right) ^{1/2}\left\Vert u-%
\mathcal{I}_{p}u\right\Vert _{E,I}\left\Vert \xi \right\Vert _{E,I} \\
&\lesssim &e^{-\sigma p}\left\Vert \xi \right\Vert _{E,I}
\end{eqnarray*}%
In the pre-asymptotic range of $p$, i.e. $\kappa p\mu _{0}^{-1}<1/2$, we
first use integration by parts to obtain%
\begin{equation*}
\left\vert \varepsilon _{2}\left\langle b\left( u-\mathcal{I}_{p}u\right)
^{\prime },\xi \right\rangle _{I}\right\vert =\left\vert \varepsilon
_{2}\left\langle b\left( u-\mathcal{I}_{p}u\right) ,\xi ^{\prime
}\right\rangle _{I}\right\vert .
\end{equation*}%
Next, we consider the three intervals of the \emph{Spectral Boundary Layer}
mesh 
\begin{equation*}
\lbrack 0,\kappa p\mu _{0}^{-1}]\cup \lbrack \kappa p\mu _{0}^{-1},1-\kappa
p\mu _{1}^{-1}]\cup \lbrack 1-\kappa p\mu _{1}^{-1},1].
\end{equation*}%
On the first subinterval we have 
\begin{eqnarray*}
\left\vert \varepsilon _{2}\left\langle b\left( u-\mathcal{I}_{p}u\right)
,\xi ^{\prime }\right\rangle _{[0,\kappa p\mu _{0}^{-1}]}\right\vert 
&\lesssim &\varepsilon _{2}\left\Vert b\right\Vert _{\infty ,[0,\kappa p\mu
_{0}^{-1}]}\left\vert \left\langle u-\mathcal{I}_{p}u,\xi ^{\prime
}\right\rangle _{[0,\kappa p\mu _{0}^{-1}]}\right\vert  \\
&\lesssim &\varepsilon _{2}\left\Vert u-\mathcal{I}_{p}u\right\Vert
_{0,[0,\kappa p\mu _{0}^{-1}]}\left\Vert \xi ^{\prime }\right\Vert
_{0,[0,\kappa p\mu _{0}^{-1}]} \\
&\lesssim &\frac{\varepsilon _{2}}{\kappa p\mu _{0}^{-1}}\left\Vert u-%
\mathcal{I}_{p}u\right\Vert _{0,[0,\kappa p\mu _{0}^{-1}]}\left\Vert \xi
\right\Vert _{0,[0,\kappa p\mu _{0}^{-1}]} \\
&\lesssim &\frac{\varepsilon _{2}\mu _{0}}{\kappa p}\left\Vert u-\mathcal{I}%
_{p}u\right\Vert _{0,[0,\kappa p\mu _{0}^{-1}]}\left\Vert \xi \right\Vert
_{0,[0,\kappa p\mu _{0}^{-1}]}
\end{eqnarray*}%
where we used an inverse inequality (see, e.g. \cite[Thm. 3.91]{schwab}). \
Thus, (\ref{mu_a}) and Lemma \ref{lemma_approx} give%
\begin{equation*}
\left\vert \varepsilon _{2}\left\langle b\left( u-\mathcal{I}_{p}u\right)
,\xi ^{\prime }\right\rangle _{[0,\kappa p\mu _{0}^{-1}]}\right\vert
\lesssim e^{-\beta p}\left\Vert \xi \right\Vert _{E,I}.
\end{equation*}%
Similarly, on the second subinterval we have%
\begin{eqnarray*}
\left\vert \varepsilon _{2}\left\langle b\left( u-\mathcal{I}_{p}u\right)
,\xi ^{\prime }\right\rangle _{[\kappa p\mu _{0}^{-1},1-\kappa p\mu
_{1}^{-1}]}\right\vert  &\lesssim &\varepsilon _{2}\left\Vert b\right\Vert
_{\infty ,[\kappa p\mu _{0}^{-1},1-\kappa p\mu _{1}^{-1}]}\left\vert
\left\langle u-\mathcal{I}_{p}u,\xi ^{\prime }\right\rangle _{[\kappa p\mu
_{0}^{-1},1-\kappa p\mu _{1}^{-1}]}\right\vert  \\
&\lesssim &\varepsilon _{2}\left\Vert u-\mathcal{I}_{p}u\right\Vert
_{0,[\kappa p\mu _{0}^{-1},1-\kappa p\mu _{1}^{-1}]}\left\Vert \xi
\right\Vert _{0,[\kappa p\mu _{0}^{-1},1-\kappa p\mu _{1}^{-1}]} \\
&\lesssim &e^{-\beta p}\left\Vert \xi \right\Vert _{E,I}.
\end{eqnarray*}%
Finally, on the third subinterval%
\begin{eqnarray*}
\left\vert \varepsilon _{2}\left\langle b\left( u-\mathcal{I}_{p}u\right)
,\xi ^{\prime }\right\rangle _{[1-\kappa p\mu _{1}^{-1},1]}\right\vert 
&\lesssim &\varepsilon _{2}\left\Vert b\right\Vert _{\infty ,[1-\kappa p\mu
_{1}^{-1},1]}\left\vert \left\langle u-\mathcal{I}_{p}u,\xi ^{\prime
}\right\rangle _{[1-\kappa p\mu _{1}^{-1},1]}\right\vert  \\
&\lesssim &\frac{\varepsilon _{2}}{\varepsilon _{1}^{1/2}}\left\Vert u-%
\mathcal{I}_{p}u\right\Vert _{0,[1-\kappa p\mu _{1}^{-1},1]}\left\Vert \xi
\right\Vert _{E,I} \\
&\lesssim &\frac{\varepsilon _{2}}{\varepsilon _{1}^{1/2}}\frac{\varepsilon
_{1}^{1/2}}{\varepsilon _{2}^{1/2}}e^{-\beta p}\left\Vert \xi \right\Vert
_{E,I} \\
&\lesssim &e^{-\beta p}\left\Vert \xi \right\Vert _{E,I},
\end{eqnarray*}%
where (\ref{LeqNo}) was used. Therefore,%
\begin{equation*}
\left\vert \varepsilon _{2}\left\langle b\left( u-\mathcal{I}_{p}u\right)
^{\prime },\xi \right\rangle _{I}\right\vert \lesssim e^{-\beta p}\left\Vert
\xi \right\Vert _{E,I}
\end{equation*}%
and%
\begin{equation*}
\left\Vert \xi \right\Vert _{E,I}^{2}\lesssim e^{-\beta p}\left\Vert \xi
\right\Vert _{E,I}
\end{equation*}%
which completes the proof. \end{proof}

We conclude with the main result of the article.

\begin{theorem}
\label{main} Let $u$ be the solution of (\ref{de})--(\ref{bc}) and let $%
u_{FEM}\in S_{0}(\kappa ,p)$ be its approximation based on the Spectral
Boundary Layer Mesh. Then there exist a constant $\sigma >0$, independent of 
$\varepsilon _{1},\varepsilon _{2}$, such that%
\begin{equation*}
\left\Vert u-u_{FEM}\right\Vert _{E,I}\lesssim e^{-\sigma p}.
\end{equation*}
\end{theorem}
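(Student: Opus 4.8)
The plan is a one-line triangle inequality, with the two pieces supplied by Lemma~\ref{lemma_approx} and Lemma~\ref{lemma_approx2}. Write
\begin{equation*}
\left\Vert u-u_{FEM}\right\Vert _{E,I}\leq \left\Vert u-\mathcal{I}_{p}u\right\Vert _{E,I}+\left\Vert \mathcal{I}_{p}u-u_{FEM}\right\Vert _{E,I},
\end{equation*}
where $\mathcal{I}_{p}u$ denotes the piecewise approximant built in the proof of Lemma~\ref{lemma_approx} (the global polynomial in the asymptotic case, and the element-by-element interpolant on $\Delta_{BL}(\kappa,p)$ assembled from the smooth, boundary-layer, and remainder pieces in the pre-asymptotic case). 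The first step is to observe that this $\mathcal{I}_{p}u$ actually lies in $S_{0}(\kappa,p)$: on each element of $\Delta_{BL}(\kappa,p)$ it is a polynomial of degree $\leq p$ by construction, and since the operator of Theorem~\ref{thm_sch} reproduces endpoint values (see \eqref{sch_1}), the elementwise pieces agree at the interior mesh nodes and vanish at $x=0,1$, so $\mathcal{I}_{p}u\in H_{0}^{1}(I)$. Hence both terms on the right are legitimately covered by the two lemmas.

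For the first term, Lemma~\ref{lemma_approx} gives $\left\Vert u-\mathcal{I}_{p}u\right\Vert _{E,I}\lesssim e^{-\sigma_{1}p}$ for some $\sigma_{1}>0$ independent of $\varepsilon_{1},\varepsilon_{2}$. For the second term, Lemma~\ref{lemma_approx2} — whose proof uses coercivity \eqref{coercivity}, Galerkin orthogonality for $\xi:=\mathcal{I}_{p}u-u_{FEM}$, Cauchy--Schwarz on the $\varepsilon_1$- and $c$-terms, and an integration-by-parts plus inverse-inequality argument on the convection term, all reduced back to $\left\Vert u-\mathcal{I}_{p}u\right\Vert_{E,I}$ — gives $\left\Vert \mathcal{I}_{p}u-u_{FEM}\right\Vert _{E,I}\lesssim e^{-\sigma_{2}p}$ for some $\sigma_{2}>0$, again uniform in the perturbation parameters. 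Taking $\sigma=\min\{\sigma_{1},\sigma_{2}\}>0$ and absorbing the (at most polynomial-in-$p$) prefactors that appear in the lemmas into the exponential by a harmless further reduction of $\sigma$, we conclude $\left\Vert u-u_{FEM}\right\Vert _{E,I}\lesssim e^{-\sigma p}$, which is the claim.

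There is essentially no obstacle at this level: all the analytic work — the regularity decomposition of Propositions~\ref{thm_reg1}--\ref{thm_reg3}, the Stirling estimate of Lemma~\ref{stirling}, the choice of $\lambda$-type parameters to produce a geometric ratio $q<1$, and the convection-term handling — has already been done in the preceding lemmas. The only points requiring a word of care are (i) confirming $\mathcal{I}_{p}u\in S_{0}(\kappa,p)$ so that Galerkin orthogonality applies, as noted above, and (ii) checking that the constant $\sigma$ can be chosen uniformly across the asymptotic and pre-asymptotic ranges of $p$ and across the three regimes of Table~1, which follows since each lemma already provides such a uniform constant in every case.
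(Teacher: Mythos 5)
Your proposal is correct and follows essentially the same route as the paper: the triangle inequality splitting $u-u_{FEM}$ into $u-\mathcal{I}_{p}u$ and $\mathcal{I}_{p}u-u_{FEM}$, handled by Lemma~\ref{lemma_approx} and Lemma~\ref{lemma_approx2}, respectively. Your additional remarks on $\mathcal{I}_{p}u\in S_{0}(\kappa,p)$ and on absorbing polynomial prefactors are sensible bookkeeping that the paper leaves implicit, but they do not change the argument.
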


\begin{proof} We begin with the triangle inequality:%
\begin{equation*}
\left\Vert u-u_{FEM}\right\Vert _{E,I}\leq \left\Vert u-\mathcal{I}%
_{p}u\right\Vert _{E,I}+\left\Vert \mathcal{I}_{p}u-u_{FEM}\right\Vert
_{E,I},
\end{equation*}%
where $\mathcal{I}_{p}$ is the approximation operator of Theorem \ref%
{thm_sch}. \ The first term is handled by Lemma \ref{lemma_approx} and the
second by Lemma \ref{lemma_approx2}. \end{proof}


\section{Numerical results}

\label{nr}

In this section we present the results of numerical computations for two
examples, using the values 
\begin{equation}
\varepsilon _{1}=10^{-9},\varepsilon _{2}=10^{-4}\;;\;\varepsilon
_{1}=10^{-10},\varepsilon _{2}=10^{-5}\;;\;\varepsilon
_{1}=10^{-12},\varepsilon _{2}=10^{-12}\;,\;  \label{values}
\end{equation}%
(hence we cover all three regimes).

\vspace{0.5cm}

\textbf{Example 1:} We consider (\ref{de}), (\ref{bc}) with $%
b(x)=c(x)=f(x)=1 $. An exact solution is available, hence our results are
reliable. We take $\kappa =1$ in the definition of the mesh and we use
polynomials of degree $p=1,...,11$ for the approximation. Figure \ref{F1}
shows the percentage relative error measured in the energy norm, versus the
number of degrees of freedom $DOF=3p-1$, in a semi-log scale. The fact that
we see straight lines indicates the exponential convergence of the method,
while the robustness is visible since the method does not deteriorate as the
singular perturbation parameters tend to 0.

\begin{figure}[h]
\begin{center}
\includegraphics[width=0.6\textwidth]{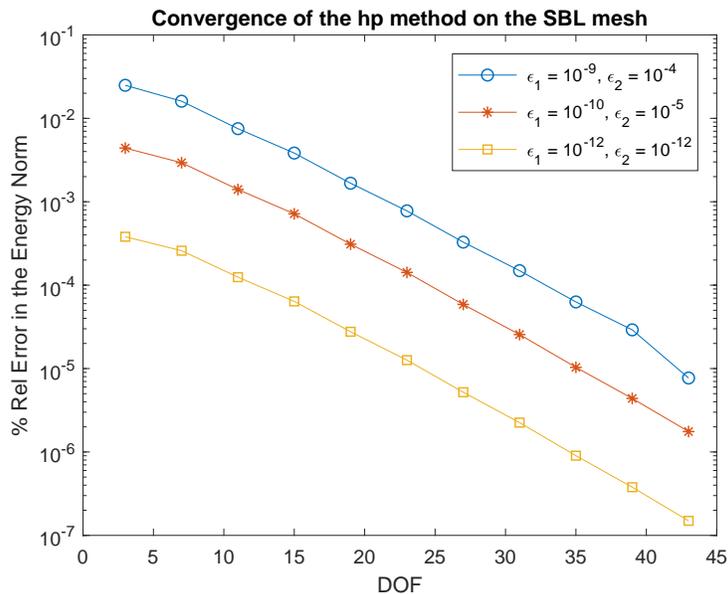}
\end{center}
\caption{Energy norm convergence for Example 1.}
\label{F1}
\end{figure}

In order to get a `clearer' picture of the performance of the method, we
show in Figures \ref{F2}--\ref{F4} the convergence in each regime
separately. In regime 1 ($\varepsilon _{1}<<\varepsilon _{2}^{2}$), we see
from Figure \ref{F2} that the method converges exponentially (we get
straight lines) and independently of $\varepsilon _{1},\varepsilon _{2}$
(the lines coincide). In regime 2 ($\varepsilon _{1}\approx \varepsilon
_{2}^{2}$), however, the lines do not coincide, even though we have
exponential convergence. This is due to the fact that the energy norm is not 
\emph{balanced} for reaction-diffusion problems (see, e.g. \cite%
{roos-franz11}, \cite{roos-schopf11}) and this manifests itself as the
method performing better as $\varepsilon _{1},\varepsilon _{2}\rightarrow 0$%
. The same is true in regime 3 ($\varepsilon _{1}>>\varepsilon _{2}^{2}$),
as seen in Figure \ref{F4}, since in this regime we again have a
reaction-diffusion problem.

\begin{figure}[h]
\begin{center}
\includegraphics[width=0.6\textwidth]{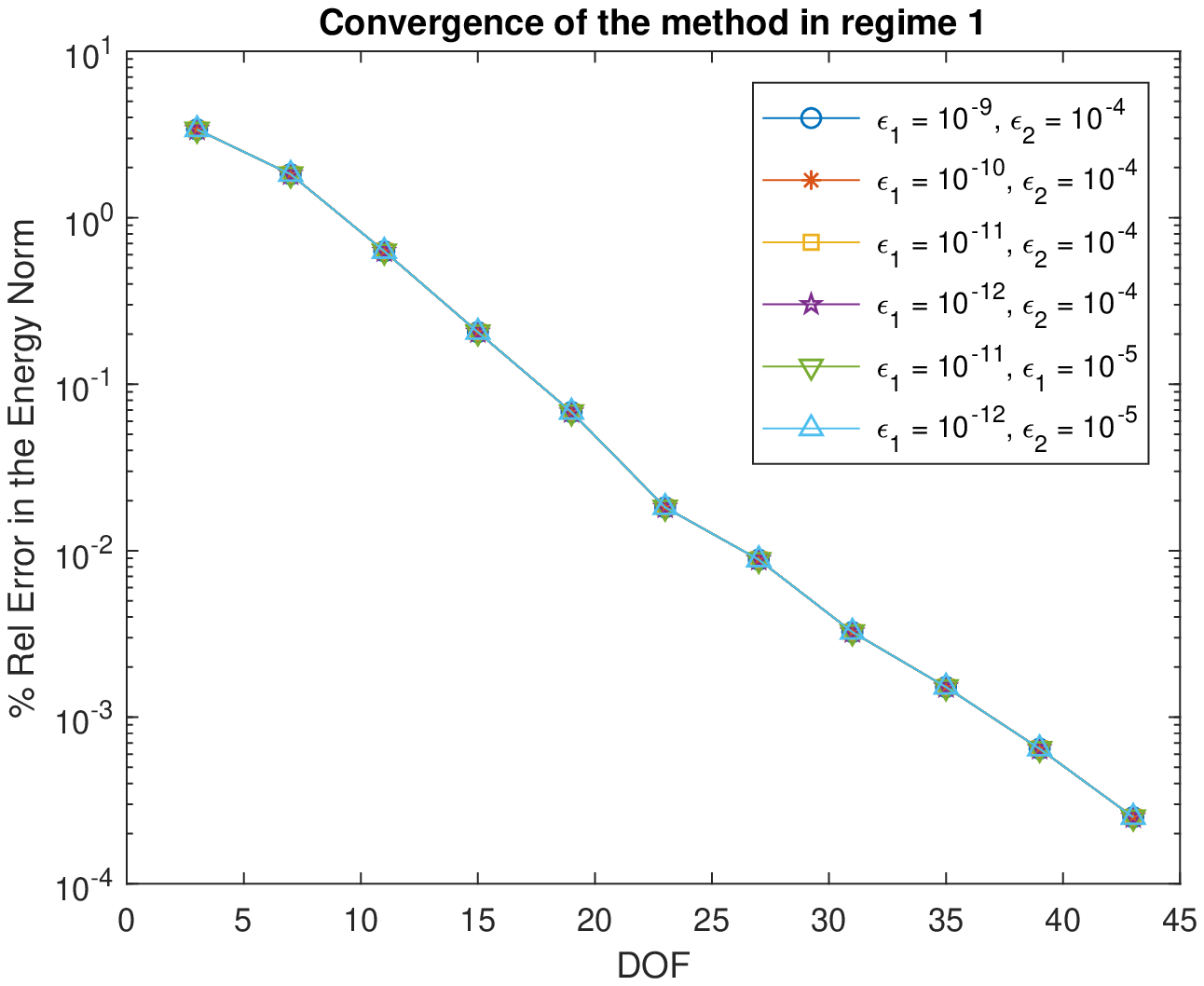}
\end{center}
\caption{Energy norm convergence for Example 1, when $\protect\varepsilon %
_{1}<<\protect\varepsilon _{2}^{2}$.}
\label{F2}
\end{figure}

\begin{figure}[h]
\begin{center}
\includegraphics[width=0.6\textwidth]{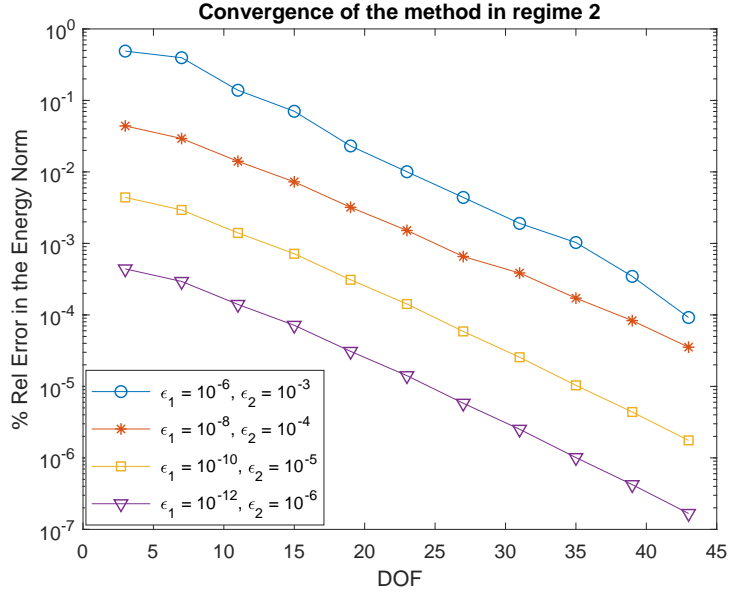}
\end{center}
\caption{Energy norm convergence for Example 1, when $\protect\varepsilon %
_{1}\approx \protect\varepsilon _{2}^{2}$.}
\label{F3}
\end{figure}

\begin{figure}[h]
\begin{center}
\includegraphics[width=0.6\textwidth]{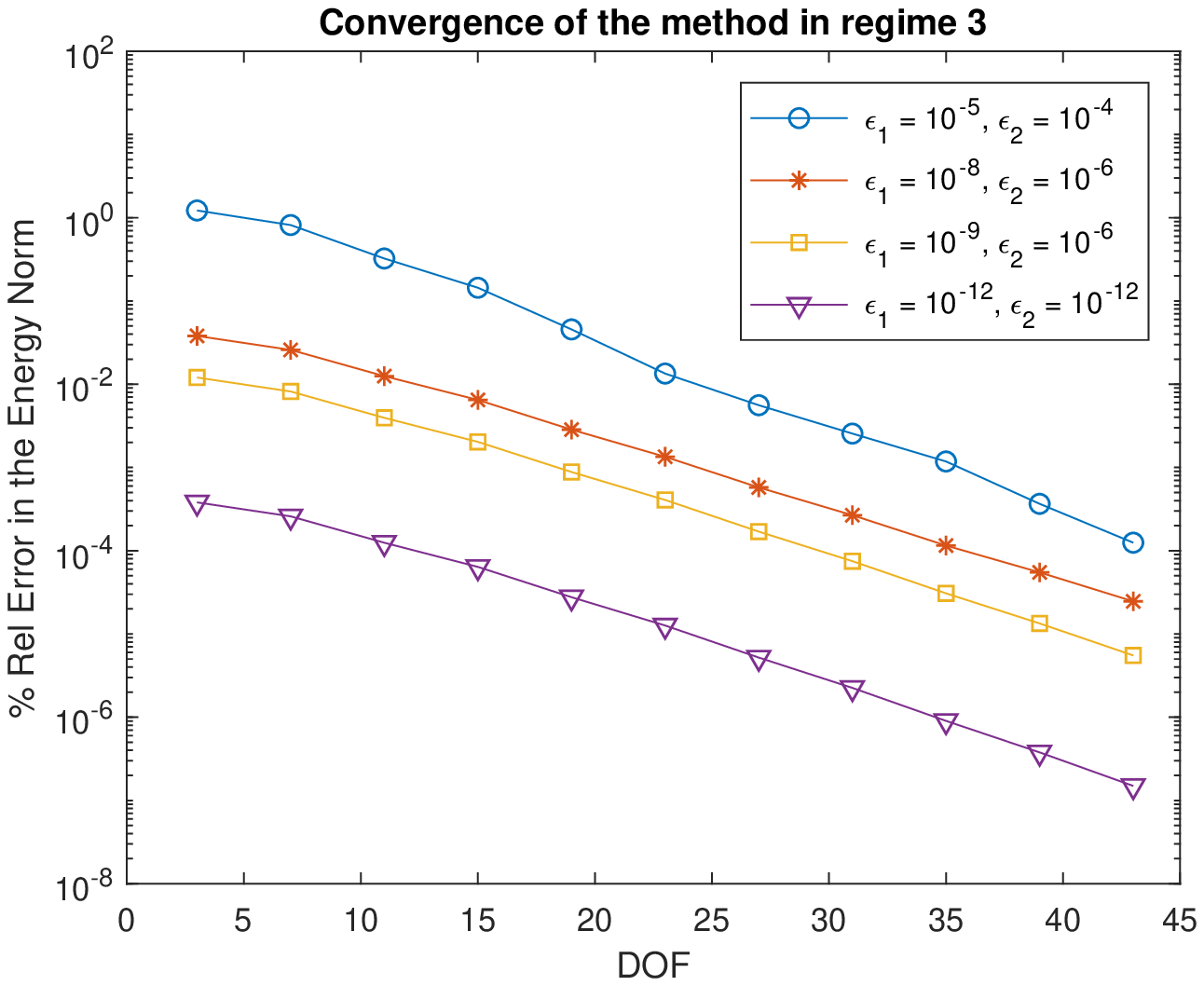}
\end{center}
\caption{Energy norm convergence for Example 1, when $\protect\varepsilon %
_{1}>>\protect\varepsilon _{2}^{2}$.}
\label{F4}
\end{figure}

\vspace{0.5cm}

\textbf{Example 2: }We now consider (\ref{de}), (\ref{bc}) with $%
b(x)=e^{x},c(x)=x,f(x)=1$. An exact solution is not available, so we use a
reference solution obtained with twice as many DOF. In Figure \ref{F5} we
show the convergence of the method for the values of $\varepsilon
_{1},\varepsilon _{2}$ given by (\ref{values}).

Once again we observe robust exponential convergence as $DOF$ is increased.

\begin{figure}[h]
\begin{center}
\includegraphics[width=0.6\textwidth]{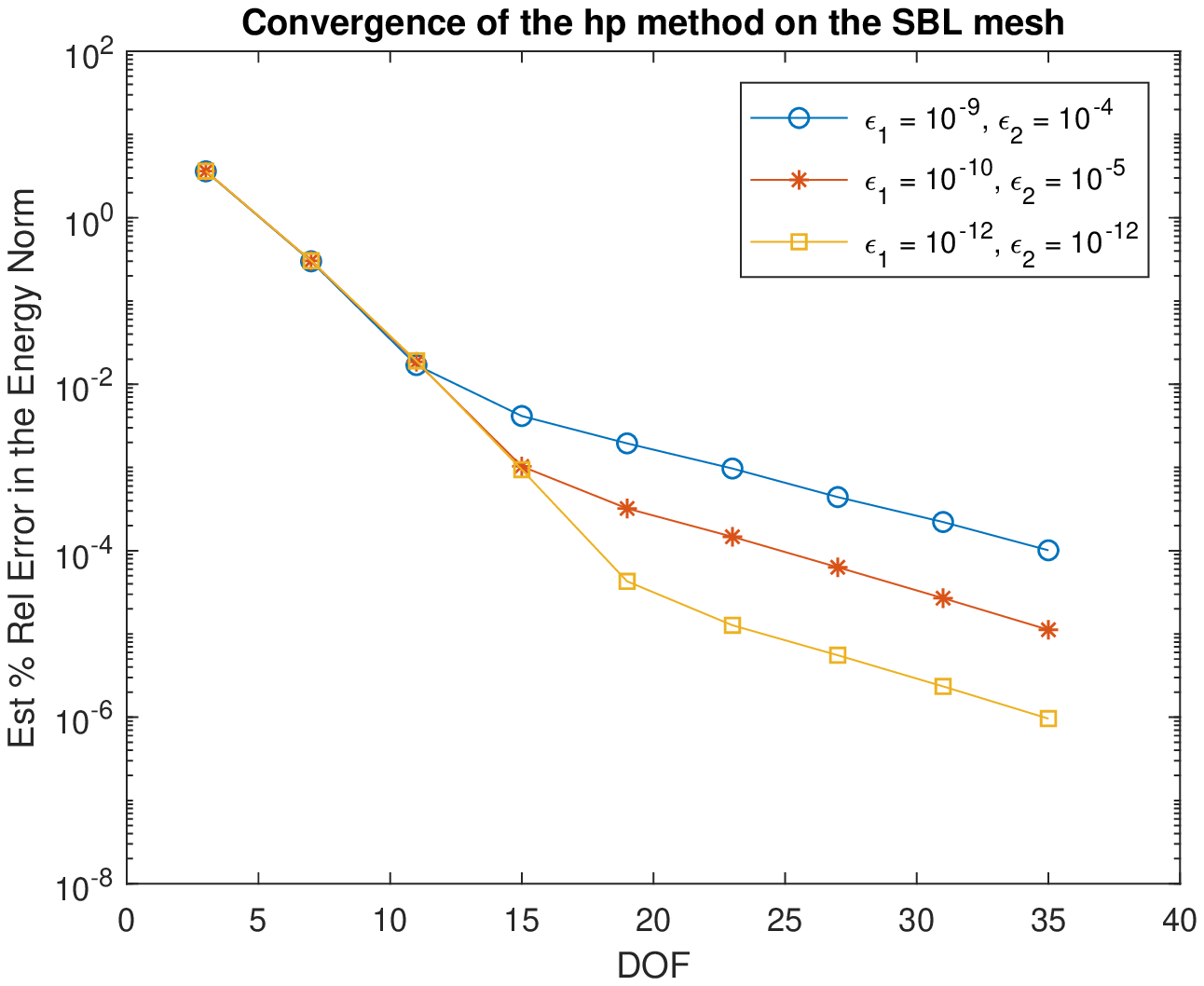}
\end{center}
\caption{Energy norm convergence for Example 2.}
\label{F5}
\end{figure}



\begin{thebibliography}{99}
\bibitem{AAR} G. E. Andrews, R. Askey and R. Roy, \emph{Special functions},
Cambridge Unversity Press, Cambridge (1999).

\bibitem{B} N. S. Bakhvalov, \emph{Towards optimization of methods for
solving boundary value problems in the presence of boundary layers} (in
Russian), Zh. Vychisl. Mat. \ Mat. Fiz., \textbf{9} (1969) 841--859.

\bibitem{BZ} M. Brdar and H. Zarin, \emph{A singularly perturbed problem
with two parameters on a Bakhvalov-type mesh}, J. Comp. Appl. Math., \textbf{%
292} (2016) 307--319.

\bibitem{GORP} J. L. Garcia, E. O'Riordan and M. L. Pickett, \emph{A
parameter robust high order numerical method for a singularly perturbed two
parameter problem}, Appl. Num. Math., \textbf{56} (2006) 962--980.

\bibitem{L} T. Lin\ss , \emph{Layer-adapted meshes for
reaction-convection-diffusion problems}, Lecture Notes in Mathematics 1985,
Springer-Verlag, 2010.

\bibitem{LR} T. Lin\ss\ and H. G. Roos, \emph{Analysis of a finite
difference scheme for a singularly perturbed problem with two small
parameters}, Comput. Meth. Appl. Math., \textbf{289} (2004) 355--366.

\bibitem{melenk} J. M. Melenk, \emph{hp Finite Element Methods for Singular
Perturbations}, Lecture Notes in Mathematics 1796, Springer-Verlag, 2002.

\bibitem{melenk97} J. M. Melenk, \emph{On the robust exponential convergence
of hp finite element methods for problems with boundary layers}, IMA J. Num.
Anal., \textbf{17} (1997) 577--601.

\bibitem{ms} J. M. Melenk and C. Schwab, \emph{An hp Finite Element Method
for convection-diffusion problems in one-dimension}, IMA J. Num. Anal., 
\textbf{19} (1999) 425--453.

\bibitem{MXO} J. M. Melenk, C. Xenophontos and L. Oberbroeckling, \emph{%
Robust exponential convergence of hp-FEM for singularly perturbed systems of
reaction-diffusion equations with multiple scales}, IMA J. Num. Anal., 
\textbf{33} (2013) 609--628.

\bibitem{MXO1} J. M. Melenk, C. Xenophontos and L. Oberbroeckling, \emph{%
Analytic regularity for singularly perturbed systems of reaction-diffusion
equations with multiple scales}, Adv. Comp. Math., \textbf{39} (2013)
367--394.

\bibitem{MXO2} J. M. Melenk, C. Xenophontos and L. Oberbroeckling, \emph{%
Analytic regularity for singularly perturbed systems of reaction-diffusion
equations with multiple scales: proofs}, arXiv:1108.2002v2 (2012).

\bibitem{omalley} R. E. O'Malley, \emph{Singular Perturbation Methods for
Ordinary Differential Equations}, Springer-Verlag (1991).

\bibitem{mos} J. J. H. Miller, E. O'Riordan and G. I. Shishkin, \emph{Fitted
Numerical Methods for Singular Perturbation Problems}, World Scientific,
1996.

\bibitem{morton} K. W. Morton, \emph{Numerical Solution of
Convection-Diffusion Problems}, Volume 12 of Applied Mathematics and
Mathematical Computation, Chapman \& Hall, 1996.

\bibitem{ORPS} E. O'Riordan, M. L. Pickett and G. I. Shishkin, \emph{%
Singularly perturbed problems modelling reaction-convection-diffusion
processes}, Comput. Meth. Appl. Math., \textbf{3} (2003) 424--442.

\bibitem{roos-franz11} H. G. Roos and S. Franz, \emph{Error estimation in a
balanced norm for a convection-diffusion problems with two different
boundary layers}, Calcolo, \textbf{51} (2014) 423--440.

\bibitem{roos-schopf11} H. G. Roos and M. Schopf, \emph{Convergence and
stability in balanced norms of finite element methods on Shishkin meshes for
reaction-diffusion problems}, ZAMM, \textbf{95} (2015) 551--565.

\bibitem{rst} H.-G{.} Roos, M. Stynes, and L. Tobiska. 
\newblock {\em Robust numerical methods for singularly perturbed differential
  equations}, volume~24 of \emph{Springer Series in Computational Mathematics%
}. \newblock Springer-Verlag, Berlin, second edition, 2008. \newblock %
Convection-diffusion-reaction and flow problems.

\bibitem{RU} H.-G. Roos and Z. Uzelac, \emph{The SDFEM\ for a
Convection-Diffusion Problem with Two Small Parameters}, Comput. Methods
Appl. Math., \textbf{3} (2003) 443--458.

\bibitem{schwab} C. Schwab,\emph{\ }$p$\emph{- and }$hp$\emph{-Finite
Element Methods}, Oxford Science Publications, 1998.

\bibitem{ss} C. Schwab and M. Suri, \emph{The p and hp versions of the
finite element method for problems with boundary layers}, Math. Comp., 
\textbf{65} (1996) 1403--1429.

\bibitem{Shishkin2} G. I. Shishkin, \emph{Grid approximation of singularly
perturbed boundary value problems with a regular boundary layer}, Sov. J.
Numer. Anal. Math. Model. \textbf{4} (1989) 397--417.

\bibitem{Irene} I. Sykopetritou, \emph{An }$hp$ \emph{finite element method
for a second order singularly perturbed boundary value problem with two
small parameters}, M.Sc. Thesis, Department of Mathematics \&\ Statistics,
University of Cyprus (2018).

\bibitem{IX} I. Sykopetritou and C. Xenophontos, \emph{Analytic regularity
for a singularly perturbed reaction-convection-diffusion boundary value
problem with two small parameters}, http://arxiv.org/abs/1901.09397,
to appear in Meditterenean Journal of Mathematics (2020).

\bibitem{TR} Lj. Teofanov and H.-G. Roos, \emph{An elliptic singularly
perturbed problem with two parameters I: Solution decompostition}, J.
Comput. Appl. Math., \textbf{206} (2007) 1802--1097.

\bibitem{TR2} Lj. Teofanov and H.-G. Roos, \emph{An elliptic singularly
perturbed problem with two parameters II: Robust finite element solution},
J. Comput. Appl. Math., \textbf{212} (2008) 374--389.

\bibitem{XI} C. Xenophontos and I. Sykopetritou, \emph{Isogeometric analysis
for singularly perturbed problems: error estimates}, http://arxiv.org/abs/1901.01949,
 to appear in ETNA (2020).
\end{thebibliography}
\end{document}